\DeclareMathAlphabet{\mathpzc}{OT1}{pzc}{m}{it}
\DeclareMathAlphabet\euscr{T1}{qzc}{m}{n}
\newtheorem{theorem}{Theorem}[section]
\newtheorem*{theorem*}{Theorem}
\newtheorem{proposition}[theorem]{Proposition}
\newtheorem{lemma}[theorem]{Lemma}
\theoremstyle{definition}
\theoremstyle{remark}
\newtheorem{remark}[theorem]{Remark}
\newtheorem*{remark*}{Remark}
\newcommand{\con}[1]{\mathbb{#1}}
\newcommand{\R}{\con{R}} %Real
\newcommand{\Z}{\con{Z}} %Integer
\newcommand{\Sph}{\con{S}} %Sphere
\renewcommand{\H}{\con{H}}
\newcommand{\ccal}{\mathscr{C}}
\newcommand{\ecal}{\mathcal{E}}
\newcommand{\ical}{\mathcal{I}}
\newcommand{\lcal}{\mathcal{L}}
\newcommand{\ocal}{\mathcal{O}}
\newcommand{\leqnomode}{\tagsleft@true\let\veqno\@@leqno}
\newcommand{\reqnomode}{\tagsleft@false\let\veqno\@@eqno}
\newcommand{\norm}[1]{\left | \left |{#1} \right | \right |}
\newcommand{\seminorm}[1]{\left [ {#1} \right ] }
\newcommand{\s}{\gamma}
\newcommand{\fraclaplacian}{(-\Delta)^\s}
\newcommand{\Lip}{\mathrm{Lip}}
\renewcommand{\d}{\,\mathrm{d}} %straight d with small space before
\newcommand{\bpar}[1]{\left ( {#1}\right )}
\newcommand{\setcond}[2]{\left \{ #1 \ : \ #2  \right \}}
\newcommand\evalat[1]{_{\mkern1.5mu\big\vert_{\scriptstyle #1}}}
\newcommand{\average}{\fint}
\newcommand\beqc[1]{\left\{\begin{array}{#1}}
\newcommand\eeqc{\end{array} \right.}
\def\PDEsystem{rcll}
\def\bmatrix{\begin{pmatrix}}
\def\ematrix{\end{pmatrix}}
\DeclareMathOperator{\dist}{dist}
\DeclareMathOperator{\sign}{sign}
\numberwithin{equation}{section}
\title[Semilinear integro-differential equations I]{Semilinear integro-differential equations, I: odd solutions with respect to the Simons cone}
\author{Juan-Carlos Felipe-Navarro}
\address{J.C. Felipe-Navarro:
Universitat Polit\`ecnica de Catalunya and BGSMath, Departament de Matem\`{a}tiques, Diagonal 647, 08028 Barcelona, Spain}
\email{juan.carlos.felipe@upc.edu}
\author{Tomás Sanz-Perela}
\address{T. Sanz-Perela:
Universitat Polit\`ecnica de Catalunya and BGSMath, Departament de Matem\`{a}tiques, Diagonal 647, 08028 Barcelona, Spain}
\email{tomas.sanz@upc.edu}
\thanks{Both authors acknowledge financial support from the Spanish Ministry of Economy and Competitiveness (MINECO), through the María de Maeztu Programme for Units of Excellence in R\&D (MDM-2014-0445-16-4 and MDM-2014-0445, respectively), are supported by MINECO grants MTM2014-52402-C3-1-P and MTM2017-84214-C2-1-P, are members of the Barcelona Graduate School of Mathematics (BGSMath), and are part of the Catalan research group 2017 SGR 01392.}
\keywords{Integro-differential semilinear equation, odd symmetry, Simons cone, maximum principle for odd functions, energy estimate, saddle-shaped solution}
\begin{document}

%%%%%%%%%%%%%%%%%%%%%%%%%%%%%%%%%%%%%%%%%%%%%%%%%%%%%%%%%%%%%%%%%%%%%%%%%%%%%%%
%%%%%%%%%%%%%%%%%%%%%%%%%%%%%%%%%%%%%%%%%%%%%%%%%%%%%%%%%%%%%%%%%%%%%%%%%%%%%%%
\begin{abstract}
This is the first of two papers concerning saddle-shaped solutions to the semilinear equation $L_K u = f(u)$ in $\R^{2m}$, where $L_K$ is a linear elliptic integro-differential operator and $f$ is of Allen-Cahn type. 

Saddle-shaped solutions are doubly radial, odd with respect to the Simons cone $\{(x', x'') \in \R^m \times \R^m \, : \, |x'| = |x''|\}$, and vanish only on this set. By the odd symmetry, $L_K$ coincides with a new operator $L_K^\ocal$ which acts on functions defined only on one side of the Simons cone, $\{|x'|>|x''|\}$, and that vanish on it. This operator $L_K^\ocal$, which corresponds to reflect a function oddly and then apply $L_K$, has a kernel on $\{|x'|>|x''|\}$ which is different from $K$. 

In this first paper, we characterize the kernels $K$ for which the new kernel is positive and therefore one can develop a theory on the saddle-shaped solution. The necessary and sufficient condition for this turns out to be that $K$ is radially symmetric and $\tau\mapsto K(\sqrt \tau)$ is a strictly convex function. 

Assuming this, we prove an energy estimate for doubly radial odd minimizers and the existence of saddle-shaped solution. In a subsequent article, part II, further qualitative properties of saddle-shaped solutions will be established, such as their asymptotic behavior, a maximum principle for the linearized operator, and their uniqueness.
\end{abstract}
%%%%%%%%%%%%%%%%%%%%%%%%%%%%%%%%%%%%%%%%%%%%%%%%%%%%%%%%%%%%%%%%%%%%%%%%%%%%%%%
%%%%%%%%%%%%%%%%%%%%%%%%%%%%%%%%%%%%%%%%%%%%%%%%%%%%%%%%%%%%%%%%%%%%%%%%%%%%%%%

\maketitle

\tableofcontents

%%%%%%%%%%%%%%%%%%%%%%%%%%%%%%%%%%%%%%%%%%%%%%%%%%%%%%%%%%%%%%%%%%%%%%%%%%%%%%%
%%%%%%%%%%%%%%%%%%%%%%%%%%%%%%%%%%%%%%%%%%%%%%%%%%%%%%%%%%%%%%%%%%%%%%%%%%%%%%%
\section{Introduction}
%%%%%%%%%%%%%%%%%%%%%%%%%%%%%%%%%%%%%%%%%%%%%%%%%%%%%%%%%%%%%%%%%%%%%%%%%%%%%%%
%%%%%%%%%%%%%%%%%%%%%%%%%%%%%%%%%%%%%%%%%%%%%%%%%%%%%%%%%%%%%%%%%%%%%%%%%%%%%%%
\label{Sec:Introduction}

In this paper we study solutions to the semilinear integro-differential equation
\begin{equation}
\label{Eq:NonlocalAllenCahn}
L_K u = f(u) \quad \textrm{ in } \R^{2m}
\end{equation}
which are odd with respect to the Simons cone --- defined in \eqref{Eq:SimonsCone}. The interest on these solutions, often called saddle-shaped solutions, is motivated by the nonlocal version of a conjecture by De Giorgi on the Allen-Cahn equation (see details below) with the aim of finding a counterexample in high dimensions. Moreover, this problem is related to the regularity theory of nonlocal minimal surfaces.

There are only three papers in the literature concerning saddle-shaped solutions to \eqref{Eq:NonlocalAllenCahn} with $L_K$ being the fractional Laplacian: \cite{Cinti-Saddle, Cinti-Saddle2} by Cinti and \cite{Felipe-Sanz-Perela:SaddleFractional} by the authors. In all of them the main tool is the extension problem. This paper, together with its second part  \cite{FelipeSanz-Perela:IntegroDifferentialII}, is the first one to study \eqref{Eq:NonlocalAllenCahn} without the extension. For this reason our arguments are purely nonlocal and hold for a more general class of kernels.

Equation \eqref{Eq:NonlocalAllenCahn} is driven by an integro-differential operator $L_K$ of the form
\begin{equation}
\label{Eq:DefOfLu}
L_Ku(x) = \int_{\R^n} \{u(x) - u(y)\} K(x-y)\d y,
\end{equation}
where the kernel $K$ satisfies
\begin{equation}
\label{Eq:Symmetry&IntegrabilityOfK}
K\geq 0\,, \quad K(z) = K(-z) \quad \textrm{ and } \quad \int_{\R^n} \min \left\{ |z|^2, 1 \right\} K(z) \d z < + \infty\,.
\end{equation}
The integral in \eqref{Eq:DefOfLu} has to be understood in the principal value sense. The most canonical example of such operators is the fractional Laplacian, defined for $\s\in(0,1)$ as
$$
\fraclaplacian u = c_{n, \s} \int_{\R^n} \dfrac{u(x) - u(y)}{|x-y|^{n + 2\s}}\d y\,,
$$
where $c_{n, \s}$ is a normalizing constant.

Recall that the fractional Laplacian has an associated extension problem (see \cite{CaffarelliSilvestre}) that allows the use of local arguments to deal with equations such as \eqref{Eq:NonlocalAllenCahn}. This is not the case for general operators $L_K$, and therefore some purely nonlocal techniques are developed along this work. 

Throughout the paper, we assume that $L_K$ is uniformly elliptic, that is,
\begin{equation}
\label{Eq:Ellipticity}
\lambda \dfrac{c_{n,\s}}{|z|^{n+2\s}} \leq K(z) \leq \Lambda \dfrac{c_{n,\s}}{|z|^{n+2\s}}\,, 
\end{equation}
where $\lambda$ and $\Lambda$ are two positive constants. This condition is frequently adopted since it yields Hölder regularity of solutions (see \cite{RosOton-Survey,SerraC2s+alphaRegularity}). The family of linear operators satisfying conditions \eqref{Eq:Symmetry&IntegrabilityOfK} and \eqref{Eq:Ellipticity} is the so-called $\lcal_0(n,\s,\lambda, \Lambda)$ ellipticity class. For short we will usually write $\lcal_0$ and we will make explicit the parameters only when needed. 

Moreover, for many  purposes we will need the operators to be invariant under rotations. This is equivalent to saying that the kernel is radially symmetric, $K(z) = K(|z|)$.

The Simons cone will be a central object along this paper. It is defined in $\R^{2m}$ by
\begin{equation}
\label{Eq:SimonsCone}
\mathscr{C} := \setcond{x = (x', x'') \in \R^m \times \R^m=\R^{2m}}{|x'| = |x''|}.
\end{equation}
This cone is of special importance in the theory of local and nonlocal minimal surfaces, and its variational properties are related to the conjecture of De Giorgi (see the end of this introduction for more details). Through the whole article we will use $\ocal$ and $\ical$ to denote each of the parts in which $\R^{2m}$ is divided by the cone $\ccal$:
$$
\ocal:= \setcond{x = (x', x'') \in \R^{2m}}{|x'| > |x''|} \textrm{ and } \,
\ical:= \setcond{x = (x', x'') \in \R^{2m}}{|x'| < |x''|}\!.
$$

Both $\ocal$ and $\ical$ belong to a family of sets in $\R^{2m}$ which are called of \emph{double revolution}. These are sets that are invariant under orthogonal transformations in the first $m$ variables, as well as under orthogonal transformations in the last $m$ variables. That is, $\Omega\subset \R^{2m}$ is a set of double revolution if $R\Omega = \Omega$ for every given transformation $R\in O(m)^2 = O(m) \times O(m)$, where  $O(m)$ is the orthogonal group of $\R^m$.

In this paper we deal with functions that are \emph{doubly radial}. These are functions $w:\R^{2m}  \to \R$ that only depend on the modulus of the first $m$ variables and on the modulus of the last $m$ ones, i.e., $w(x) = w(|x'|,|x''|)$. Equivalently, $w(Rx) = w(x)$ for every $R \in O(m)^2$.

In order to define oddness and evenness of functions with respect to the Simons cone, we consider the following isometry, which will play a significant role in this article:
\begin{equation}
\label{Eq:DefStar}
\begin{matrix}
(\cdot)^\star \colon & \R^{2m}= \R^{m}\times \R^{m}  &\to&  \R^{2m}= \R^{m}\times \R^{m}  \\
& x = (x',x'') &\mapsto & x^\star = (x'',x')\,.
\end{matrix}
\end{equation}
Note that this isometry is actually an involution that maps $\ocal$ into $\ical$ (and vice versa) and leaves the cone $\ccal$ invariant ---although not all points in $\ccal$ are fixed points of $(\cdot)^\star$. Taking into account this transformation, we say that a doubly radial function $w$ is \emph{odd with respect to the Simons cone} if $w(x) = -w(x^\star)$. Similarly, we say that a doubly radial function $w$ is \emph{even with respect to the Simons cone} if $w(x) = w(x^\star)$.

Regarding the doubly radial symmetry we define the following variables
$$
s := |x'| \quad \text{ and } \quad t:=|x''|\,.
$$
They are specially useful when dealing with the Laplacian in these coordinates, since
\begin{equation}
\label{Eq:Laplacian-st}
\Delta w = w_{ss} + w_{tt} + \frac{m-1}{s}w_s + \frac{m-1}{t}w_t
\end{equation}
becomes an expression suitable to work with. A similar formula appears in the case of the fractional Laplacian thanks to the local extension problem. Having a PDE in the two variables $(s,t)\in \R^2$ is useful to perform certain computations (see \cite{CabreTerraI, CabreTerraII,Cabre-Saddle, CabreRosOton-DoubleRev} for the local case and \cite{Cinti-Saddle, Cinti-Saddle2, Felipe-Sanz-Perela:SaddleFractional} for the fractional framework).

If we try to follow the same strategy by writing a rotation invariant operator $L_K$ in $(s,t)$ variables, the expression of the new operator is quite complex. Indeed, if $w:\R^{2m} \to \R$ is doubly radial and we define $\widetilde{w}(s,t) := w(s,0,...,0,t,0,...,0)$, it holds
$$ L_Kw(x) = \widetilde{L}_K \widetilde{w} (|x'|,|x''|)$$
with
\begin{equation}
\label{Eq:L_K-st}
\widetilde{L}_K \widetilde{w} (s,t) := \int_0^{+\infty}  \int_0^{+\infty} \sigma^{m-1} \tau^{m-1} \big(\widetilde{w}(s,t) - \widetilde{w}(\sigma, \tau)\big) J(s,t,\sigma, \tau)  \d \sigma\d \tau
\end{equation}
and
\begin{align*}
J(s,t,\sigma, \tau) &:= \int_{\Sph^{m-1}}  \int_{\Sph^{m-1}} K\Big( \sqrt{s^2+\sigma^2- 2 s \sigma \omega_1 + t^2 + \tau^2 - 2t \tau\tilde\omega_1}\Big) \d \omega \d \tilde\omega\,.
\end{align*}

Note that $\widetilde{L}_K$ is an integro-differential operator in $(0,+\infty)\times(0,+\infty)$, but the expression of its kernel is quite involved. Indeed, such an expression does not become simpler even when $L_K$ is the fractional Laplacian. In this case, the kernel $J$ involves hypergeometric functions of two variables, the so-called Appell functions (see Appendix~\ref{Sec:stcomputations} for more details on it), but this does not simplify computations.

Instead of working with the $(s,t)$ variables, we follow another approach that we find more clear and concise. It consists on rewriting the operator $L_K$ with a different kernel $\overline{K} : \R^{2m}\times \R^{2m} \to \R$ that is doubly radial with respect to its both arguments, but in such a way that it still acts on functions defined in $\R^{2m}$ ---and not in $(0,+\infty)^2$. As it is explained in detail in Section~\ref{Sec:OperatorOddF}, if $K$ is a radially symmetric kernel, then we can write $L_K$ acting on a doubly radial function $w$ as
\begin{equation}
\label{Eq:L_KWithKbar}
L_K w(x) = \int_{\R^{2m}} \{w(x) - w(y)\} \overline{K}(x,y) \d y\,,
\end{equation}
where $\overline{K} : \R^{2m}\times \R^{2m} \to \R$ is doubly radial in both arguments and is defined by
\begin{equation}
\label{Eq:KbarDef'}
\overline{K}(x,y) := \average_{O(m)^2} K(|Rx - y|)\d R\,.
\end{equation}
Here, $\d R$ denotes integration with respect to the Haar measure on $O(m)^2$ (see Section~\ref{Sec:OperatorOddF} for the details).

This new expression \eqref{Eq:L_KWithKbar} has some advantages compared with \eqref{Eq:L_K-st}. First, the computations in this new setting are shorter and more transparent than the analogous ones using $(s,t)$ variables. This also makes the notation more concise. Furthermore we avoid some issues of the $(s,t)$ variables such as the special treatment of the set $\{st=0\}$. Although in this paper we do not work in $(s,t)$ variables, we include an appendix at the end of the article with some computations using them (see Appendix~\ref{Sec:stcomputations}). We think that this could be useful in future works.

Once we have rewritten $L_K$ with a doubly radial kernel $\overline{K}$, as in \eqref{Eq:L_KWithKbar}, we shall find a suitable expression of the operator when acting on odd functions with respect to the Simons cone. Note that such functions are defined by their values in $\ocal$ and therefore we want to rewrite $L_K$ taking this into account. To this purpose, we define the new operator
\begin{equation}
\label{Eq:OperatorOddF}
\begin{split}
L_K^\ocal w (x)  &:= \int_{\ocal} \{w(x) - w(y) \} \overline{K}(x, y) \d y +  \int_{\ocal} \{w(x) + w(y) \} \overline{K}(x, y^\star) \d y \\
&= \int_{\ocal} \{w(x) - w(y) \} \{\overline{K}(x, y) - \overline{K}(x, y^\star)  \} \d y +  2 w(x) \int_{\ocal} \overline{K}(x, y^\star) \d y \,,
\end{split}
\end{equation}
where $(\cdot)^\star$ is defined in \eqref{Eq:DefStar}. As we show in Section~\ref{Sec:OperatorOddF}, $L_K^\ocal$ acting on a doubly radial function $w:\ocal \to \R$ coincides with $L_K$ acting on the odd extension of $w$ with respect to the Simons cone.

Our first main result concerns necessary and sufficient conditions on the original kernel $K$ for this operator to have a positive kernel.  As we will stress through this paper, and also in the forthcoming work \cite{FelipeSanz-Perela:IntegroDifferentialII}, the positivity of the kernel in \eqref{Eq:OperatorOddF} is crucial in order to develop a theory on the saddle-shaped solution. In particular, under this assumption a maximum principle for doubly radial odd functions will hold (see Proposition~\ref{Prop:MaximumPrincipleForOddFunctions} below).

\begin{theorem}
	\label{Th:SufficientNecessaryConditions}
	Let $K:(0,+\infty) \to (0,+\infty)$ and consider the radially symmetric kernel $K(|x-y|)$ in $\R^{2m}$. Define $\overline{K} : \R^{2m}\times \R^{2m} \to \R$ by \eqref{Eq:KbarDef'}.
	
	If 
	\begin{equation}
	\label{Eq:SqrtConvex}	
	K(\sqrt{\tau}) \text{ is a strictly convex function of }\tau\,,
	\end{equation}
	then $L_K$ has a positive kernel in $\ocal$ when acting on doubly radial functions which are odd with respect to the Simons cone $\ccal$. More precisely, it holds
	\begin{equation}
	\label{Eq:KernelInequality}
	\overline{K}(x,y) > \overline{K}(x, y^\star) \quad \text{ for every }x,y \in \ocal\,.
	\end{equation}
	
	In addition, if $K\in C^2((0,+\infty))$, then \eqref{Eq:SqrtConvex} is not only a sufficient condition for \eqref{Eq:KernelInequality} to hold, but also a necessary one.
\end{theorem}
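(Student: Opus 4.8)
\emph{Step 0: reduction to a one-dimensional comparison.} The plan is to turn \eqref{Eq:KernelInequality} into a statement about two probability measures on the line. Since $\overline K$ is doubly radial in each argument, $\overline K(x,y)$ depends on $x,y\in\ocal$ only through $s:=|x'|$, $t:=|x''|$, $\sigma:=|y'|$, $\tau:=|y''|$, which satisfy $s>t\ge0$ and $\sigma>\tau\ge0$; by \eqref{Eq:DefStar}, $y^\star$ has radial coordinates $(\tau,\sigma)$. Evaluating \eqref{Eq:KbarDef'} at $x=(se_1,te_1)$, $y=(\sigma e_1,\tau e_1)$ and using that $\langle Re_1,e_1\rangle$, for $R$ Haar-distributed on $O(m)$, has the law $\rho$ of the first coordinate of a uniform point of $\Sph^{m-1}$ --- a symmetric, non-degenerate probability measure on $[-1,1]$ --- one obtains
\begin{equation*}
\overline K(x,y)=\int_{-1}^{1}\!\!\int_{-1}^{1} g\bigl(N-2s\sigma\,u-2t\tau\,v\bigr)\,\d\rho(u)\,\d\rho(v),\qquad g:=K(\sqrt{\,\cdot\,}),\quad N:=s^2+t^2+\sigma^2+\tau^2 ,
\end{equation*}
and $\overline K(x,y^\star)$ is the same integral with $\sigma$ and $\tau$ interchanged (this is the kernel $J$ of \eqref{Eq:L_K-st} written in probabilistic form). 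Thus, for $u,v$ independent with law $\rho$ and $X:=s\sigma\,u+t\tau\,v$, $Y:=s\tau\,u+t\sigma\,v$ (both of mean zero), inequality \eqref{Eq:KernelInequality} is equivalent to $\mathbb{E}\,g(N-2X)>\mathbb{E}\,g(N-2Y)$.

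\emph{Step 1: sufficiency.} I would prove that $Y$ is dominated by $X$ in the convex order, so that $\mathbb{E}\,g(N-2X)\ge\mathbb{E}\,g(N-2Y)$ for every convex $g$ (the two laws have the same mean $N$). The mechanism is purely algebraic: on $\ocal\times\ocal$,
\begin{gather*}
(s\sigma)(t\tau)=(s\tau)(t\sigma),\qquad s\sigma+t\tau-(s\tau+t\sigma)=(s-t)(\sigma-\tau)>0,\\
s\sigma\ge\max\{s\tau,t\sigma\}\ge\min\{s\tau,t\sigma\}\ge t\tau .
\end{gather*}
Write $\max\{s\tau,t\sigma\}=\lambda\,s\sigma+(1-\lambda)\,t\tau$ with $\lambda\in(0,1)$; the product identity and $2ab\le a^2+b^2$ give $\min\{s\tau,t\sigma\}\le(1-\lambda)\,s\sigma+\lambda\,t\tau=:b''$. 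Since $u,v$ are i.i.d.\ and symmetric, $Y$ has the same law as $\max\{s\tau,t\sigma\}\,u+\min\{s\tau,t\sigma\}\,v$. Enlarging the $v$-coefficient from $\min\{s\tau,t\sigma\}$ to $b''$ only increases the convex-order size (a contraction of the centred part $v$, strict because $\min\{s\tau,t\sigma\}<b''$ and $\rho$ is non-degenerate); and $(\lambda\,s\sigma+(1-\lambda)\,t\tau)\,u+b''\,v=\lambda\,(s\sigma\,u+t\tau\,v)+(1-\lambda)\,(t\tau\,u+s\sigma\,v)$ is a convex combination of two variables each having the law of $X$. Applying convexity of $w\mapsto g(N-2w)$ to these two steps (the first by conditional Jensen, the second by ordinary Jensen) yields $\mathbb{E}\,g(N-2Y)\le\mathbb{E}\,g(N-2X)$, i.e.\ \eqref{Eq:KernelInequality} in non-strict form; and when $g$ is strictly convex the first step is strict, so \eqref{Eq:KernelInequality} holds. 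That $x,y\in\ocal$ enters precisely to guarantee $\{s\sigma,t\tau\}\neq\{s\tau,t\sigma\}$, $s\sigma>t\tau$ and $\lambda\in(0,1)$, making the Jensen steps genuinely strict; the fully degenerate case $t=\tau=0$ (where $Y\equiv0$) is immediate from Jensen. Synthetically: $X$ and $Y$ are linear images of the i.i.d.\ pair $(u,v)$ whose coefficient multisets have equal product, hence, after rescaling to equal sum, are comparable by majorization --- the classical sufficient condition for the convex order of weighted i.i.d.\ sums.

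\emph{Step 2: necessity.} Assume $K\in C^2((0,+\infty))$ (so $g\in C^2$) and that \eqref{Eq:KernelInequality} holds throughout $\ocal\times\ocal$; I claim $g$ is strictly convex. First, $g$ is convex: if $g''(\rho_0)<0$ for some $\rho_0>0$, take a family $x_\varepsilon,y_\varepsilon\in\ocal$ with $\sigma_\varepsilon=2\varepsilon$, $t_\varepsilon=\tau_\varepsilon=\varepsilon$, $s_\varepsilon=\sqrt{\rho_0-6\varepsilon^2}$, so that $N_\varepsilon\to\rho_0$ while $|X_\varepsilon|,|Y_\varepsilon|\le s_\varepsilon\sigma_\varepsilon+t_\varepsilon\tau_\varepsilon=O(\varepsilon)$ uniformly in $u,v$. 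A second-order Taylor expansion of $g$ at $N_\varepsilon$, legitimate since $g\in C^2$ and the increments are uniformly $O(\varepsilon)$, together with $\mathbb{E}\,X_\varepsilon=\mathbb{E}\,Y_\varepsilon=0$, gives
\begin{gather*}
\overline K(x_\varepsilon,y_\varepsilon)-\overline K(x_\varepsilon,y_\varepsilon^\star)=2\,g''(N_\varepsilon)\bigl(\mathbb{E}\,X_\varepsilon^2-\mathbb{E}\,Y_\varepsilon^2\bigr)+o(\varepsilon^2),\\
\mathbb{E}\,X^2-\mathbb{E}\,Y^2=\mathbb{E}[u^2]\,(s^2-t^2)(\sigma^2-\tau^2)>0 ,
\end{gather*}
so the left-hand side is negative for small $\varepsilon$, contradicting \eqref{Eq:KernelInequality}. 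Second, $g$ has no affine piece: if $g$ were affine on some $[\rho_1,\rho_2]\subset(0,+\infty)$, choose $x,y\in\ocal$ with $N=\tfrac12(\rho_1+\rho_2)$ and $s\sigma+t\tau$ so small that all values $N-2X$, $N-2Y$ lie in $[\rho_1,\rho_2]$; then, since $X,Y$ have mean zero, both integrals equal $g(N)$, contradicting the strict inequality in \eqref{Eq:KernelInequality}. A convex function with no affine piece is strictly convex, which is \eqref{Eq:SqrtConvex}.

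\emph{Main obstacle.} The crux is Step 1 --- pinning down how convexity of $\tau\mapsto K(\sqrt\tau)$ interacts with the exchange $\sigma\leftrightarrow\tau$. A direct pointwise comparison of the integrands does not work, since for fixed $(u,v)$ the quadruples $\{N\pm2s\sigma u\pm2t\tau v\}$ and $\{N\pm2s\tau u\pm2t\sigma v\}$ have different barycentres, so no convexity inequality applies pointwise; one must integrate out first, reaching the convex-order comparison between $X$ and $Y$. The identity $(s\sigma)(t\tau)=(s\tau)(t\sigma)$ together with the sign $(s-t)(\sigma-\tau)>0$ is exactly what makes the underlying majorization hold, and is the structural reason why the natural hypothesis is convexity of $\tau\mapsto K(\sqrt\tau)$ and not, say, convexity or monotonicity of $K$ itself.
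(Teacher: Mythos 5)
Your proposal is correct, and it takes a genuinely different route from the paper's. For sufficiency, the paper fixes $e\in\Sph^{m-1}$, splits $O(m)^2$ into the four regions $Q_1,\dots,Q_4$ of \eqref{Eq:DefQ} and proves, for each fixed $R\in Q_1$, the pointwise four-term inequality \eqref{Eq:InequalityIntegrandKernelInequalityProof}; this is reduced, via the auxiliary function $g(\tau)=K(\sqrt{N+2\tau})+K(\sqrt{N-2\tau})$ with $N=|x|^2+|y|^2$, to the convexity criterion of Proposition~\ref{Prop:EquivalenceK(sqrt)Convex<->Inequality} together with the case-by-case algebra of Lemma~\ref{Lemma:ComputationABCD}. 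You instead integrate in $R$ first, write $\overline{K}(x,y)=\mathbb{E}\,g(N-2X)$ and $\overline{K}(x,y^\star)=\mathbb{E}\,g(N-2Y)$ with $X=s\sigma u+t\tau v$, $Y=s\tau u+t\sigma v$ and $u,v$ i.i.d.\ symmetric, and prove that $Y$ is dominated by $X$ in the convex order, using the product identity $(s\sigma)(t\tau)=(s\tau)(t\sigma)$, a coefficient-contraction step and an exchangeability/convex-combination step; your two Jensen-type inequalities are the distributional counterpart of the paper's grouping of $R$, $-R$, $R_\star$, $-R_\star$, and they let you dispense with Proposition~\ref{Prop:EquivalenceK(sqrt)Convex<->Inequality} and the sign analysis of Lemma~\ref{Lemma:ComputationABCD} (your checks that $\lambda\in(0,1)$, $s\sigma>t\tau$ and $c<b''$, including the degenerate case $t=\tau=0$, are complete, and $b+b''=s\sigma+t\tau$ keeps all arguments of $g$ in the admissible range). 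For necessity, the paper constructs the open set $\Omega_{\ell_1,\ell_2}$ and reverses the same four-term comparison on an interval of concavity before invoking $C^2$ regularity, whereas you rule out $g''<0$ by a second-order Taylor expansion at nearly degenerate configurations (using $\mathbb{E}X^2-\mathbb{E}Y^2=\mathbb{E}[u^2](s^2-t^2)(\sigma^2-\tau^2)>0$) and exclude affine pieces by an exact equality; both are valid under the $C^2$ hypothesis. Two minor remarks: your ``main obstacle'' comment is not accurate as stated — a pointwise comparison \emph{does} work once one groups $R,-R,R_\star,-R_\star$ (this is exactly what the paper does; for that grouping the two quadruples share the barycentre $N$, and what fails for the naive grouping by the four sign choices is majorization, not the barycentre) — and, as in the paper's argument, when $K$ blows up at the origin one should read $\overline{K}(x,y)=+\infty$ on the $O(m)^2$-orbit diagonal (relevant only for $m=1$ or on a null set), where \eqref{Eq:KernelInequality} is trivial.
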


This theorem is proved in Section~\ref{Sec:OperatorOddF} (see Propositions~\ref{Prop:KernelInequalitySufficientCondition} and \ref{Prop:KernelInequalityNecessaryCondition}). Its proof is based on breaking the integral defining $\overline{K}$ in four clever regions ---see \eqref{Eq:DefQ}--- that allow to compare the integrands for $y\in \ocal$ and for its reflected $y^*\in \ical$. We will use a result on convex functions proved in Appendix~\ref{Sec:AuxiliaryResults} (Proposition~\ref{Prop:EquivalenceK(sqrt)Convex<->Inequality}). In the previous statement, by strict convexity in \eqref{Eq:SqrtConvex} we mean that
$$
K(\sqrt{\tau_1}) + K(\sqrt{\tau_2}) > 2 K(\sqrt{(\tau_1 + \tau_2)/2})
$$
for every $\tau_1$, $\tau_2 \in (0,+\infty)$.

In \cite{JarohsWeth}, Jarohs and Weth study solutions to general integro-differential equations which are odd with respect to a hyperplane. Here the natural sufficient condition on $K$ to have a positive kernel when acting on odd functions is that $K$ is decreasing in the orthogonal direction to the hyperplane. That this suffices is readily deduced after making a change of variables given by the symmetry with respect to such hyperplane. In our case, since we deal with a more complex symmetry, the kernel $K$ is required to satisfy further assumptions than just monotonicity. Moreover, the proof of Theorem~\ref{Th:SufficientNecessaryConditions} is quite involved and requires a finer argument. Indeed, if we simply make the change $y \mapsto y^\star$ in $\eqref{Eq:DefOfLu}$, following \cite{JarohsWeth}, we should prove that $K(|x-y|) > K(|x-y^\star|)$ for every $x$ and $y$ in $\ocal$, but this is false even in the easiest case $L_K = \fraclaplacian$ and $2m=2$. Instead, if we write $L_K$ in the form \eqref{Eq:L_KWithKbar} with the kernel $\overline{K}$, the analogous positivity condition \eqref{Eq:KernelInequality} holds if we assume $K(\sqrt{\cdot})$ to be convex. Here the use of the $(s,t)$ variables would not simplify the proof of Theorem~\ref{Th:SufficientNecessaryConditions}. As mentioned in Appendix~\ref{Sec:stcomputations}, an analogous result can be established for the kernel $J$ in \eqref{Eq:L_K-st}, but its proof presents exactly the same difficulties as the one for $\overline{K}$.

The first direct consequence of the positivity condition \eqref{Eq:KernelInequality} is the following maximum principle.

\begin{proposition}[Maximum principle for odd functions with respect to $\ccal$]
	\label{Prop:MaximumPrincipleForOddFunctions} Let $\Omega \subset \ocal$ be an open set and let $L_K$ be an integro-differential operator with a radially symmetric kernel $K$ satisfying the positivity condition \eqref{Eq:KernelInequality}.  Let $u\in C^{\alpha}(\Omega)\cap C(\overline{\Omega})\cap L^\infty(\R^{2m})$, with $\alpha > 2\s$, be a doubly radial function which is odd with respect to the Simons cone. 
	
	\begin{enumerate}[label=(\roman{*})]
		\item  (Weak maximum principle)
		Assume that
		$$
		\beqc{\PDEsystem}
		L_K u + c(x) u & \geq & 0 & \text{ in } \Omega\,,\\
		u & \geq & 0 & \text{ in } \ocal \setminus \Omega\,,
		\eeqc
		$$
		with $c \geq 0$, and that either
		$$
		\Omega \text{ is bounded} \quad \text{ or } \liminf_{x \in \ocal,\,|x|\to +\infty} u(x) \geq 0\,.
		$$
		Then, $u \geq 0$ in $\Omega$.
		
		\item (Strong maximum principle)  
		Assume that $L_K u + c(x) u\geq 0$ in $\Omega$, with $c$ any continuous function, and that $u\geq 0$ in $\ocal$. Then, either $u\equiv 0$ in $\ocal$ or $u > 0$ in $\Omega$.
	\end{enumerate} 
\end{proposition}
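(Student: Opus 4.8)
The plan is to work with the identity $L_K u = L_K^\ocal u$ on $\ocal$, valid for doubly radial functions $u$ that are odd with respect to $\ccal$ (established in Section~\ref{Sec:OperatorOddF}), and to exploit the second form of \eqref{Eq:OperatorOddF},
\begin{equation*}
L_K^\ocal u(x) = \int_{\ocal} \{u(x) - u(y)\}\{\overline{K}(x,y) - \overline{K}(x,y^\star)\}\d y \;+\; 2u(x)\int_{\ocal}\overline{K}(x,y^\star)\d y\,,
\end{equation*}
together with the strict positivity \eqref{Eq:KernelInequality}. Three preliminary remarks will be needed: (a) since $u\in C^\alpha(\Omega)$ with $\alpha>2\s$ and $u\in L^\infty(\R^{2m})$, the quantity $L_K u(x)$ is well defined pointwise (principal value) for $x\in\Omega$, and the two integrals above are individually finite --- the second because $\overline{K}(x,\cdot^\star)$ has no singularity on $\ocal$ and decays at infinity by \eqref{Eq:Ellipticity}; (b) $\int_\ocal\overline{K}(x,y^\star)\d y>0$ because $\overline{K}>0$; (c) $u$ vanishes on $\ccal$, being doubly radial and odd. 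After this setup the whole proof is sign bookkeeping at an extremum.

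For the weak maximum principle (i), suppose by contradiction $\inf_\Omega u<0$. Using continuity of $u$ on $\overline{\Omega}$ and, in the unbounded case, $\liminf_{x\in\ocal,\,|x|\to\infty}u(x)\geq 0$, this negative infimum is attained at some $x_0\in\overline{\Omega}$; since $u\geq 0$ on $\ocal\setminus\Omega$ and $u=0$ on $\ccal$, necessarily $x_0\in\Omega$. As $x_0$ minimizes $u$ over all of $\ocal$, we have $u(x_0)-u(y)\leq 0$ for every $y\in\ocal$, so in the displayed identity the first integral is $\leq 0$ by \eqref{Eq:KernelInequality}, while the second term $2u(x_0)\int_\ocal\overline{K}(x_0,y^\star)\d y$ is $<0$ since $u(x_0)<0$; hence $L_K u(x_0)=L_K^\ocal u(x_0)<0$. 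On the other hand $c(x_0)\geq 0$ and $u(x_0)<0$ give $c(x_0)u(x_0)\leq 0$, so $L_K u(x_0)\geq -c(x_0)u(x_0)\geq 0$ by the hypothesis $L_K u+c\,u\geq 0$, a contradiction. Thus $u\geq 0$ in $\Omega$.

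For the strong maximum principle (ii) it suffices to prove that if $u(x_0)=0$ for some $x_0\in\Omega$ then $u\equiv 0$ in $\ocal$ (the stated dichotomy then follows, since $u\geq 0$). As $u\geq 0$ in $\ocal$, such an $x_0$ is a global minimum, so evaluating the identity at $x_0$ --- the second term vanishing since $u(x_0)=0$ --- gives
\begin{equation*}
L_K u(x_0)=L_K^\ocal u(x_0)=-\int_{\ocal}u(y)\{\overline{K}(x_0,y)-\overline{K}(x_0,y^\star)\}\d y\leq 0
\end{equation*}
by \eqref{Eq:KernelInequality}. Since $c(x_0)u(x_0)=0$, the hypothesis $L_K u+c\,u\geq 0$ yields $L_K u(x_0)\geq 0$, hence $L_K u(x_0)=0$ and the nonnegative integrand above vanishes for a.e.\ $y\in\ocal$. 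Because $\overline{K}(x_0,y)-\overline{K}(x_0,y^\star)>0$ \emph{strictly} for every $y\in\ocal$, we conclude $u(y)=0$ for a.e.\ $y\in\ocal$, i.e.\ $u\equiv 0$ in $\ocal$.

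As for the main obstacle: once Theorem~\ref{Th:SufficientNecessaryConditions} is available the argument is short, and the only delicate points are the routine ones --- justifying that $L_K u$ may be evaluated pointwise at the extremum and split as in \eqref{Eq:OperatorOddF} into two finite integrals (this is exactly what the regularity threshold $\alpha>2\s$, together with $u\in L^\infty$ and the doubly radial symmetry, provides), and ensuring the extremum lies in the \emph{open} set $\Omega$, which in the unbounded case relies on the $\liminf$ condition and otherwise on $u\geq 0$ on $\ocal\setminus\Omega$ and $u=0$ on $\ccal$. The genuinely nonlocal feature, absent in the classical setting, is that in (ii) no connectedness of $\Omega$ is required: the full support and strict positivity of $\overline{K}(x,\cdot)-\overline{K}(x,\cdot^\star)$ propagate the vanishing of $u$ to all of $\ocal$ at once.
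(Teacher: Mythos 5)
Your proof is correct and follows essentially the same route as the paper: evaluate the odd-symmetry expression of $L_K$ from Lemma~\ref{Lemma:OperatorOddF} at a negative minimum (resp.\ at an interior zero) and use the kernel inequality \eqref{Eq:KernelInequality} to reach a sign contradiction. The only difference is cosmetic: in part (ii) you deduce $u\equiv 0$ directly from the vanishing of the nonnegative integrand, whereas the paper assumes $u\not\equiv 0$ and derives a strict inequality --- the same computation packaged contrapositively.
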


This statement differs from the usual maximum principle for $L_K$ in the fact that we only assume that $u$ is nonpositive in $\ocal\setminus \Omega$, instead of in $\R^{2m}\setminus \Omega$ (an assumption that makes no sense for odd functions). This form of maximum principle is analogous to the ones in \cite{ChenLiLi, JarohsWeth}, where similar statements are considered for functions that are odd with respect to a hyperplane.

Since in this paper we will always consider doubly radial functions $u$ which are odd with respect to the Simons cone, $L_K u =L_K^\ocal u$ in $\ocal$. Thus, to simplify the notation we will always write $L_K$ for $L_K^\ocal$. To mean that Proposition~\ref{Prop:MaximumPrincipleForOddFunctions} holds, we will say that $L_K$ has a maximum principle in $\ocal$ when acting on doubly radial odd functions.

Let us now turn to the variational problem from which equation \eqref{Eq:NonlocalAllenCahn} arises. As it is well known, \eqref{Eq:NonlocalAllenCahn} is the Euler-Lagrange equation associated to the energy functional
\begin{equation}
\label{Eq:Energy}
\begin{split}
\ecal(w, \Omega) &:= 
\dfrac{1}{4} \left \{ \int_\Omega \int_\Omega |w(x) - w(y)|^2 K(x-y) \d x \d y \right. \qquad \qquad \\
& \quad \quad \quad +\left. 2 \int_\Omega \int_{\R^{2m} \setminus \Omega} |w(x) - w(y)|^2 K(x-y) \d x \d y \right \} + \int_{\Omega} G(w) \d x \,,
\end{split}
\end{equation}
where $G$ a $C^2$ function satisfying $G' = -f$. In this paper, we assume the following conditions on $G$:
\begin{equation}
\label{Eq:HipothesesG}
G \textrm{ is even and } G\geq G(\pm 1 )=0 \textrm{ in } \R\,.
\end{equation}
Note that the previous conditions on $G$ yield that $f$ is a $C^1$ odd function with $f(0)=f(\pm 1)=0$. In some cases, as in Theorem~\ref{Th:Existence} below, we will further assume that $G(0)>0$. In such situation, equation \eqref{Eq:NonlocalAllenCahn} can be seen as a model for phase transitions. The Allen-Cahn nonlinearity, $f(u) = u-u^3$, is the most typical example.

Using the same type of arguments as for the operator $L_K$, we can rewrite the energy of doubly radial odd functions with a suitable new expression that involves the kernel 
$$\overline{K}(x,y)-\overline{K}(x,y^\star)>0$$
and that only takes into account the values of the functions in $\ocal$. This will be extremely useful in many computations and estimates involving the nonlocal energy $\ecal$ (see Sections~\ref{Sec:EnergyForOddF} and \ref{Sec:EnergyEstimate}). To write this new expression, we introduce the following notation.  For $A$, $B\subset \ocal$, two sets of double revolution, we define
\begin{equation*}
\begin{split}
I_w(A,B) := 2\int_A  \int_B  \ |w(x)-w(y)|^2 \left\{ \overline{K}(x,y) - \overline{K}(x,y^\star) \right\} \d x \d y  \\
+\, 4 \int_A  \int_B  \left\{w^2(x)+w^2(y)\right\} \overline{K}(x,y^\star) \d x \d y\,.
\end{split}
\end{equation*}
Then, as proved in Section~\ref{Sec:EnergyForOddF} (see Lemma~\ref{Lemma:ShortExpressionEnergy}), we can rewrite the energy of a doubly radial odd function $w$ as
\begin{equation}
\label{Eq:ShortExpressionEnergyIntro}
\ecal(w, \Omega) = \frac{1}{4} \big \{I_w(\Omega\cap\ocal,\Omega\cap\ocal) +  2I_w(\Omega\cap\ocal,\ocal\setminus\Omega) \big \} + 2\int_{\Omega\cap \ocal} G(w) \d x \,.
\end{equation}

Thanks to this new expression for the energy, we are able to establish the second main result of this paper. It is the following energy estimate for doubly radial odd minimizers of $\ecal$. To define such minimizers properly, we denote by $\widetilde{\H}^K_{0, \mathrm{odd}}(B_R)$ the space of doubly radial odd functions that vanish outside $B_R$ and for which the energy $\ecal$ is well defined (see Section~\ref{Sec:EnergyForOddF} for the precise definition). We say that $u\in \widetilde{\H}^K_{0, \mathrm{odd}}(B_R)$ is a doubly radial odd minimizer of $\ecal$ in $B_R$ if
$$
\ecal(u,B_R) \leq \ecal (w,B_R)
$$
for every $w\in \widetilde{\H}^K_{0, \mathrm{odd}}(B_R)$. 

\begin{theorem}
	\label{Th:EnergyEstimate} 
	Let $K$ be a radially symmetric kernel satisfying the convexity assumption \eqref{Eq:SqrtConvex}\footnote{In this theorem, as well as in Theorem~\ref{Th:Existence}, we assume \eqref{Eq:SqrtConvex} instead of the positivity condition \eqref{Eq:KernelInequality} ---recall that for $C^2$ kernels they are equivalent.  The reason for this is that in the proofs we will make use of some estimates that require the Lipschitz regularity of the kernel $K$ (see Remark~\ref{Remark:InteriorRegularity} below). Such regularity for $K$ holds if \eqref{Eq:SqrtConvex} is satisfied, but it is not clear if this happens, when $K \notin C^2$, assuming \eqref{Eq:KernelInequality} instead.} and such that $L_K\in \lcal_0(2m, \s, \lambda, \Lambda)$. Assume that $G$ is a potential satisfying \eqref{Eq:HipothesesG}. Let $S\geq2$ and let $u\in \widetilde{\H}^K_{0, \mathrm{odd}}(B_R)$ be a doubly radial odd minimizer of $\ecal$ in $B_R$, with $R>S+4$. 
	
	Then
	\begin{equation}
	\label{Eq:EnergyEstimate} 
	\ecal (u,B_S) \leq 
	\begin{cases}
	C \ S^{2m-2\s}\ \ \ \ &\textrm{if } \ \ \s\in(0,1/2),\\
	C \ S^{2m-1} \log S\ \ \ \ &\textrm{if } \ \ \s=1/2,\\
	C \ S^{2m-1}\ \ \ \ &\textrm{if } \ \ \s\in(1/2,1),\\
	\end{cases}
	\end{equation}
	where $C$ is a positive constant depending only on $m$, $\s$, $\lambda$, $\Lambda$, and $\norm{G}_{C^2([-1,1])}$.
\end{theorem}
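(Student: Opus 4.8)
\emph{Strategy and competitor.} The plan is to bound $\ecal(u,B_S)$ by testing the minimality of $u$ against an explicit competitor built from a Lipschitz transition profile across the Simons cone, and then estimating the energy of that competitor. I would set $\psi(x):=\max\{-1,\min\{1,|x'|-|x''|\}\}$, a bounded Lipschitz function which is doubly radial, odd with respect to $\ccal$, vanishes exactly on $\ccal$, and equals $\pm1$ outside the tubular neighbourhood $\ncal:=\{x\in\R^{2m}:\big||x'|-|x''|\big|<1\}$. Fixing a radial cut-off $\eta$ with $\eta\equiv1$ on $B_S$, $\eta\equiv0$ on $\R^{2m}\setminus B_{S+1}$ and $|\nabla\eta|\le2$, I take $v:=u+\eta(\psi-u)$: this is doubly radial and odd with respect to $\ccal$, coincides with $u$ outside $B_{S+1}$, satisfies $\norm{v}_{L^\infty(\R^{2m})}\le1$ (recall that, after the usual truncation, one may assume $\norm{u}_{L^\infty(\R^{2m})}\le1$), and lies in $\widetilde{\H}^K_{0,\mathrm{odd}}(B_R)$ since $v-u$ is a bounded Hölder function supported in $B_{S+1}\Subset B_R$. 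Because the increments $\ecal(\cdot,B_R)-\ecal(\cdot,B_r)$ (for $S+1\le r\le R$) depend only on the restriction of a function to $\R^{2m}\setminus B_r$, where $u$ and $v$ agree, minimality of $u$ yields $\ecal(u,B_{S+1})\le\ecal(v,B_{S+1})$; combined with the monotonicity $\ecal(u,B_S)\le\ecal(u,B_{S+1})$ (which uses $G\ge0$), it suffices to prove $\ecal(v,B_{S+1})\le C\,m_\s(S)$, where $m_\s(S)$ denotes the right-hand side of \eqref{Eq:EnergyEstimate}.

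\emph{The key estimate.} The heart of the argument is the bound on the Dirichlet energy of $\psi$ localised to $B_S$:
\[
\int_{B_S}\int_{\R^{2m}}|\psi(x)-\psi(y)|^2\,K(x-y)\d y\d x\ \le\ C\,m_\s(S).
\]
To obtain it I write $g(x):=\big||x'|-|x''|\big|=\sqrt2\,\dist(x,\ccal)$, which is Lipschitz with $|\nabla g|=\sqrt2$ a.e. If $g(x)\ge2$ then $\psi$ is constant (equal to $\psi(x)=\pm1$) on the ball of radius $(g(x)-1)/\sqrt2$ about $x$, so by the upper bound in \eqref{Eq:Ellipticity} the inner integral is $\le C\,(g(x)-1)^{-2\s}$; if $g(x)<2$ it is bounded by a constant depending only on $m,\s,\Lambda$, using $\int_{|z|<1}|z|^2K\d z<\infty$ and $\int_{|z|\ge1}K\d z<\infty$. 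Since $\{g<2\}\cap B_S$ has measure $\lesssim S^{2m-1}$, the coarea formula gives
\[
\int_{\{g\ge2\}\cap B_S}(g(x)-1)^{-2\s}\d x\ \lesssim\ S^{2m-1}\int_1^{\sqrt2\,S}\rho^{-2\s}\d\rho,
\]
where I use the elementary geometric fact that $\mathcal H^{2m-1}\big(\{g=r\}\cap B_S\big)\le C(m)\,S^{2m-1}$ for every $r>0$, proved by passing to the polar-type coordinates $x=(s\omega,t\tilde\omega)$ with $s=|x'|$, $t=|x''|$, $\omega,\tilde\omega\in\Sph^{m-1}$. The last integral is $\asymp S^{1-2\s}$, $\asymp\log S$, or $\asymp1$ according to whether $\s<1/2$, $\s=1/2$, or $\s>1/2$ — which is exactly the trichotomy in \eqref{Eq:EnergyEstimate}.

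\emph{Expanding the competitor's energy.} Granting the above, I would expand $\ecal(v,B_{S+1})$ into the pairwise interactions among $B_S$, the annulus $A:=B_{S+1}\setminus B_S$ (of measure $\lesssim S^{2m-1}$) and $E:=\R^{2m}\setminus B_{S+1}$, plus the term $\int_{B_{S+1}}G(v)$. On $B_S$ one has $v=\psi$, so the $B_S$--$B_S$ interaction is controlled by the displayed estimate and $\int_{B_S}G(\psi)\lesssim S^{2m-1}$ (as $G(\psi)$ vanishes off $\ncal$). All interactions involving $A$ are handled using $|v|\le1$, the Lipschitz bound for $\psi$, the pointwise inequalities $|\eta(\psi-u)(x)|\le2\eta(x)\le4\,\dist(x,\partial B_{S+1})$ and $1-\eta(x)\le2\,\dist(x,\partial B_S)$ on $A$, the bound $\int_E K(x-y)\d y\le C\,\dist(x,\partial B_{S+1})^{-2\s}$, and — crucially — the interior estimate $\int_{\R^{2m}}|u(x)-u(y)|^2K(x-y)\d y\le C$ valid for $x\in B_{S+2}$ (this is where the Lipschitz regularity of $K$, equivalently \eqref{Eq:SqrtConvex}, enters, via Remark~\ref{Remark:InteriorRegularity}); each such term is $\lesssim S^{2m-1}+\int_A\dist(\cdot,\partial)^{2-2\s}\lesssim S^{2m-1}$. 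For the $B_S$--$E$ interaction the two points are at distance $\ge1$, so $|v(x)-v(y)|\le2$ and $\int_{B_S}\!\int_E K(x-y)\d y\d x\lesssim\int_{B_S}\dist(x,\partial B_{S+1})^{-2\s}\d x\lesssim m_\s(S)$ by the same coarea computation (no divergence here, since that distance is $\ge1$ on $B_S$). The only delicate piece is the $A$--$E$ interaction: writing $v(x)-v(y)=(u(x)-u(y))+\eta(x)(\psi(x)-u(x))$ for $x\in A$, $y\in E$, and using Young's inequality, I bound it by $2b+C\,m_\s(S)$, where $b:=\tfrac12\int_{B_{S+1}}\!\int_E|u(x)-u(y)|^2K(x-y)\d y\d x$. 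Finally $b\lesssim m_\s(S)$ \emph{directly}: split $B_{S+1}$ according to whether $\dist(x,\partial B_{S+1})\ge1$ (use $|u(x)-u(y)|\le2$ and the tail bound on $K$, integrated by coarea) or $<1$ (a set of measure $\lesssim S^{2m-1}$, on which the interior estimate applies). Putting everything together gives $\ecal(u,B_S)\le\ecal(u,B_{S+1})\le\ecal(v,B_{S+1})\le C\,m_\s(S)$ with $C=C(m,\s,\lambda,\Lambda,\norm{G}_{C^2([-1,1])})$.

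\emph{Main obstacle.} The step I expect to require the most care is the bookkeeping of the nonlocal cross-interactions across $\partial B_{S+1}$: one must organize the estimates so that no term secretly reintroduces $\ecal(u,\cdot)$ on a set whose energy is not yet controlled — this is precisely why $b$ must be bounded directly rather than through minimality — and one must check that the only potentially non-integrable contributions (the $A$--$E$ interaction and the interactions near $\partial B_{S+1}$) are tamed by the extra vanishing $\dist(\cdot,\partial B_{S+1})^{2-2\s}$ coming from $\eta\equiv0$ on $E$, together with the interior regularity of $u$. The geometric input $\mathcal H^{2m-1}(\{g=r\}\cap B_S)\lesssim S^{2m-1}$, uniform in $r\in(0,\sqrt2\,S)$, is also essential and should be verified carefully in the $(s,t)$-coordinates.
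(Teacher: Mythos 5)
Your argument is correct, and it reaches \eqref{Eq:EnergyEstimate} by a genuinely different competitor than the paper's. The paper follows Savin--Valdinoci more literally: it builds $w=\min\{u,\Psi_S\}$ on $\overline{B_{S+2}\cap\ocal}$, where $\Psi_S$ is the radial profile $\phi_S$ multiplied near $\ccal$ by $\mu\dist(\cdot,\ccal)$, and the whole point of the construction (properties (H1)--(H5) of Lemma~\ref{Lemma:ExistenceCompetitor}) is to force exact matching $w=u$ on $\partial B_{S+2}\cap\ocal$, which requires both $0\leq u\leq1$ in $\ocal$ and the uniform Lipschitz bound $\mu=\seminorm{u}_{\Lip(\overline{B_{S+3}})}\leq C$; the energy of $w$ is then estimated through the single auxiliary function $d_S=\min\{\dist(\cdot,\partial B_{S+1}),\mu\dist(\cdot,\ccal)\}$. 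You instead glue $u$ to the explicit odd profile $\psi$ by a cutoff in the annulus $A=B_{S+1}\setminus B_S$, $v=u+\eta(\psi-u)$, so no boundary matching is needed and you never use the sign of $u$ in $\ocal$, only $|u|\leq1$; the price is the bookkeeping of the cross-interactions $B_S$--$A$, $A$--$A$, $A$--$E$, $B_S$--$E$, which you control correctly via $\eta\leq 2\dist(\cdot,\partial B_{S+1})$, $1-\eta\leq 2\dist(\cdot,\partial B_S)$, the tail bound $\int_E K(x-\cdot)\leq C\dist(x,\partial B_{S+1})^{-2\s}$, and the uniform interior bound $\int_{\R^{2m}}|u(x)-u(y)|^2K(x-y)\d y\leq C$ for $x\in B_{S+2}$ (the crucial non-circular direct bound on $b$). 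Both proofs ultimately rest on the same two inputs and the same two coarea-type integrals: the convexity assumption \eqref{Eq:SqrtConvex} enters through Lemma~\ref{Lemma:DecreaseEnergy} (to get $|u|\leq1$) and through the Lipschitz regularity of $K$ needed for the interior estimates of Remark~\ref{Remark:InteriorRegularity}, and the trichotomy in $\s$ comes from $\int_{B_S}(S+1-|x|)^{-2\s}$ and from $\dist(\cdot,\ccal)^{-2\s}$ integrated over a slab, exactly as in \eqref{Eq:EnergyEstimatedS} (your $\mathcal H^{2m-1}(\{g=r\}\cap B_S)\leq C S^{2m-1}$ is the same $(s,t)$-computation the paper does in the $(\overline{s},\overline{t})$ variables). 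One shared caveat, not specific to you: deducing $|u|\leq1$ for the given minimizer from Lemma~\ref{Lemma:DecreaseEnergy} deserves a line (the lemma only says the truncation does not increase the energy), but the paper uses it at exactly the same level of detail. Your route arguably buys a simpler competitor and avoids the min-construction and the matching argument; the paper's buys fewer interaction terms to estimate, since $w\equiv u$ outside $B_{S+2}$ removes all annulus cross-terms.
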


In the proof of this result, a first basic ingredient is that $-1\leq u\leq 1$, as provided by Lemma~\ref{Lemma:DecreaseEnergy}. This information, $|u|\leq 1$, is also of importance for a solution of an Allen-Cahn equation, as in the existence Theorem~\ref{Th:Existence} below. That $|u|\leq 1$ is proved with a variational cutting argument: cutting above $1$ and below $-1$ reduces de energy. We believe that this property requires $\overline{K}(x,y)- \overline{K}(x,y^\star)$ to be nonnegative. In addition, the proof of Lemma~\ref{Lemma:DecreaseEnergy} is a priori not simple since it involves a nonlocal energy of functions with symmetries. We succeeded to greatly simplify the computations by writing the energy as in \eqref{Eq:ShortExpressionEnergy}, obtaining a short proof.

Note that Theorem~\ref{Th:EnergyEstimate} does not follow from the energy estimate for general minimizers stated in \cite{SavinValdinoci-EnergyEstimate} by Savin and Valdinoci. The minimizers that they consider do not have any type of symmetry. In our case, the function $u$ in the previous statement minimizes the energy in a smaller class of functions and the result in \cite{SavinValdinoci-EnergyEstimate} cannot be applied. Nevertheless, we are able to adapt the arguments of Savin and Valdinoci to our setting.  The strategy they follow is to compare the energy of $u$ with the one of a suitable competitor which is constructed by taking the minimum between $u$ and a radially symmetric auxiliary function ---see \eqref{Eq:DefOfPhiS} below. Such competitor is not permitted in our case, since it is not odd with respect to the Simons cone. Nevertheless, we show in Section~\ref{Sec:EnergyEstimate} how to modify the auxiliary functions of \cite{SavinValdinoci-EnergyEstimate} to carry out the same type of arguments. The assumption \eqref{Eq:SqrtConvex} will be crucial to guarantee that $0\leq u \leq 1$ in $\ocal$.

The particular result of Theorem~\ref{Th:EnergyEstimate} for the fractional Laplacian has been proved by Cabré and Cinti \cite{CabreCinti-EnergyHalfL} in the case of the half-Laplacian, and extended to all the powers $0<\s<1$ by Cinti \cite{Cinti-Saddle2} (see \cite{CabreCinti-SharpEnergy} for an extension to non-doubly radial minimizers). These papers use the local extension problem and therefore their proofs cannot be extended to general operators like $L_K$. Our proof, following \cite{SavinValdinoci-EnergyEstimate}, overcomes this issue.

As an application of the previous results, we prove, by using standard variational methods, the existence of saddle-shaped solution to \eqref{Eq:NonlocalAllenCahn} when $f$ is of Allen-Cahn type. We say that a bounded solution $u$ to \eqref{Eq:NonlocalAllenCahn} is a \emph{saddle-shaped} solution if $u$ is doubly radial, odd with respect to the Simons cone, and positive in $\ocal$. 

\begin{theorem}[Existence of saddle-shaped solution]
	\label{Th:Existence}
	Let $G$ satisfy \eqref{Eq:HipothesesG}, $G(0)>0$, and let $f=-G'$. Let $K$ be a radially symmetric kernel satisfying the convexity assumption \eqref{Eq:SqrtConvex} and such that $L_K\in \lcal_0(2m, \s, \lambda, \Lambda)$. 
	
	Then, for every even dimension $2m \geq 2$, there exists a saddle-shaped solution $u$ to \eqref{Eq:NonlocalAllenCahn}. In addition, $u$ satisfies $|u|<1$ in $\R^{2m}$.
\end{theorem}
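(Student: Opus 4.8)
The plan is to prove Theorem~\ref{Th:Existence} via a standard direct-method/constrained-minimization argument, but carried out \emph{within the class of doubly radial odd functions}, so that the minimizer automatically inherits the saddle symmetry. First I would fix a large radius $R$ and minimize the energy $\ecal(\cdot, B_R)$ over the space $\widetilde{\H}^K_{0,\mathrm{odd}}(B_R)$ of doubly radial odd competitors vanishing outside $B_R$. Coercivity and lower semicontinuity of $\ecal$ with respect to the relevant (weak) topology give the existence of a minimizer $u_R$; by Lemma~\ref{Lemma:DecreaseEnergy} (the variational cutting argument, which uses the positivity $\overline{K}(x,y)>\overline{K}(x,y^\star)$ coming from \eqref{Eq:SqrtConvex}) one may assume $-1\le u_R\le 1$. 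A key point is that $u_R$ is \emph{not identically zero}: here is where $G(0)>0$ enters — the zero function has energy $\ecal(0,B_R)=|B_R|\,G(0)$, which grows like $R^{2m}$, whereas by the energy estimate of Theorem~\ref{Th:EnergyEstimate} the minimal energy in $B_S$ (hence in $B_R$ for any competitor that, say, equals a fixed nontrivial saddle-shaped profile on $B_{S_0}$) grows at most like $R^{2m-1}$ (or with a log factor when $\s=1/2$); so for $R$ large the minimizer is nontrivial and in fact $\ecal(u_R,B_R)\le C R^{2m-1}\log R \ll |B_R|G(0)$.

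Next I would pass to the limit $R\to\infty$. Interior regularity for $L_K\in\lcal_0$ (and the Lipschitz regularity of the kernel $\overline{K}$ afforded by \eqref{Eq:SqrtConvex}, cf.\ Remark~\ref{Remark:InteriorRegularity}) gives uniform $C^{\alpha}$, then $C^{2\s+\alpha}$, estimates on $u_R$ on compact sets, independent of $R$, using $|u_R|\le 1$. By Arzelà–Ascoli and a diagonal argument, $u_R\to u$ locally uniformly (up to a subsequence) to a function $u$ which is still doubly radial, odd with respect to $\ccal$, satisfies $|u|\le 1$, and solves $L_K u=f(u)$ in all of $\R^{2m}$ — the equation is stable under this limit because each $u_R$ solves it in $B_R$ and the nonlocal operator is continuous under local-uniform convergence of uniformly bounded functions (the tails are controlled). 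One must also check that $u$ is itself a minimizer in the doubly radial odd class on every ball, so that $|u|\le 1$ persists and the strict bound can be pursued.

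It remains to show the two qualitative properties: $u>0$ in $\ocal$ (positivity on one side of the cone) and the strict bound $|u|<1$ in $\R^{2m}$. For positivity I would argue that $u\ge 0$ in $\ocal$ — otherwise the even reflection (or a truncation/symmetrization argument) would lower the energy, again using the kernel positivity \eqref{Eq:KernelInequality} — and then apply the strong maximum principle for odd functions, Proposition~\ref{Prop:MaximumPrincipleForOddFunctions}(ii): since $L_K u + c(x)u\ge 0$ in $\ocal$ with $c(x)=-f(u(x))/u(x)$ continuous (as $f\in C^1$, $f(0)=0$), either $u\equiv 0$ in $\ocal$, which is excluded by nontriviality, or $u>0$ in $\ocal$. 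For $|u|<1$: on $\ocal$ we have $0\le u\le 1$; if $u(x_0)=1$ at some interior point then $v:=1-u\ge 0$ satisfies, near $x_0$, $L_K v = -f(u) = -f(1-v)$, and since $f'(1)\le 0$ (from $G\ge G(\pm1)=0$ with $G$ even, $1$ is a minimum of $G$, so $f=-G'$ has $f(1)=0$, $f'(1)=-G''(1)\le 0$), one gets an inequality of the form $L_K v + c(x) v\ge 0$ with $c$ continuous and $v\ge 0$; the strong maximum principle then forces $v\equiv 0$, i.e.\ $u\equiv 1$ in $\ocal$, contradicting oddness (which forces $u=0$ on $\ccal$). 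Hence $u<1$ in $\ocal$, and by oddness $u>-1$ in $\ical$, giving $|u|<1$ everywhere.

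The main obstacle I expect is the passage to the limit combined with preserving minimality and the bound $|u|\le 1$: one needs uniform-in-$R$ interior estimates that genuinely use only $\|u_R\|_{L^\infty(\R^{2m})}\le 1$ and the ellipticity/regularity of the kernel, and one needs to be careful that the local limit $u$ still minimizes within the \emph{odd doubly radial} class (not merely among all perturbations), since the cutting lemma and the positivity-of-energy comparison both rely on that restricted class. A secondary delicate point is the nontriviality step: matching the lower bound $\ecal(0,B_R)\sim c\,R^{2m}$ (needs $G(0)>0$) against an explicit odd competitor whose energy is $O(R^{2m-1}\log R)$ — one should exhibit such a competitor directly (e.g.\ a fixed saddle profile cut off near $\partial B_R$) rather than invoke Theorem~\ref{Th:EnergyEstimate} circularly, although once a nontrivial minimizer is known to exist, Theorem~\ref{Th:EnergyEstimate} applies to it and confirms the sublinear-in-volume growth.
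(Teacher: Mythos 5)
Your overall scheme (minimize in $\widetilde{\H}^K_{0,\mathrm{odd}}(B_R)$, truncate via Lemma~\ref{Lemma:DecreaseEnergy}, get uniform interior estimates, pass to the limit $R\to\infty$, then use the odd strong maximum principle for positivity and a standard strong maximum principle for $|u|<1$) is the same as the paper's, and most steps are sound. But there is a genuine gap in the nontriviality step: you rule out $u_R\equiv 0$ for each large finite $R$ by comparing $\ecal(0,B_R)\sim G(0)|B_R|\sim R^{2m}$ with a competitor of energy $O(R^{2m-1}\log R)$, and later you exclude the alternative ``$u\equiv 0$ in $\ocal$'' in the strong maximum principle ``by nontriviality''. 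However, nontriviality of each $u_R$ does \emph{not} pass to the locally uniform limit: the functions $u_R$ could perfectly well tend to $0$ on every compact set (their ``mass'' drifting towards $\partial B_R$), and nothing in your argument provides a uniform-in-$R$ lower bound such as $\sup_{B_1}u_R\geq\delta>0$. The paper closes exactly this hole by applying Theorem~\ref{Th:EnergyEstimate} on a \emph{fixed} ball $B_S$, uniformly in $R$ (valid for $2<S<R-5$ with $C$ independent of $R$ and $S$), passing this bound to the limit to get $\ecal(u,B_S)\leq C S^{2m-\min\{2\s,1\}}$ (with the log factor when $\s=1/2$), and then noting that $u\equiv 0$ would force $\ecal(0,B_S)=c_m G(0)S^{2m}\leq C S^{2m-\min\{2\s,1\}}$, impossible for $S$ large since $G(0)>0$. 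So the ingredient you need is the one you already cite, but used on fixed balls uniformly in $R$ rather than on $B_R$ itself.

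Two smaller remarks. First, your worry that invoking Theorem~\ref{Th:EnergyEstimate} is circular is unfounded: that theorem is an upper bound valid for \emph{any} doubly radial odd minimizer in $B_R$ (it does not presuppose the minimizer is nontrivial or a solution of anything beyond its Euler--Lagrange equation via Proposition~\ref{Prop:WeakSolutionForAllTestFunctions}), so it may be applied directly to $u_R$; no separate explicit competitor is needed. Second, your concern about preserving minimality of the limit $u$ within the odd doubly radial class is unnecessary for the conclusion: all the paper needs from the variational stage is that $-1\leq u_R\leq 1$ and $u_R\geq 0$ in $\ocal$ (Lemma~\ref{Lemma:DecreaseEnergy}), properties that pass to the pointwise limit, together with the fact that $u$ solves $L_Ku=f(u)$; the maximum principles then do the rest, and your argument for $|u|<1$ (via $v=1-u\geq 0$ and the usual strong maximum principle, using $u=0$ on $\ccal$) is essentially the paper's and is fine --- indeed the hypothesis $f'(1)\leq 0$ is not even needed there, since $f(1)=0$ and one evaluates $L_Kv$ at an interior minimum of $v$.
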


We are interested in the study of this type of solutions since they are relevant in connection with a famous conjecture for the (classical) Allen-Cahn equation raised by De Giorgi, that reads as follows. Let $u$ be a bounded monotone (in some direction) solution to $-\Delta u = u - u^3$ in $\R^n$, then, if $n \leq 8$, $u$ depends only on one Euclidean variable, that is, all its level sets are hyperplanes. This conjecture is not completely closed (see \cite{FarinaValdinoci-DeGiorgi} and references therein) but a counterexample in dimension $n=9$ was build in \cite{delPinoKowalczykWei} by using the so-called gluing method. Saddle-shaped solutions are natural objects to build a counterexample in a simpler way, as explained next. On the one hand, Jerison and Monneau \cite{JerisonMonneau} showed that a counterexample to the conjecture of De Giorgi in $\R^{n+1}$ can be constructed with a rather natural procedure if there exists a global minimizer of $-\Delta u = f(u)$ in $\R^n$ which is bounded and even with respect to each coordinate, but is not one-dimensional. On the other hand, by the $\Gamma$-converge results from Modica and Mortola (see \cite{Modica,ModicaMortola}) and the fact that the Simons cone is the simplest nonplanar minimizing minimal surface, saddle-shaped solutions are expected to be global minimizers of the Allen-Cahn equation in dimensions $2m\geq 8$ (this is still an open problem).

Similar facts happen in the nonlocal setting (see the introduction of \cite{Felipe-Sanz-Perela:SaddleFractional} for further details). For this reason, saddle-shaped solutions are of interest in the study of the nonlocal version of the conjecture of De Giorgi for equation \eqref{Eq:NonlocalAllenCahn}.

Saddle-shaped solutions to the local Allen-Cahn equation involving the Laplacian were studied in \cite{DangFifePeletier, Schatzman, CabreTerraI,CabreTerraII, Cabre-Saddle}. In these works, it is established the existence, uniqueness, and some qualitative properties of this type of solutions, such as their instability when $2m\leq 6$ and their stability if $2m\geq 14$. Stability in dimensions $8, 10$, and $12$ is still an open problem, as well as minimality in dimensions $2m\geq 8$.

In the fractional framework, there are only three works concerning saddle-shaped solutions to the equation $\fraclaplacian u = f(u)$. In \cite{Cinti-Saddle,Cinti-Saddle2}, Cinti proved the existence of saddle-shaped solution as well as some qualitative properties such as their asymptotic behavior, some monotonicity properties, and their instability in low dimensions. In a previous paper by the authors \cite{Felipe-Sanz-Perela:SaddleFractional}, further properties of these solutions have been established, the main ones being uniqueness and, when $2m\geq 14$, stability. The present paper together with its second part \cite{FelipeSanz-Perela:IntegroDifferentialII} are the first ones studying saddle-shaped solutions for general integro-differential equations of the form \eqref{Eq:NonlocalAllenCahn}. In the three previous papers \cite{Cinti-Saddle, Cinti-Saddle2, Felipe-Sanz-Perela:SaddleFractional}, the main tool used is the extension problem for the fractional Laplacian (see \cite{CaffarelliSilvestre}). As mentioned, this technique cannot be carried out for general integro-differential operators different from the fractional Laplacian. Therefore, some purely nonlocal techniques are developed through both papers.

In the forthcoming paper \cite{FelipeSanz-Perela:IntegroDifferentialII}, we study saddle-shaped solutions to \eqref{Eq:NonlocalAllenCahn} in more detail taking advantage of the setting for odd functions built in the present article. We give an alternative proof for the existence of a saddle-shaped solution by using monotone iteration and maximum principle techniques. As in the proof of Theorem~\ref{Th:Existence}, the assumtion \eqref{Eq:KernelInequality} is crucial. Furthermore, we prove the asymptotic behaviour of this type of solutions by using some symmetry and Liouville type results for general integro-differential operators that we establish in the same paper. Finally, we also show in \cite{FelipeSanz-Perela:IntegroDifferentialII} the uniqueness of the saddle-shaped solution through a maximum principle for the linearized operator, which we also prove in that article.

Let us make some final remarks on the minimality and stability properties of the Simons cone. Recall that, in the classical theory of minimal surfaces, it is well known that the Simons cone has zero mean curvature at every point $x\in \ccal \setminus \{0\}$, in all even dimensions, and it is a minimizer of the perimeter functional when $2m\geq 8$. Concerning the nonlocal setting, $\ccal$ has also zero nonlocal mean curvature in all even dimensions, although it is not known if it is a minimizer of the nonlocal perimeter in any dimension. If $2m=2$ it cannot be a minimizer since in \cite{SavinValdinoci-Cones} it is proven that all minimizing nonlocal minimal cones in $\R^2$ are flat. In higher dimensions, the only available results appear in \cite{DaviladelPinoWei, Felipe-Sanz-Perela:SaddleFractional} but concern stability, a weaker property than minimality. In \cite{DaviladelPinoWei},  Dávila, del Pino, and Wei characterize the stability of Lawson cones ---a more general class of cones that includes $\ccal$--- through an inequality involving only two hypergeometric constants which depend only on $\s$ and the dimension $n$. This inequality is checked numerically in \cite{DaviladelPinoWei}, finding that, in dimensions $n \leq 6$ and for $\s$ close to zero, no Lawson cone with zero nonlocal mean curvature is stable. Numerics also shows that all Lawson cones in dimension $7$ are stable if $\s$ is close to zero. These results for small $\s$ fit with the general belief that, in the fractional setting, the Simons cone should be stable (and even a minimizer) in dimensions $2m \geq 8$ (as in the local case), probably for all $\s\in(0,1/2)$, though this is still an open problem. In \cite{Felipe-Sanz-Perela:SaddleFractional}, we proved, by using the saddle-shaped solution to the fractional Allen-Cahn equation and a $\Gamma$-convergence result of \cite{CabreCintiSerra-Stable}, that the Simons cone is a stable $(2\s)$-minimal cone in dimensions $2m\geq 14$. To the best of our knowledge, this is the first analytical proof of a stability result for the Simons cone in any dimension.

This paper is organized as follows. Section~\ref{Sec:OperatorOddF} is devoted to study the operator $L_K$ acting on doubly radial odd functions. We deduce the expression of the kernel $\overline{K}$ and rewrite the operator acting on doubly radial odd functions, finding the expression \eqref{Eq:OperatorOddF}. We also show Theorem~\ref{Th:SufficientNecessaryConditions} and Proposition~\ref{Prop:MaximumPrincipleForOddFunctions}. In Section~\ref{Sec:EnergyForOddF} we study the energy functional associated to \eqref{Eq:NonlocalAllenCahn} and in Section~\ref{Sec:EnergyEstimate} we establish the energy estimate stated in Theorem~\ref{Th:EnergyEstimate}. Finally, in Section~\ref{Sec:Existence} we prove the existence of a saddle-shaped solution to the integro-differential Allen-Cahn equation. At the end of the paper there are three appendices. Appendix~\ref{Sec:AuxiliaryResults} is devoted to some results on convex functions, and Appendix~\ref{Sec:AuxiliaryResults2} contains some auxiliary computations. Both are used in the proof of Theorem~\ref{Th:SufficientNecessaryConditions}. In Appendix~\ref{Sec:stcomputations} we include some results and expressions in $(s,t)$ variables for future reference.

%%%%%%%%%%%%%%%%%%%%%%%%%%%%%%%%%%%%%%%%%%%%%%%%%%%%%%%%%%%%%%%%%%%%%%%%%%%%%%%
%%%%%%%%%%%%%%%%%%%%%%%%%%%%%%%%%%%%%%%%%%%%%%%%%%%%%%%%%%%%%%%%%%%%%%%%%%%%%%%
\section{Rotation invariant operators acting on doubly radial odd functions}
%%%%%%%%%%%%%%%%%%%%%%%%%%%%%%%%%%%%%%%%%%%%%%%%%%%%%%%%%%%%%%%%%%%%%%%%%%%%%%%
%%%%%%%%%%%%%%%%%%%%%%%%%%%%%%%%%%%%%%%%%%%%%%%%%%%%%%%%%%%%%%%%%%%%%%%%%%%%%%%
\label{Sec:OperatorOddF}

This section is devoted to study rotation invariant operators of the class $\lcal_0$ when they act on doubly radial odd functions. First, we deduce an alternative expression for the operator in terms of a doubly radial kernel $\overline{K}$. Then, we present necessary and sufficient conditions on the kernel $K$ in order to \eqref{Eq:KernelInequality} hold (we establish Theorem~\ref{Th:SufficientNecessaryConditions}). Finally, we show two maximum principles for doubly radial odd functions (Proposition~\ref{Prop:MaximumPrincipleForOddFunctions}).

%---------------------------------------------------------------------%
\subsection{Alternative expressions for the operator $L_K$}
%---------------------------------------------------------------------%

The main purpose of this subsection is to deduce an alternative expression for a rotation invariant operator $L_K \in \lcal_0$ acting on doubly radial functions. This expression is more suitable to work with and it will be used throughout the paper. Our first remark is that if $w$ is invariant by $O(m)^2$, the same holds for $L_Kw$. Indeed, for every $R \in O(m)^2$,
\begin{align*}
L_K w (Rx)
& = \int_{\R^{2m}} \{w(Rx) - w(y)\} K(|Rx - y|)  \d y\\
& = \int_{\R^{2m}} \{w(Rx) - w(R\tilde{y})\} K(|Rx - R\tilde{y}|) \d \tilde{y}\\
& = \int_{\R^{2m}} \{w(x) - w(\tilde{y})\} K(|x-\tilde{y}|) \d \tilde{y}\\
& = L_K w (x)\,.
\end{align*}
Here we have used the change $y = R\tilde{y}$ and the fact that $w(R \cdot) = w(\cdot)$ for every $R\in O(m)^2$.

Next, we present an alternative expression for the operator $L_K $ acting on doubly radial functions. This expression involves the new kernel $\overline{K}$, which is also doubly radial.

\begin{lemma} \label{Lemma:AlternativeOperatorExpression}
	Let $L_K \in \lcal_0(2m,\s)$ have a radially symmetric kernel $K$, and let $w$ be a doubly radial function such that $L_K w$ is well-defined. Then, $L_K w$ can be expressed as
	$$
	L_K w(x) = \int_{\R^{2m}} \{w(x) - w(y)\} \overline{K}(x,y) \d y
	$$
	where $\overline{K}$ is symmetric, invariant by $O(m)^2$ in both arguments, and it is defined by
	\begin{equation*}
	\overline{K}(x,y) := \average_{O(m)^2} K(|Rx - y|)\d R\,.
	\end{equation*}
	Here, $\d R$ denotes integration with respect to the Haar measure on $O(m)^2$.
\end{lemma}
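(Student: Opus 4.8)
The plan is to obtain the identity by averaging over $O(m)^2$ the rotation invariance of $L_Kw$ noted just above the statement, and then exchanging this average with the integral defining the operator. Concretely, I would start from $L_Kw(Rx)=L_Kw(x)$ for every $R\in O(m)^2$, integrate in $R$ against the normalized Haar measure, and use that $w$ is doubly radial (so $w(Rx)=w(x)$) to write
$$
L_Kw(x)=\average_{O(m)^2}L_Kw(Rx)\,\d R=\average_{O(m)^2}\left(\int_{\R^{2m}}\{w(x)-w(y)\}K(|Rx-y|)\,\d y\right)\d R .
$$
If the Haar average may be brought inside the $y$-integral, the inner factor turns into $\average_{O(m)^2}K(|Rx-y|)\,\d R=\overline{K}(x,y)$, which is exactly \eqref{Eq:KbarDef'} and gives the claimed formula. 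Thus the whole point is the legitimacy of this interchange.

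The interchange is the only step requiring care, since the $y$-integral is a principal value. I would justify it by the usual truncation: for each fixed $\varepsilon>0$ the integral $\int_{|y-Rx|>\varepsilon}\{w(x)-w(y)\}K(|Rx-y|)\,\d y$ is absolutely convergent (the singularity at $y=Rx$ is excised, $w$ is bounded, and $K$ is integrable at infinity by \eqref{Eq:Symmetry&IntegrabilityOfK}), so Fubini--Tonelli applies on $O(m)^2\times\R^{2m}$ because $O(m)^2$ is compact and carries a probability measure; one then lets $\varepsilon\to0$. The uniform control needed to pass to the limit under $\average_{O(m)^2}$ comes from the fact that $w$, being doubly radial, is constant (equal to $w(x)$) on the whole orbit $\{Rx:R\in O(m)^2\}$: after symmetrizing the truncated integrand in $z\mapsto-z$ (with $z=y-Rx$) and using the evenness of $K$, the bracket becomes $w(x)-\tfrac12\{w(Rx+z)+w(Rx-z)\}=\{w(x)-w(Rx)\}+\tfrac12\{2w(Rx)-w(Rx+z)-w(Rx-z)\}$, whose first term vanishes and whose second term is $O(|z|^2)$ near $z=0$, uniformly in $R$; together with $K(z)\lesssim|z|^{-2m-2\s}$ this gives a bound on the truncated integrals that is uniform in $R$ and $\varepsilon$, and the limit identity follows by dominated/monotone convergence. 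Undoing the truncation on both sides yields $L_Kw(x)=\int_{\R^{2m}}\{w(x)-w(y)\}\overline{K}(x,y)\,\d y$.

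It remains to check the stated properties of $\overline{K}$, which are all consequences of the invariances of the Haar measure on the compact group $O(m)^2$. Invariance in the first argument: for $Q\in O(m)^2$, the substitution $R\mapsto RQ$ (right-invariance) gives $\overline{K}(Qx,y)=\average_{O(m)^2}K(|RQx-y|)\,\d R=\overline{K}(x,y)$. Symmetry: since $K$ is radial and even and $R^{-1}\in O(m)^2$ is an isometry, $K(|Rx-y|)=K(|x-Ry|)=K(|R^{-1}x-y|)$, so integrating in $R$ and using invariance of Haar measure under $R\mapsto R^{-1}$ yields $\overline{K}(x,y)=\overline{K}(y,x)$. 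Invariance in the second argument then follows by combining these two facts (or directly from $|Rx-Qy|=|Q^{-1}Rx-y|$ and the left-invariance $R\mapsto QR$). Finally, averaging the bounds \eqref{Eq:Ellipticity} pointwise shows $\overline{K}$ inherits the same ellipticity constants, although this is not needed for the statement. The single genuine obstacle is the Fubini step of the second paragraph; the rest is bookkeeping.
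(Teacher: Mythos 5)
Your argument is essentially the paper's own proof: average the identity $L_Kw(Rx)=L_Kw(x)$ over the Haar measure, exchange the average with the $y$-integral to produce $\overline{K}$, and obtain the symmetry and $O(m)^2$-invariance of $\overline{K}$ from the inversion and right-invariance of the Haar measure, the only real difference being that you spell out the Fubini/principal-value justification that the paper leaves implicit. One harmless slip: in your symmetry chain the pointwise identity should be $K(|Rx-y|)=K(|x-R^{-1}y|)$ rather than $K(|Rx-y|)=K(|x-Ry|)$, but since you then invoke invariance of the Haar measure under $R\mapsto R^{-1}$, the averaged conclusion $\overline{K}(x,y)=\overline{K}(y,x)$ is unaffected.
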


Recall (see for instance \cite{Nachbin}) that the Haar measure on $O(m)^2$ exists and it is unique up to a
multiplicative constant. Let us state next the properties of this measure that will be used in the rest of the
paper. In the following, the Haar measure is denoted by $\mu$. First, since $O(m)^2$ is a compact
group, it is unimodular (see Chapter~II, Proposition~ 13 of \cite{Nachbin}). As a consequence, the
measure $\mu$ is left and right invariant, that is, $\mu(R\Sigma) = \mu(\Sigma) = \mu(\Sigma R) $
for every subset $\Sigma \subset O(m)^2$ and every $R\in O(m)^2$. Moreover, it holds
\begin{equation}
\label{Eq:Unimodular}
\average_{O(m)^2} g(R^{-1}) \d R = \average_{O(m)^2} g(R) \d R
\end{equation}	
for every $g\in L^1(O(m)^2)$ ---see \cite{Nachbin} for the details.

\begin{proof}[Proof of Lemma~\ref{Lemma:AlternativeOperatorExpression}]
	Since $L_K w (x) = L_K w (Rx)$ for every $R\in O(m)^2$, by taking the mean over all the transformations in $O(m)^2$, we get
	\begin{align*}
	L_K w(x) &= \average_{O(m)^2} L_K w(Rx)\d R =  \average_{O(m)^2} \int_{\R^{2m}} \{w(x) - w(y)\}K(|Rx - y|) \d y \d R\\
	&= \int_{\R^{2m}} \{w(x) - w(y)\}  \average_{O(m)^2} K(|Rx - y|) \d R  \d y \\
	&= \int_{\R^{2m}} \{w(x) - w(y)\}  \overline{K}(x,y) \d y\,.
	\end{align*}
	Now, we show that $\overline{K}$ is symmetric. Using property \eqref{Eq:Unimodular}, we get
	\begin{align*}
	\overline{K}(y,x) &= \average_{O(m)^2} K(|R y - x|)\d R = \average_{O(m)^2} K(|R^{-1} (R y - x)|)\d R \\
	&= \average_{O(m)^2} K(|R^{-1}x-y)|)\d R = \overline{K}(x,y)\,.
	\end{align*}
	It remains to show that
	$\overline{K}$ is invariant by $O(m)^2$ in its two arguments. By the symmetry, it is enough to
	check it for the first one. Let $\tilde{R} \in O(m)^2$. Then,
	$$
	\overline{K} (\tilde{R}x, y) = \average_{O(m)^2} K(|R \tilde{R} x - y|)\d R  = \average_{O(m)^2} K(|R x - y|)\d R = \overline{K} (x, y)\,,
	$$
	where we have used the right invariance of the Haar measure.
\end{proof}

In the following lemma we present some properties of the involution $(\cdot)^\star$ defined by \eqref{Eq:DefStar} and its relation with the doubly radial kernel $\overline{K}$ and the transformations of $O(m)^2$. In particular, in the proof of Theorem~\ref{Th:SufficientNecessaryConditions} it will be useful to consider the following transformation. For every $R\in O(m)^2$, we define  $R_\star\in O(m)^2$ by 
\begin{equation}
\label{Eq:DefRStar}
R_\star := (R(\cdot)^\star)^\star\,.
\end{equation}
Equivalently, if $R = (R_1, R_2)$ with $R_1$, $R_2 \in O(m)$, then $R_\star = (R_2, R_1)$.

\begin{lemma}
	\label{Lemma:PropertiesStar}
	Let $(\cdot)^\star: \R^{2m} \to \R^{2m}$ be the involution defined by $x^\star = (x',x'')^\star = (x'', x')$
	---see \eqref{Eq:DefStar}.
	Then,
	\begin{enumerate}
		\item
		The Haar integral on $O(m)^2$ has the following invariance:
		\begin{equation}
		\label{Eq:InvarianceByStar}
		\int_{O(m)^2} g(R_\star) \d R = \int_{O(m)^2} g(R) \d R \,,
		\end{equation}
		for every $g \in L^1(O(m)^2)$.
		\item $\overline{K}(x^\star,y) = \overline{K} (x,y^\star)$. As a consequence, $\overline{K}(x^\star,y^\star) = \overline{K} (x,y)$.
	\end{enumerate}
\end{lemma}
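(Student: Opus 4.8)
The plan is to prove (1) first, since (2) follows from it by a change of variables together with the fact that $(\cdot)^\star$ is an isometric involution of $\R^{2m}$. For (1), I would observe that the map $\Phi\colon O(m)^2 \to O(m)^2$, $\Phi(R) = R_\star$, is a continuous group automorphism: writing $R=(R_1,R_2)$ and $S=(S_1,S_2)$, one checks $(RS)_\star = (R_2S_2, R_1S_1) = R_\star S_\star$, and $\Phi$ is its own inverse. There are two equally short ways to conclude. The quickest is to recall that the Haar measure $\mu$ on the product group $O(m)^2$ is the product $\mu_0\otimes\mu_0$ of the Haar measure $\mu_0$ on $O(m)$ with itself; since $\Phi$ merely interchanges the two factors and the two factor measures are identical, Tonelli's theorem and a renaming of variables give
\[
\int_{O(m)^2} g(R_\star) \d R = \iint g(R_2,R_1)\,\d\mu_0(R_1)\,\d\mu_0(R_2) = \iint g(R_1,R_2)\,\d\mu_0(R_1)\,\d\mu_0(R_2) = \int_{O(m)^2} g(R) \d R.
\]
Alternatively, one notes that the pushforward $\Phi_*\mu$ is again left-invariant (because $\Phi$ is an automorphism), hence equals $c\mu$ by uniqueness of the Haar measure, and $c=1$ because $O(m)^2$ is compact and $\Phi$ preserves its total mass.

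For (2), the key point is the relation between $(\cdot)^\star$ and $R_\star$: by the very definition \eqref{Eq:DefRStar} one has $(Rx^\star)^\star = R_\star x$, so $Rx^\star = (R_\star x)^\star$. Since $(\cdot)^\star$ is an isometry and an involution, it follows that
\[
|Rx^\star - y| = \big|(R_\star x)^\star - y\big| = |R_\star x - y^\star|.
\]
Substituting this into the definition of $\overline{K}$ and then applying part (1) to change variables $R \mapsto R_\star$ inside the Haar integral yields
\begin{align*}
\overline{K}(x^\star, y) &= \average_{O(m)^2} K(|Rx^\star - y|)\d R = \average_{O(m)^2} K(|R_\star x - y^\star|)\d R \\
&= \average_{O(m)^2} K(|Rx - y^\star|)\d R = \overline{K}(x, y^\star).
\end{align*}
The consequence $\overline{K}(x^\star,y^\star) = \overline{K}(x,y)$ is then immediate: apply the identity just proved with $y$ replaced by $y^\star$ and use $(y^\star)^\star = y$.

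There is no serious obstacle here; the only points requiring a little care are the justification of the change of variables in the Haar integral — which is precisely what part (1) provides — and keeping the bookkeeping of the two involutions, $(\cdot)^\star$ on $\R^{2m}$ and $\Phi$ on $O(m)^2$, consistent with the definition \eqref{Eq:DefRStar}. Everything else is a direct substitution exploiting that $\overline{K}$ is obtained by averaging the rotation-invariant, symmetric kernel $K(|\cdot|)$ over $O(m)^2$.
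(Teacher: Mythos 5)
Your proposal is correct and follows essentially the same route as the paper: part (1) via the product structure of the Haar measure on $O(m)^2$ and Fubini/relabeling of the two factors, and part (2) by combining the fact that $(\cdot)^\star$ is an isometric involution with the identity $(Rx^\star)^\star = R_\star x$ from \eqref{Eq:DefRStar}, followed by the change of variables $R\mapsto R_\star$ justified by part (1). The alternative argument you sketch for (1), via uniqueness of the Haar measure under the automorphism $R\mapsto R_\star$, is a valid minor variant but does not change the substance.
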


\begin{proof}
	The first statement is easy to check by using Fubini:
	\begin{align*}
	\int_{O(m)^2} g(R_\star) \d R & = \int_{O(m)} \!\! \d R_1 \int_{O(m)} \!\! \d R_2 \ \ g(R_2, R_1)  =  \int_{O(m)} \!\! \d R_2 \int_{O(m)} \!\! \d R_1 \ \ g(R_2, R_1) \\
	& =  \int_{O(m)} \!\! \d R_1 \int_{O(m)} \!\! \d R_2 \ \ g(R_1, R_2)  =  \int_{O(m)^2} g(R) \d R\,.
	\end{align*}
	
	To show the second statement, we use the definition of $R_\star$ and \eqref{Eq:InvarianceByStar}
	to see that
	\begin{align*}
	\overline{K}(x^\star,y) &= \average_{O(m)^2} K(|Rx^\star - y|) \d R = \average_{O(m)^2} K(|(Rx^\star - y)^\star|) \d R \\
	&= \average_{O(m)^2} K(|(R x^\star)^\star - y^\star|) \d R = \average_{O(m)^2} K(|R_\star x - y^\star|) \d R \\
	&= \average_{O(m)^2} K(|Rx - y^\star|) \d R = \overline{K}(x,y^\star)\,.
	\end{align*}
	As a consequence, we have that $\overline{K}(x^\star,y^\star) = \overline{K}(x,(y^\star)^\star) = \overline{K}(x,y)$.
\end{proof}

To conclude this subsection, we present two alternative expressions for the operator $L_K$ when it acts on doubly radial odd functions. These expressions are suitable in the rest of the paper and also in the forthcoming one \cite{FelipeSanz-Perela:IntegroDifferentialII}, since the integrals appearing in the expression are computed only in $\ocal$, and this is important to prove maximum principle and other properties.

\begin{lemma}
	\label{Lemma:OperatorOddF}
	Let $w$ be a doubly radial function which is odd with respect to the Simons cone. Let $L_K \in \lcal_0(2m,\s,\lambda, \Lambda)$ be a rotation invariant operator and let $L_K^\ocal$ be defined by \eqref{Eq:OperatorOddF}. 
	
	Then, 
	\begin{align*}
	L_K w (x) &= \int_{\ocal} \{w(x) - w(y) \} \overline{K}(x, y) \d y +  \int_{\ocal} \{w(x) + w(y) \} \overline{K}(x, y^\star) \d y \\
	&= \int_{\ocal} \{w(x) - w(y) \} \{\overline{K}(x, y) - \overline{K}(x, y^\star)  \} \d y +  2 w(x) \int_{\ocal} \overline{K}(x, y^\star) \d y \,.
	\end{align*}
	In particular, the second equality shows that $L_K w (x) = L_K^\ocal w(x)$. 	Moreover,
	\begin{equation}
	\label{Eq:ZeroOrderTerm}
	\dfrac{1}{C} \dist(x,\ccal)^{-2\s} \leq \int_{\ocal} \overline{K}(x, y^\star) \d y \leq C \dist(x,\ccal)^{-2\s},
	\end{equation}
	where $C>0$ is a constant depending only on $m, \s, \lambda$, and $\Lambda$.
\end{lemma}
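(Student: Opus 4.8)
The first two displayed equalities are essentially algebraic manipulations of the expression provided by Lemma~\ref{Lemma:AlternativeOperatorExpression}. My plan is to start from $L_K w(x) = \int_{\R^{2m}} \{w(x) - w(y)\}\overline{K}(x,y)\d y$ and split the domain of integration as $\R^{2m} = \ocal \cup \ical \cup \ccal$, noting that $\ccal$ has zero Lebesgue measure and so contributes nothing. On the piece over $\ical$, I would perform the change of variables $y = z^\star$, which is a measure-preserving involution (being an isometry) mapping $\ical$ onto $\ocal$. Under this change, $\overline{K}(x,y) = \overline{K}(x,z^\star)$ and, crucially, since $w$ is odd with respect to the Simons cone, $w(y) = w(z^\star) = -w(z)$. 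Substituting, the $\ical$-integral becomes $\int_{\ocal} \{w(x) + w(z)\}\overline{K}(x,z^\star)\d z$, which together with the $\ocal$-integral gives the first equality. The second equality follows by simply regrouping terms: $\{w(x)-w(y)\}\overline{K}(x,y) + \{w(x)+w(y)\}\overline{K}(x,y^\star) = \{w(x)-w(y)\}\{\overline{K}(x,y)-\overline{K}(x,y^\star)\} + 2w(x)\overline{K}(x,y^\star)$. The identification $L_K w = L_K^\ocal w$ is then immediate by comparison with the definition \eqref{Eq:OperatorOddF}. One point requiring a little care is the principal value sense of these integrals near $y = x$; I would note that the singularity only occurs in the $\overline{K}(x,y)$ term (since $x^\star \notin \ocal$ for $x \in \ocal$, the term $\overline{K}(x,y^\star)$ is bounded near $y=x$), so the P.V. is inherited from the original one and the splitting is legitimate.

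The substantive part of the lemma is the two-sided bound \eqref{Eq:ZeroOrderTerm} on the zero-order coefficient $\int_{\ocal}\overline{K}(x,y^\star)\d y$. By the substitution $y \mapsto y^\star$ again, this integral equals $\int_{\ical}\overline{K}(x,y)\d y$, so I need to show $\int_{\ical}\overline{K}(x,y)\d y \asymp \dist(x,\ccal)^{-2\s}$ for $x \in \ocal$. The upper bound is the easier direction: using the ellipticity $\overline{K}(x,y) \le \Lambda c_{n,\s}\average_{O(m)^2}|Rx-y|^{-n-2\s}\d R$ and the fact that for $x \in \ocal$, $y \in \ical$ one has $|Rx - y| \ge \dist(x,\ccal)$ for every $R \in O(m)^2$ (because $Rx$ still lies in $\ocal$ — recall $\ocal$ is a set of double revolution — while $y \in \ical$, and the two components are separated by $\ccal$, so any segment joining them crosses $\ccal$; more precisely $|Rx - y| \ge \dist(Rx, \ccal) = \dist(x,\ccal)$). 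Then $\int_{\ical}\overline{K}(x,y)\d y \le \Lambda c_{n,\s}\average_{O(m)^2}\int_{\{|z| \ge \dist(x,\ccal)\}}|z|^{-n-2\s}\d z\, \d R = C\dist(x,\ccal)^{-2\s}$, where the inner integral is evaluated by passing to polar coordinates.

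For the lower bound, the idea is to restrict the integration to a well-chosen subset of $\ical$ on which $\overline{K}(x,y)$ is quantitatively large. Write $d = \dist(x,\ccal)$. The point $x^\star$ lies in $\ical$ at distance $|x - x^\star| = \sqrt{2}\,d$ from $x$ (a short computation, since $x^\star - x = (x''-x', x'-x'')$ and $|x'-x''| = \sqrt{2}\,d$... actually $\dist(x,\ccal)$ in terms of $s=|x'|, t=|x''|$ equals $(s-t)/\sqrt{2}$, so $|x-x^\star|^2 = 2(s-t)^2 = 4d^2$, i.e. $|x-x^\star| = 2d$). I would take the ball $B = B_{d/C_0}(x^\star)$ for a suitable dimensional constant $C_0$; this ball is contained in $\ical$ and every point $y \in B$ satisfies $|x - y| \le 3d$ (say). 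Using the lower ellipticity bound, for $y \in B$,
\[
\overline{K}(x,y) = \average_{O(m)^2} K(|Rx-y|)\d R \ge \lambda c_{n,\s}\average_{O(m)^2}|Rx-y|^{-n-2\s}\d R \ge \lambda c_{n,\s}(C d)^{-n-2\s},
\]
provided $|Rx - y| \le Cd$ for all $R$; this needs $\diam$-type control, namely $|Rx - y| \le |Rx| + |y|$, but that is not bounded — so instead I would use that $\overline{K}(x,y) \ge \lambda c_{n,\s}\average_{O(m)^2}|Rx-y|^{-n-2\s}\d R$ and bound the average \emph{below} by restricting to the (positive-Haar-measure) set of $R$ near the identity for which $|Rx - y| \le Cd$. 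Then $\int_{\ical}\overline{K}(x,y)\d y \ge \int_B \overline{K}(x,y)\d y \ge c\, d^{-n-2\s}\cdot |B| = c\,d^{-n-2\s}\cdot c_m d^{n} = c\, d^{-2\s}$, with $n = 2m$. I expect the main obstacle to be the careful bookkeeping in this lower bound — specifically, verifying that a fixed-proportion (in Haar measure) subset of rotations $R$ keeps $Rx$ within distance $Cd$ of the target ball $B$, uniformly in $x \in \ocal$; this uses that $Rx$ ranges over the torus $\{|(\,\cdot\,)'| = s\} \times \{|(\,\cdot\,)''| = t\}$ and one must locate $x^\star$ (which has radii $t$ and $s$) relative to it. Since $|x| = |x^\star|$, the orbit of $x$ under $O(m)^2$ and the orbit of $x^\star$ are the \emph{same} torus, so in fact $x^\star = R_0 x$ for some $R_0 \in O(m)^2$; hence for $R$ in a neighborhood of $R_0$ one gets $|Rx - x^\star|$ small, and the average is controlled from below. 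This last observation makes the lower bound clean.
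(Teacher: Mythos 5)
The first half of your proposal (the two identities, the identification $L_K w = L_K^\ocal w$, and the upper bound in \eqref{Eq:ZeroOrderTerm}) is correct and essentially the paper's argument. The genuine gap is in your lower bound, and it comes from keeping the rotation average inside the pointwise estimate. First, two of your auxiliary claims are false for $m\geq 2$: $|x-x^\star|=2\dist(x,\ccal)$ fails because $|x'-x''|\neq |x'|-|x''|$ unless $x'$ and $x''$ are parallel (so $x^\star$ may be at distance comparable to $|x|$ from $x$, not to $\dist(x,\ccal)$); and $x^\star$ is in general \emph{not} of the form $R_0x$ with $R_0\in O(m)^2$, since the orbit of $x$ is the torus $\{|\cdot'|=|x'|\}\times\{|\cdot''|=|x''|\}$ while $x^\star$ has the radii swapped (the orbit only comes within distance $2\dist(x,\ccal)$ of $x^\star$). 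Second, and more seriously, even after replacing $R_0x$ by the nearest orbit point, the set of rotations $R$ with $|Rx-y|\leq C\dist(x,\ccal)$ for $y$ near $x^\star$ does \emph{not} have Haar measure bounded below by a universal constant: writing $d=\dist(x,\ccal)$, $s=|x'|$, $t=|x''|$, its measure is of order $(d/s)^{m-1}(d/t)^{m-1}$, which degenerates as $d/s,\,d/t\to 0$. Your scheme therefore only yields $\int_{\ocal}\overline{K}(x,y^\star)\d y\gtrsim d^{-2\s}(d/s)^{m-1}(d/t)^{m-1}$, which is not the claimed bound (the argument does happen to work when $m=1$, where the Haar measure is discrete).

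The paper avoids this by removing the rotation average \emph{before} estimating: since $\ocal$ and $\ical$ are invariant under $O(m)^2$, Fubini and the change of variables $y\mapsto Ry$ give $\int_{\ocal}\overline{K}(x,y^\star)\d y=\int_{\ical}\overline{K}(x,y)\d y=\int_{\ical}K(|x-y|)\d y$, so one is left with the original kernel. Then one takes $\overline{x}\in\ccal$ realizing $\dist(x,\ccal)$ and integrates over $B_{\dist(x,\ccal)}(\overline{x})\cap\ical$, where $|x-y|\leq 2\dist(x,\ccal)$; using the lower ellipticity bound together with the fact that $|B_R(z)\cap\ical|=\tfrac12|B_R|$ for $z\in\ccal$, this set has measure $c\,\dist(x,\ccal)^{2m}$ and the lower bound $c\,\dist(x,\ccal)^{-2\s}$ follows. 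In short: integrate near the closest point of the cone (where $K(|x-y|)$ itself is large on a set of full proportion), not near $x^\star$, and use the invariance of $\ical$ to discard the average over $O(m)^2$.
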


\begin{proof}
	The first statement is just a computation. Indeed,  using the change of variables  $\bar{y} = y^\star$ and the odd symmetry of $w$, we see that
	\begin{align*}
	\int_{\ical}  \{w(x) - w(y) \} \overline{K}(x, y)\d y &= \int_{\ocal} \{w(x) - w(y^\star) \}\overline{K}(x, y^\star)\d y \\
	&= \int_{\ocal} \{w(x) + w(y) \}\overline{K}(x, y^\star)\d y\,.
	\end{align*}
	Hence,
	\begin{align*}
	L_K w (x) &= \int_{\R^{2m}}  \{w(x) - w(y) \} \overline{K}(x, y)\d y \\
	&= \int_{\ocal}  \{w(x) - w(y) \} \overline{K}(x, y)\d y +\int_{\ical}  \{w(x) - w(y) \} \overline{K}(x, y)\d y \\
	&= \int_{\ocal} \{w(x) - w(y) \} \overline{K}(x, y) \d y +  \int_{\ocal} \{w(x) + w(y) \} \overline{K}(x, y^\star) \d y \,.
	\end{align*}
	By adding and subtracting $w(x)\overline{K}(x, y^\star)$ in the last integrand, we immediately deduce
	$$
	L_K w (x) =  \int_{\ocal} \{w(x) - w(y) \} \{\overline{K}(x, y) - \overline{K}(x, y^\star)  \} \d y +  2 w(x) \int_{\ocal} \overline{K}(x, y^\star) \d y\,.
	$$
	Note that we can add and subtract the term $w(x)\overline{K}(x, y^\star)$  since it is integrable with respect to $y$ in $\ocal$. This is a consequence of \eqref{Eq:ZeroOrderTerm}.
	
	Let us show now \eqref{Eq:ZeroOrderTerm}. In the following arguments we will use the letters $C$ and $c$ to denote positive constants, depending only on $m, \s, \lambda$, and $\Lambda$, that may change their values in each inequality. 
	
	On the one hand, for the upper bound in \eqref{Eq:ZeroOrderTerm} we only need to use the ellipticity of the kernel and the inclusion $\ical \subset \{y\in\R^{2m}:|x-y|\geq \dist(x,\ccal)\}$ for every $x\in \ocal$. Indeed,
	\begin{align*}
	\int_{\ocal} \overline{K}(x, y^\star) \d y &=  \int_{\ocal} K(|x-y^\star|) \d y = \int_{\ical} K(|x-y|) \d y \leq \int_{|x-y|\geq \dist(x,\ccal)} K(|x-y|) \d y \\
	&\leq C \int_{|x-y|\geq \dist(x,\ccal)} |x-y|^{-2m-2\s} \d y = C \int_{\dist(x,\ccal)}^\infty \rho^{-1-2s} \d \rho \\
	&= C \dist(x,\ccal)^{-2s}\,.
	\end{align*}
	
	On the other hand, for the lower bound in \eqref{Eq:ZeroOrderTerm}, let $\overline{x}\in \ccal$ be such that $|x-\overline{x}|=\dist(x,\ccal)$. Then, given $y\in B_{\dist(x,\ccal)}(\overline{x})$, it is clear that $|x-y|\leq |x-\overline{x}|+|\overline{x}-y|\leq 2\dist(x,\ccal)$. Therefore, we have
	\begin{align*}
	\int_{\ocal} \overline{K}(x, y^\star) \d y &=  \int_{\ical} K(|x-y|) \d y \geq c \int_{\ical} |x-y|^{-2m-2\s} \d y \\
	&\geq c \int_{B_{\dist(x,\ccal)}(\overline{x})\cap \ical} |x-y|^{-2m-2\s} \d y\\
	&\geq c \, (2\dist(x,\ccal))^{-2m-2\s} |B_{\dist(x,\ccal)}(\overline{x})\cap \ical| = c \,\dist(x,\ccal)^{-2\s}.
	\end{align*}
	Here we have used a property of the Simons cone: $|B_R(z)\cap \ical|=1/2|B_R|$ for every $z\in \ccal$ (see Lemma 2.5 in \cite{Felipe-Sanz-Perela:SaddleFractional} for the proof).
\end{proof}

%---------------------------------------------------------------------%
\subsection{Necessary and sufficient conditions for ellipticity}
%---------------------------------------------------------------------%

In this subsection, we establish Theorem~\ref{Th:SufficientNecessaryConditions}. As we have mentioned in the introduction, the kernel inequality \eqref{Eq:KernelInequality} is crucial in the rest of the results of this paper, as well as in the ones in \cite{FelipeSanz-Perela:IntegroDifferentialII}. We will see in the next subsection that this inequality guarantees that the operator $L_K$ has a maximum principle for odd functions (see Proposition~\ref{Prop:MaximumPrincipleForOddFunctions}).

First, we give a sufficient condition on a radially symmetric kernel $K$ so that $\overline{K}$ satisfies \eqref{Eq:KernelInequality}. It is the following result.

\begin{proposition}
	\label{Prop:KernelInequalitySufficientCondition} 
	Let $K:(0,+\infty) \to \R$ define a positive radially symmetric kernel $K(|x-y|)$ in $\R^{2m}$. Define $\overline{K} : \R^{2m}\times \R^{2m} \to \R$ by \eqref{Eq:KbarDef'}. Assume that $K(\sqrt{\cdot})$ is strictly convex in $(0,+\infty)$. Then, the associated kernel $\overline{K}$ satisfies
	\begin{equation}
	\label{Eq:KernelInequalityBis}
	\overline{K}(x,y) > \overline{K}(x, y^\star) \quad \text{ for every }x,y \in \ocal\,.
	\end{equation}
\end{proposition}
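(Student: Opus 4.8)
The plan is to compute $\overline{K}(x,y)-\overline{K}(x,y^\star)$ directly from the definition \eqref{Eq:KbarDef'} and show the integrand is strictly positive on average. Writing $x=(x',x'')$ with $s=|x'|>|x''|=t$ (since $x\in\ocal$) and $y=(y',y'')$ with $|y'|=\sigma$, $|y''|=\tau$, the averaging over $O(m)^2$ reduces $\overline K(x,y)$ to a double average over the two spheres $\Sph^{m-1}$ of $K\big(\sqrt{s^2+\sigma^2-2s\sigma\omega_1+t^2+\tau^2-2t\tau\widetilde\omega_1}\big)$, where $\omega_1,\widetilde\omega_1\in[-1,1]$ are the relevant cosines; replacing $y$ by $y^\star$ swaps $(\sigma,\tau)$, equivalently (after relabeling the dummy variables $\omega\leftrightarrow\widetilde\omega$) it amounts to swapping the roles of $s$ and $t$ inside the radicand. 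So the difference is controlled by comparing, for fixed $\omega_1,\widetilde\omega_1$,
$$
K\Big(\sqrt{a+b}\Big)+K\Big(\sqrt{a^\star+b^\star}\Big)\ \text{ versus }\ K\Big(\sqrt{a^\star+b}\Big)+K\Big(\sqrt{a+b^\star}\Big),
$$
where $a=s^2+\sigma^2-2s\sigma\omega_1$, $a^\star=t^2+\sigma^2-2t\sigma\omega_1$ (and similarly $b,b^\star$ with $\tau$ in place of $\sigma$). The key point is that since $s>t$ one has $a>a^\star$ and $b>b^\star$, so $\{a+b,a^\star+b^\star\}$ and $\{a^\star+b,a+b^\star\}$ have the same sum but the first pair is "more spread out"; hence if $g(\tau):=K(\sqrt\tau)$ is strictly convex, the first sum strictly exceeds the second. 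This is exactly the elementary convexity fact I would isolate as Proposition~\ref{Prop:EquivalenceK(sqrt)Convex<->Inequality} in Appendix~\ref{Sec:AuxiliaryResults}.

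More concretely, I would first record the reduction: using Lemma~\ref{Lemma:AlternativeOperatorExpression} and Lemma~\ref{Lemma:PropertiesStar}(2), and the fact that both $x$ and $y$ can be placed in a normalized position by $O(m)^2$-invariance, write
$$
\overline K(x,y)-\overline K(x,y^\star)=\average_{\Sph^{m-1}}\average_{\Sph^{m-1}}\Big\{K\big(\sqrt{a+b}\big)-K\big(\sqrt{a^\star+b}\big)\Big\}\d\omega\,\d\widetilde\omega,
$$
and then symmetrize over the reflection that swaps $\sigma\leftrightarrow\tau$ in the second sphere variable, or alternatively directly symmetrize the pair structure, to bring it to the four-term form above. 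The partition of the domain into four regions alluded to after Theorem~\ref{Th:SufficientNecessaryConditions} (display \eqref{Eq:DefQ}) is what makes this pairing legitimate: one groups the contribution of a point $(\omega_1,\widetilde\omega_1)$ with that of its partner so that the four radicands $a+b$, $a^\star+b^\star$, $a^\star+b$, $a+b^\star$ all appear with the right signs. After that grouping, the integrand is pointwise $\geq 0$ by convexity of $g$, and strictly positive on a set of positive measure because $s>t$ forces $a>a^\star$ strictly for $\omega_1$ away from the degenerate values, so strict convexity gives strict inequality there.

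The main obstacle, I expect, is the bookkeeping in the reduction to the four radicands and verifying that the region decomposition genuinely pairs up points so that every term $K(\sqrt{\cdot})$ is accounted for with the correct sign — in particular handling the ranges of $\omega_1,\widetilde\omega_1$ and checking that the "spread-out" inequality $\{a+b,a^\star+b^\star\}\succ\{a^\star+b,a+b^\star\}$ (same sum, componentwise more extreme) is the one that convexity actually exploits, together with confirming that the set where the inequality is strict has positive measure. The convexity step itself is soft: if $p+q=p'+q'$ with $\min(p,q)<\min(p',q')\le\max(p',q')<\max(p,q)$, then $g(p)+g(q)>g(p')+g(q')$ for strictly convex $g$, which is the standard majorization/Hardy–Littlewood–Pólya inequality for two points and is proved in the appendix. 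Once the pairing is set up correctly, applying this fact inside the sphere averages and concluding strict positivity of the $y$-integral is routine; the delicate part is purely the geometric/combinatorial setup of the pairing.
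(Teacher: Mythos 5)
Your reduction to sphere averages is fine, but the pairing at the heart of your argument is set up incorrectly, and the displayed comparison is both irrelevant and false. With your notation $a=s^2+\sigma^2-2s\sigma\omega_1$, $b=t^2+\tau^2-2t\tau\widetilde\omega_1$, $a^\star=t^2+\sigma^2-2t\sigma\omega_1$, $b^\star=s^2+\tau^2-2s\tau\widetilde\omega_1$, the radicand $a+b$ does come from $\overline K(x,y)$ and $a^\star+b^\star$ (after relabelling $\omega\leftrightarrow\widetilde\omega$) from $\overline K(x,y^\star)$; but $a^\star+b=2t^2+\sigma^2+\tau^2-2t\sigma\omega_1-2t\tau\widetilde\omega_1$ and $a+b^\star=2s^2+\sigma^2+\tau^2-2s\sigma\omega_1-2s\tau\widetilde\omega_1$ occur in \emph{neither} kernel. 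So your comparison places one term of each kernel on the left and two extraneous terms on the right; even if it held, it would say nothing about $\overline K(x,y)$ versus $\overline K(x,y^\star)$. Moreover it fails: at $\omega_1=\widetilde\omega_1=0$ one has $a+b=a^\star+b^\star=s^2+t^2+\sigma^2+\tau^2$, while $\{a^\star+b,\,a+b^\star\}=\{2t^2+\sigma^2+\tau^2,\,2s^2+\sigma^2+\tau^2\}$ has the same sum but is strictly more spread out, so strict convexity of $K(\sqrt{\cdot})$ gives the \emph{reversed} strict inequality there. The supporting claims are also wrong: $a-a^\star=(s-t)(s+t-2\sigma\omega_1)$ need not be positive, and $b-b^\star=(t-s)(s+t-2\tau\widetilde\omega_1)$ is typically negative, so neither "$a>a^\star$, $b>b^\star$" nor the "more spread out with equal sums" structure is available.

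The grouping that actually works (and is the content of the paper's proof) is different: for each rotation one pairs the four rotations $\pm R$, $\pm R_\star$, not the swap $\sigma\leftrightarrow\tau$ against itself. On the region $Q_1$ of \eqref{Eq:DefQ}, with $\alpha=e\cdot R_1e$, $\beta=e\cdot R_2e$, the radicands contributing to $\overline K(x,y)$ are $|x|^2+|y|^2\pm 2A$ and $|x|^2+|y|^2\pm 2D$, and those contributing to $\overline K(x,y^\star)$ are $|x|^2+|y|^2\pm 2B$ and $|x|^2+|y|^2\pm 2C$, where $A=s\sigma\alpha+t\tau\beta$, $B=s\tau\alpha+t\sigma\beta$, $C=t\sigma\alpha+s\tau\beta$, $D=t\tau\alpha+s\sigma\beta$. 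Symmetrizing the $\pm$ pairs into the even function $g(\tau)=K\bigl(\sqrt{|x|^2+|y|^2+2\tau}\bigr)+K\bigl(\sqrt{|x|^2+|y|^2-2\tau}\bigr)$, the inequality to prove is $g(|A|)+g(|D|)\geq g(|B|)+g(|C|)$, where the sums are \emph{not} equal: one only has $|A|=\max\{|A|,|B|,|C|,|D|\}$ and $|A|+|D|\geq|B|+|C|$ (Lemma~\ref{Lemma:ComputationABCD}). Hence the two-point equal-sum majorization you invoke is not the right lemma; what is needed is Proposition~\ref{Prop:EquivalenceK(sqrt)Convex<->Inequality}, which uses that $g$ is convex \emph{and} nondecreasing on the relevant half-interval (this monotonicity comes precisely from the $\pm$ symmetrization), and whose rigidity statement, combined with part (2) of Lemma~\ref{Lemma:ComputationABCD}, gives the strictness. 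So the gap is not bookkeeping: the pairing compares the wrong four quantities, and the convexity input is a genuinely different statement.
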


\begin{proof}
	Since $\overline{K}$ is invariant by $O(m)^2$, it is enough to choose a unitary vector $e \in \Sph^{m-1}$ and show \eqref{Eq:KernelInequalityBis} for points $x, y\in \ocal$ of the form $x = (|x'|e, |x''|e)$ and $y = (|y'|e, |y''|e)$.
	
	Now, define
	\begin{equation}
	\label{Eq:DefQ}
	\begin{split}
	Q_1 &:= \setcond{R = (R_1,R_2) \in O(m)^2}{e\cdot R_1 e > |e\cdot R_2 e|},\\
	Q_2 &:= \setcond{R = (R_1,R_2) \in O(m)^2}{e\cdot R_2 e > |e\cdot R_1 e|} = (Q_1)_\star,\\
	Q_3 &:= \setcond{R = (R_1,R_2) \in O(m)^2}{e\cdot R_1 e < -|e\cdot R_2 e|} = -Q_1,\\
	Q_4 &:= \setcond{R = (R_1,R_2) \in O(m)^2}{e\cdot R_2 e < - |e\cdot R_1 e|} = -(Q_1)_\star.
	\end{split}
	\end{equation}
	Recall that given $R=(R_1,R_2)\in O(m)^2$, then $R_\star=(R_2,R_1)\in O(m)^2$ ---see \eqref{Eq:DefRStar}. Moreover, note that the sets $Q_i$ are disjoint, have the same measure and cover all $O(m)^2$ up to a set of measure zero. 
	
	Therefore,
	\begin{align*}
	4\overline{K} (x, y) &= 4\average_{O(m)^2} K(|x - R y|)\d R \\
	& = \average_{Q_1} K(|x - R y|)\d R + \average_{Q_2} K(|x - R y|)\d R \\
	& \quad \quad
	+ \average_{Q_3} K(|x - R y|)\d R +
	\average_{Q_4} K(|x - R y|)\d R \\
	&= \average_{Q_1} \{K(|x - R y|) + K(|x + R y|) \\
	&\quad \quad + K(|x - R_\star y|) + K(|x + R_\star y|)\}\d R
	\end{align*}
	and
	\begin{align*}
	4\overline{K} (x, y^\star) &= 4\average_{O(m)^2} K(|x - R y^\star|)\d R \\
	& = \average_{Q_1} \{K(|x - R y^\star|) + K(|x + R y^\star|) \\
	&\quad \quad + K(|x - R_\star y^\star|) + K(|x + R_\star y^\star|)\}\d R.
	\end{align*}
	Thus, if we prove
	\begin{equation}
	\label{Eq:InequalityIntegrandKernelInequalityProof}
	\begin{split}
	K(|x - R y|) + K(|x + R y|) + K(|x - R_\star y|) + K(|x + R_\star y|)
	\quad \quad \quad \quad 
	\\
	\geq
	K(|x - R y^\star|) + K(|x + R y^\star|)+K(|x - R_\star y^\star|) + K(|x + R_\star y^\star|)\,,
	\end{split}
	\end{equation}
	for every $R\in Q_1$, we immediately deduce \eqref{Eq:KernelInequalityBis} with a non strict inequality. To see that it is indeed a strict one, we will show that the inequality in \eqref{Eq:InequalityIntegrandKernelInequalityProof} is strict for every $R \in Q_1$.

	For a short notation, we call
	\begin{equation}
	\label{Eq:DefAlphaBeta}
	\alpha := e \cdot R_1 e  \quad \text{ and } \quad \beta := e \cdot R_2 e\,.
	\end{equation}
	Now, note that since  $x = (|x'|e, |x''|e)$ and $y = (|y'|e, |y''|e)$, we have
	\begin{align*}
	|x \pm Ry|^2&= |x' \pm R_1y'|^2 + |x'' \pm R_2y''|^2 \\
	&= |x'|^2 + |y'|^2 \pm 2 x'\cdot R_1 y' +  |x''|^2 + |y''|^2 \pm 2 x''\cdot R_2 y''\\
	&= |x|^2 + |y|^2 \pm 2 |x'||y'| \alpha \pm 2 |x''||y''| \beta.
	\end{align*}
	Similarly,
	$$
	|x \pm R_\star y|^2 =  |x|^2 + |y|^2 \pm 2 |x'||y'| \beta \pm 2 |x''||y''|\alpha,
	$$
	$$
	|x \pm R y^\star|^2 =  |x|^2 + |y|^2 \pm 2 |x'||y''| \alpha \pm 2 |x''||y'|\beta,
	$$
	and
	$$
	|x \pm R_\star y^\star|^2 = |x|^2 + |y|^2 \pm 2 |x'||y''| \beta \pm 2 |x''||y'| \alpha.
	$$
	
	We define now
	$$
	g(\tau) := K \bpar{\sqrt{|x|^2 + |y|^2 + 2 \tau }} + K \bpar{\sqrt{|x|^2 + |y|^2 - 2 \tau}}.
	$$
	Thus, proving \eqref{Eq:InequalityIntegrandKernelInequalityProof} is equivalent to show that, for every $\alpha$, $\beta \in [-1,1]$ such that $\alpha > |\beta|$, it holds
	\begin{equation}
	\label{Eq:InequalityIntegrandKernelInequalityProof2}
	\begin{split}
	g\Big(|x'||y'| \alpha + |x''||y''| \beta \Big)
	+ g\Big(|x'||y'| \beta + |x''||y''| \alpha \Big) \hspace{2cm}
	\\ \geq
	g\Big(|x'||y''| \alpha + |x''||y'|\beta \Big)
	+ g\Big(|x'||y''| \beta + |x''||y'| \alpha \Big)\,.
	\end{split}
	\end{equation}
	
	Let
	$$
	\begin{array}{cc}
	A_{\alpha,\beta} := |x'||y'|  \alpha + |x''||y''|\beta \,, &
	B_{\alpha,\beta} := |x'||y''| \alpha + |x''||y'| \beta \,, \\
	C_{\alpha,\beta} := |x''||y'| \alpha + |x'||y''| \beta \,, &
	D_{\alpha,\beta} := |x''||y''|\alpha + |x'||y'|  \beta \,.
	\end{array}
	$$
	With this notation and taking into account that $g$ is even,
	\eqref{Eq:InequalityIntegrandKernelInequalityProof2} is equivalent to
	\begin{equation}
	\label{Eq:InequalityIntegrandKernelInequalityProof3}
	g(|A_{\alpha,\beta}|) + g(|D_{\alpha,\beta}|) \geq g(|C_{\alpha,\beta}|) + g(|B_{\alpha,\beta}|)\,,
	\end{equation}
	for every $\alpha$, $\beta \in [-1,1]$ such that $\alpha > |\beta|$. Note that $g$ is defined in the open interval $I = (-(|x|^2 + |y|^2)/2,\ (|x|^2 + |y|^2)/2)$ and that $A_{\alpha,\beta}$, $B_{\alpha,\beta}$, $C_{\alpha,\beta}$, $D_{\alpha,\beta} \in I$.
	
	To show \eqref{Eq:InequalityIntegrandKernelInequalityProof3}, we use Proposition~\ref{Prop:EquivalenceK(sqrt)Convex<->Inequality} of the Appendix~\ref{Sec:AuxiliaryResults}. There, it is stated that in order to establish \eqref{Eq:InequalityIntegrandKernelInequalityProof3} it is enough to check that
	$$
	\begin{cases}
	|A_{\alpha,\beta}| \geq |B_{\alpha,\beta}|,\ \ |A_{\alpha,\beta}| \geq |C_{\alpha,\beta}|, \ \ |A_{\alpha,\beta}| \geq |D_{\alpha,\beta}|\,, \\
	|A_{\alpha,\beta}| + |D_{\alpha,\beta}| \geq |B_{\alpha,\beta}| + |C_{\alpha,\beta}|\,.
	\end{cases}
	$$
	The verification of these inequalities is a simple but tedious computation and it is presented in Appendix~\ref{Sec:AuxiliaryResults2} ---see point (1) of Lemma~\ref{Lemma:ComputationABCD}. Once this is proved, we deduce \eqref{Eq:InequalityIntegrandKernelInequalityProof3} by Proposition~\ref{Prop:EquivalenceK(sqrt)Convex<->Inequality}.
	
	To finish, we see that the inequality in \eqref{Eq:InequalityIntegrandKernelInequalityProof3} is always strict for every $\alpha$, $\beta \in [-1,1]$ such that $\alpha > |\beta|$ (that corresponds to $Q_1$). By contradiction, assume that equality holds in \eqref{Eq:InequalityIntegrandKernelInequalityProof3}. Thus, by Proposition~\ref{Prop:EquivalenceK(sqrt)Convex<->Inequality}, if follows that the sets $\{|A_{\alpha,\beta}|, |D_{\alpha,\beta}|\} $ and $\{|B_{\alpha,\beta}|,|C_{\alpha,\beta}|\}$ coincide. This fact and point (2) of Lemma~\ref{Lemma:ComputationABCD} yield $\alpha = \beta = 0$, a contradiction. Thus, the inequality in \eqref{Eq:InequalityIntegrandKernelInequalityProof3} is strict, as well as the inequality in \eqref{Eq:InequalityIntegrandKernelInequalityProof}. This leads to \eqref{Eq:KernelInequalityBis}.
\end{proof}

Now, we give a necessary condition on the kernel $K$ so that inequality \eqref{Eq:KernelInequality} holds.

\begin{proposition}
	\label{Prop:KernelInequalityNecessaryCondition} Let $K:(0,+\infty) \to \R$ define a positive radially symmetric kernel $K(|x-y|)$ in $\R^{2m}$. Define $\overline{K} : \R^{2m}\times \R^{2m} \to \R$ by \eqref{Eq:KbarDef'}. 
	
	If
	\begin{equation}
	\label{Eq:KernelInequalityAE}
	\overline{K}(x,y) > \overline{K}(x, y^\star) \quad \text{ for almost every }x,y \in \mathcal{O}\,,
	\end{equation}
	then $K(\sqrt{\cdot})$ cannot be concave in any open interval $I\subset [0,+\infty)$.
\end{proposition}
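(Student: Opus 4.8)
The strategy is to argue by contraposition: suppose $K(\sqrt{\cdot})$ is concave on some open interval $I = (a,b) \subset [0,+\infty)$, and exhibit a positive-measure set of pairs $(x,y) \in \ocal \times \ocal$ for which the reverse inequality $\overline{K}(x,y) \leq \overline{K}(x,y^\star)$ holds. The natural place to look is the regime where $x$ and $y$ are ``close'' to the Simons cone in a controlled way, so that all the relevant distances $|x \pm Ry|$, $|x \pm Ry^\star|$, etc., land inside the interval where concavity of $K(\sqrt{\cdot})$ is available.

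\textbf{Key steps.} First, I would revisit the decomposition used in the proof of Proposition~\ref{Prop:KernelInequalitySufficientCondition}: by $O(m)^2$-invariance of $\overline{K}$, fix a unit vector $e \in \Sph^{m-1}$ and take $x = (|x'|e, |x''|e)$, $y = (|y'|e,|y''|e)$, and reduce the comparison of $4\overline{K}(x,y)$ and $4\overline{K}(x,y^\star)$ to the pointwise inequality \eqref{Eq:InequalityIntegrandKernelInequalityProof3}, i.e.\ $g(|A_{\alpha,\beta}|) + g(|D_{\alpha,\beta}|) \geq g(|C_{\alpha,\beta}|) + g(|B_{\alpha,\beta}|)$ for all $(\alpha,\beta)$ with $\alpha > |\beta|$, where $g(\tau) = K(\sqrt{|x|^2+|y|^2+2\tau}) + K(\sqrt{|x|^2+|y|^2-2\tau})$. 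Next, I would invoke (the appropriate direction of) Proposition~\ref{Prop:EquivalenceK(sqrt)Convex<->Inequality} from Appendix~\ref{Sec:AuxiliaryResults}: concavity of $K(\sqrt{\cdot})$ should translate into the reverse of \eqref{Eq:InequalityIntegrandKernelInequalityProof3} whenever the four arguments of $g$ fall in a range where $g$ itself is concave. So the third step is quantitative: choose $x, y \in \ocal$ with $|x|^2 + |y|^2$ small enough (and the ``off-diagonal'' quantities $|x'||y'|$, $|x'||y''|$, etc., correspondingly small) that $|A_{\alpha,\beta}|, |B_{\alpha,\beta}|, |C_{\alpha,\beta}|, |D_{\alpha,\beta}|$, as well as the shifted arguments $|x|^2 + |y|^2 \pm 2(\cdot)$, all lie in $I$; here one uses that $I$ is an open interval, so it has nonempty interior and we have room to maneuver. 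For such $(x,y)$ the pointwise inequality reverses for every $R \in Q_1$, hence $4\overline{K}(x,y) \leq 4\overline{K}(x,y^\star)$, contradicting \eqref{Eq:KernelInequalityAE} on a set of positive measure (the set of admissible $(x,y)$ is open and nonempty in $\ocal \times \ocal$).

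\textbf{Main obstacle.} The delicate point is ensuring that the interval $I$ where $K(\sqrt{\cdot})$ is concave can actually be reached by the arguments of $g$ \emph{simultaneously}. The arguments of $g$ are not $|A_{\alpha,\beta}|$ etc.\ directly but rather $|x|^2 + |y|^2 \pm 2|A_{\alpha,\beta}|$ and so on; so I need $K(\sqrt{\cdot})$ concave on an interval containing all of $\{|x|^2+|y|^2 \pm 2\xi : \xi \in \{|A|,|B|,|C|,|D|\}\}$. If $I = (a,b)$ with $a > 0$, one must translate: pick $x,y$ with $|x|^2 + |y|^2$ near the midpoint $(a+b)/2$ and the cross-terms $|x'||y'|$, $|x''||y''|$, $|x'||y''|$, $|x''||y'|$ all bounded by a small fraction of $(b-a)$; this is possible because in $\ocal$ one has freedom to make, say, $|x''|$ and $|y''|$ small while keeping $|x'|^2 + |x''|^2$ and $|y'|^2 + |y''|^2$ prescribed — one only needs $|x'| > |x''| \geq 0$ and $|y'| > |y''| \geq 0$. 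Once inside $I$, the concavity of $K(\sqrt{\cdot})$ gives concavity of $g$ on the relevant range (as a sum of two concave pieces composed with affine maps), and Proposition~\ref{Prop:EquivalenceK(sqrt)Convex<->Inequality} delivers the reversed inequality, completing the contrapositive argument. A minor additional point is handling the case where concavity is non-strict versus strict, but since the hypothesis \eqref{Eq:KernelInequalityAE} is a \emph{strict} inequality a.e., even non-strict concavity of $K(\sqrt{\cdot})$ on $I$ suffices to produce a contradiction.
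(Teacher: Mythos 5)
Your plan is essentially the paper's own proof: argue by contraposition, reduce via the $Q_1$ decomposition to comparing $g$ at $A_{\alpha,\beta},B_{\alpha,\beta},C_{\alpha,\beta},D_{\alpha,\beta}$, find an open nonempty set of pairs $(x,y)\in\ocal\times\ocal$ for which all these arguments stay inside the concavity range so that the pointwise inequality reverses for every $R\in Q_1$, and conclude $\overline{K}(x,y)\le\overline{K}(x,y^\star)$ there, contradicting the a.e.\ strict inequality \eqref{Eq:KernelInequalityAE}; the paper's set $\Omega_{\ell_1,\ell_2}$ in \eqref{Eq:OmegaSetDefinition} is exactly your admissible set. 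Two details in your sketch need fixing, though neither changes the argument. First, making $|x''|$ and $|y''|$ small while keeping $|x|^2$ and $|y|^2$ prescribed does not control the cross-term $|x'||y'|$; what does work (and what the paper's witness points $(x',0,y',0)$ implement) is to take, say, $|y|$ small and $|x|^2+|y|^2$ inside the concavity interval $(\ell_1,\ell_2)$, equivalently to impose $(|x'|-|y'|)^2+(|x''|-|y''|)^2>\ell_1$ and $(|x'|+|y'|)^2+(|x''|+|y''|)^2<\ell_2$, which is precisely the condition guaranteeing $|x'||y'|+|x''||y''|<\overline{\ell}$ so that all four arguments of $g$ are admissible. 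Second, the tool delivering the reversed inequality is not Proposition~\ref{Prop:EquivalenceK(sqrt)Convex<->Inequality} (which is stated only for $K(\sqrt{\cdot})$ strictly convex on all of $(0,+\infty)$), but Lemma~\ref{Lemma:ConvexFunctions} applied to $-g$, using that $g$ is even and concave on the small symmetric interval, hence nonincreasing on its positive half, together with point (1) of Lemma~\ref{Lemma:ComputationABCD}. Your closing observation is correct: non-strict concavity suffices, since the resulting non-strict reversed inequality on an open set already contradicts the strict inequality assumed almost everywhere.
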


\begin{proof}
	It suffices to show that if there exists an open interval where $K(\sqrt{\cdot})$ is concave, then we can find a nonempty open set in $\ocal \times \ocal$ where \eqref{Eq:KernelInequalityAE} is not satisfied.
	
	Let $\ell_2>\ell_1>0$ be such that $K(\sqrt{\cdot})$ is concave in $(\ell_1,\ell_2)$ and define the set $\Omega_{\ell_1,\ell_2}\subset \R^{4m}$ as the points $(x,y)\in \ocal\times \ocal$ satisfying
	\begin{equation}
	\label{Eq:OmegaSetDefinition}
	\beqc{\PDEsystem}
	(|x'|-|y'|)^2+(|x''|-|y''|)^2&>&\ell_1,\\
	(|x'|+|y'|)^2+(|x''|+|y''|)^2&<&\ell_2.
	\eeqc
	\end{equation}
	
	First, it is easy to see that $\Omega_{\ell_1,\ell_2}$ is a nonempty open set. In fact, points of the form $(x',0,y',0)\in (\R^m)^4$ such that $(|x'|-|y'|)^2>\ell_1$ and $(|x'|+|y'|)^2 <\ell_2$ belong to $\Omega_{\ell_1,\ell_2}$. 
	
	We need to prove that $\overline{K}(x,y) \leq \overline{K}(x, y^\star)$ in $\Omega_{\ell_1,\ell_2}$ for any $(x,y)\in \ocal\times \ocal$ satisfying \eqref{Eq:OmegaSetDefinition}. For such points, we are going to show, as in the previous proof, that
	\begin{equation}
	\label{Eq:InequalityIntegrandKernelInequalityProof4}
	\begin{split}
	K(|x - R y|) + K(|x + R y|) + K(|x - R_\star y|) + K(|x + R_\star y|)
	\quad \quad \quad \quad \quad \quad
	\\
	\leq
	K(|x - R y^\star|) + K(|x + R y^\star|)+K(|x - R_\star y^\star|) + K(|x + R_\star y^\star|)\,, 
	\end{split}
	\end{equation}
	for any $R\in Q_1$, where $Q_1$ is defined in \eqref{Eq:DefQ} (see the proof of Proposition~\ref{Prop:KernelInequalitySufficientCondition}). As before, we can assume that $x$ and $y$ are of the form $x = (|x'|e, |x''|e)$ and $y = (|y'|e, |y''|e)$, with $e \in \Sph^{m-1}$ an arbitrary unitary vector. Then, by defining $\alpha$ and $\beta$ as in \eqref{Eq:DefAlphaBeta}, we see that proving \eqref{Eq:InequalityIntegrandKernelInequalityProof4} is equivalent to establish that
	\begin{equation}
	\label{Eq:InequalityIntegrandKernelInequalityProof5}
	g(A_{\alpha,\beta}) + g(D_{\alpha,\beta}) \leq g(B_{\alpha,\beta}) + g(C_{\alpha,\beta})\,,
	\end{equation}
	for every $\alpha, \beta \in [-1,1]$ such that $\alpha>|\beta|$, where
	$$
	\begin{array}{cc}
	A_{\alpha,\beta} = |x'||y'|  \alpha + |x''||y''|\beta \,, &
	B_{\alpha,\beta} = |x'||y''| \alpha + |x''||y'| \beta \,, \\
	C_{\alpha,\beta} = |x''||y'| \alpha + |x'||y''| \beta \,, &
	D_{\alpha,\beta} = |x''||y''|\alpha + |x'||y'|  \beta \,.
	\end{array}
	$$
	and
	\begin{align*}
	g(\tau) &= K\left( \sqrt{|x|^2+|y|^2+2\tau} \right) + K\left( \sqrt{|x|^2+|y|^2-2\tau} \right).
	\end{align*}

	Now, by \eqref{Eq:OmegaSetDefinition}, we have $\ell_1 < |x|^2+|y|^2 <\ell_2$. As a consequence of this and the concavity of $K(\sqrt{\cdot})$ in $(\ell_1,\ell_2)$, it is easy to see (by using Lemma~\ref{Lemma:ConvexFunctions} stated for $-h$, a concave function, instead of $h$) that $g$ is concave in $ \left( -\overline{\ell}, \overline{\ell}\right) $, and decreasing in $(0,\overline{\ell})$, where 
	$$
	\overline{\ell} := \min{\left\{\frac{\ell_2-|x|^2-|y|^2}{2},\frac{|x|^2+|y|^2-\ell_1}{2}\right\}}.$$
	Note that, since $\ell_1 < |x|^2+|y|^2 <\ell_2$, we have $\overline{\ell}>0$.

	We claim that $A_{\alpha,\beta}, B_{\alpha,\beta}, C_{\alpha,\beta}$, and $D_{\alpha,\beta}$ belong to $(-\overline{\ell},\overline{\ell})$ for every $\alpha, \beta \in [-1,1]$ such that $\alpha>|\beta|$. Indeed, it is easy to check that for every $\alpha, \beta \in [-1,1]$ such that $\alpha>|\beta|$, the numbers $A_{\alpha,\beta}, B_{\alpha,\beta}, C_{\alpha,\beta}$, and $D_{\alpha,\beta}$ belong to the open interval $(-|x'||y'|-|x''||y''|,|x'||y'|+|x''||y''|)$. Furthermore, since $x,y \in \Omega_{\ell_1,\ell_2}$, we obtain from \eqref{Eq:OmegaSetDefinition} that
	$$
	\beqc{\PDEsystem}
	|x'||y'|+|x''||y''|&<&\dfrac{\ell_2-|x|^2-|y|^2}{2}\\
	|x'||y'|+|x''||y''|&<&\dfrac{|x|^2+|y|^2-\ell_1}{2}
	\eeqc 
	$$
	and thus $ |x'||y'|+|x''||y''|<\overline{\ell}$ and the claim is proved.
	
	Finally, by applying Lemma~\ref{Lemma:ConvexFunctions} to the function $-g$ in $(0,\overline{\ell})$ (using again  point (1) of Lemma~\ref{Lemma:ComputationABCD}), we obtain that inequality \eqref{Eq:InequalityIntegrandKernelInequalityProof5} is satisfied, which yields \eqref{Eq:InequalityIntegrandKernelInequalityProof4}. Finally, by integrating \eqref{Eq:InequalityIntegrandKernelInequalityProof4} with respect to all the rotations $R\in Q_1$ we get $$ \overline{K}(x,y) \leq \overline{K}(x, y^\star),$$ for every $(x,y)\in \Omega_{\ell_1,\ell_2}$, contradicting \eqref{Eq:KernelInequalityAE}.
\end{proof}

From the two previous results, Theorem~\ref{Th:SufficientNecessaryConditions} follows immediately.

\begin{proof}[Proof of Theorem~\ref{Th:SufficientNecessaryConditions}]
	The first statement is exactly the same as Proposition~\ref{Prop:KernelInequalitySufficientCondition}. Assume now that $K$ is a $C^2$ function and that \eqref{Eq:KernelInequality} holds. Then, by Proposition~\ref{Prop:KernelInequalityNecessaryCondition}, $h(\cdot) := K(\sqrt{\cdot})$ is not concave in any interval of $[0,+\infty)$. Therefore, we cannot have $h''<0$ at any point. Thus, $h''\geq 0$ in $[0,+\infty)$ or, in other words, $h'$ is nondecreasing. Using again that $h$ is not concave in any interval, we deduce that $h'$ must be, in fact, increasing. It follows that $h(\cdot) = K(\sqrt{\cdot})$ is strictly convex as defined after the statement of Theorem~\ref{Th:SufficientNecessaryConditions}.
\end{proof}

\begin{remark}
	Note that a priori we cannot relax the $K\in C^2$ assumption in the necessary condition of Theorem~\ref{Th:SufficientNecessaryConditions}, since there are $C^1$ functions that are neither convex nor concave in any interval (they can be constructed as a primitive of a Weierstrass function, whose graph is a non rectifiable curve with fractal dimension). Besides these ``exotic'' examples, there are also simple radially symmetric kernels $K$ that are not $C^1$ for which we do not know if the positivity condition \eqref{Eq:KernelInequality} holds. For instance, given $0<\s<1$, if we consider the kernel
	$$ K(\tau) = \frac{1}{\tau^{2m+2\s}} \chi_{(0,1)}(\tau)+\frac{1}{10\tau^{2m+2\s}-9} \chi_{[1,+\infty)}(\tau), $$
	it is easy to check that $K$ is continuous and decreasing but $K(\sqrt{\tau})$ is not convex in $(0,+\infty)$ even though it does not have any interval of concavity (see Figure~\ref{Fig:Grafica}).
	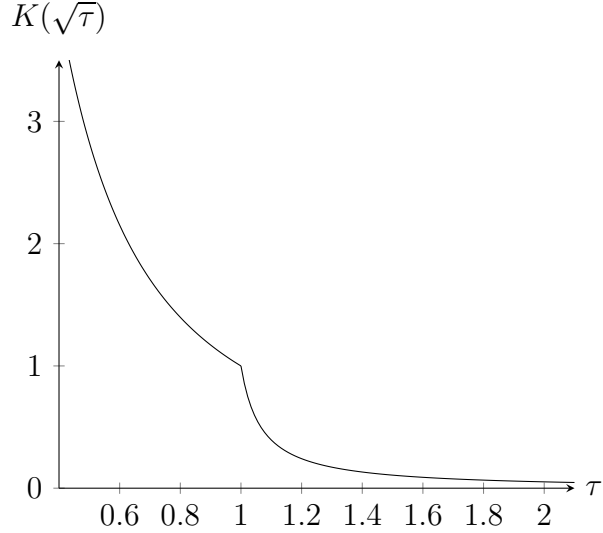
\begin{figure}
		\centering
		\begin{tikzpicture}
		\begin{axis}[
		axis x line=middle, axis y line=left,
		every axis x label/.style={at={(current axis.right of origin)},anchor=west},
		every axis y label/.style={at={(current axis.north west)},above=2mm},
		ymin=0, ymax=3.5, ylabel=$K(\sqrt{\tau})$,
		xmin=0.4, xmax=2.1, xlabel=$\tau$
		]
		\addplot[domain=0.4:1, samples=100] {1/(x^1.5)};
		\addplot[domain=1:2.1, samples=100] {0.1/(x^1.5-0.9)};
		\end{axis}
		\end{tikzpicture}
		\caption{An example of kernel $K(\sqrt{\tau})$ ($m=1$ and $\s=1/2$) which is not strictly convex in $(0,+\infty)$ but does not have any interval of concavity. }
		\label{Fig:Grafica}
	\end{figure}
\end{remark}

%---------------------------------------------------------------------%
\subsection{Maximum principles for doubly radial odd functions}
%---------------------------------------------------------------------%

In this subsection we prove Proposition~\ref{Prop:MaximumPrincipleForOddFunctions}, a weak and a strong maximum principles for doubly radial functions that are odd with respect to the Simons cone. The formulation of these maximum principles is very suitable since all the hypotheses refer to the set $\ocal$ and not $\R^{2m}$. The key ingredient in the proofs is the kernel inequality \eqref{Eq:KernelInequality}.

\begin{proof}[Proof of Proposition~\ref{Prop:MaximumPrincipleForOddFunctions}]
	$(i)$ By contradiction, suppose that $u$ takes negative values in $\Omega$. Under the hypotheses we are assuming, a negative minimum must be achieved. Thus, there exists $x_0\in \Omega$ such that
	$$
	u(x_0) = \min_{\Omega} u =: m < 0\,.
	$$
	Then, using the expression of $L_K$ for odd functions (see Lemma~\ref{Lemma:OperatorOddF}), we have
	$$
	L_K u (x_0) = \int_{\ocal} \{m - u(y) \} \{\overline{K}(x_0, y) - \overline{K}(x_0, y^\star)  \} \d y +  2 m \int_{\ocal} \overline{K}(x_0, y^\star) \d y\,.
	$$
	Now, since $m - u(y) \leq 0$ in $\ocal$, $m<0$, $c\geq 0$, and $\overline{K}(x_0, y) \geq \overline{K}(x_0, y^\star)>0$ ---by \eqref{Eq:KernelInequality}---, we get
	$$
	0 \leq L_K  u(x_0) + c(x_0) u(x_0) \leq m \left(2\int_{\ocal} \overline{K}(x_0, y^\star) \d y + c(x_0)\right)  < 0\,,
	$$
	a contradiction.
	
	$(ii)$ 
	Assume that $u \not \equiv 0$ in $\ocal$. We shall prove that $u > 0$ in $\Omega$. By contradiction, assume that there exists a point $x_0\in \Omega$ such that $u(x_0)= 0$. Then, using the expression of $L_K $ for odd functions given in Lemma~\ref{Lemma:OperatorOddF}, the kernel inequality \eqref{Eq:KernelInequality}, and the fact that $u\geq 0$ in $\ocal$, we obtain
	$$
	0 \leq L_K u(x_0) + c(x_0) u(x_0) = - \int_{\ocal} u(y)\big \{\overline{K}(x_0, y) - \overline{K}(x_0, y^\star) \big \}\d y < 0\,,
	$$
	a contradiction.
\end{proof}

\begin{remark}
	Note that since the operator $L_K$ includes a term of order zero with positive coefficient in addition to the integro-differential part, the condition $c\geq 0$ in point $(i)$ of the previous proposition can be slightly relaxed. Indeed, following the proof of the result, we see that
	$$ c(x) > -2\int_{\ocal} \overline{K}(x, y^\star) \d y $$
	suffices.
	This hypothesis seems hard to be checked for applications apart from the case $c\geq 0$. Nevertheless, recall that by Lemma~\ref{Lemma:OperatorOddF} we have an explicit lower bound for the quantity $ \int_{\ocal} \overline{K}(x, y^\star) \d y $ in terms of the function $\dist(x,\ccal)$. This fact will be crucial for establishing a maximum principle in ``narrow'' sets close to the Simons cone ---see Proposition~6.2 in part II \cite{FelipeSanz-Perela:IntegroDifferentialII}.
\end{remark}

%%%%%%%%%%%%%%%%%%%%%%%%%%%%%%%%%%%%%%%%%%%%%%%%%%%%%%%%%%%%%%%%%%%%%%%%%%%%%%%
%%%%%%%%%%%%%%%%%%%%%%%%%%%%%%%%%%%%%%%%%%%%%%%%%%%%%%%%%%%%%%%%%%%%%%%%%%%%%%%
\section{The energy functional for doubly radial odd functions}
%%%%%%%%%%%%%%%%%%%%%%%%%%%%%%%%%%%%%%%%%%%%%%%%%%%%%%%%%%%%%%%%%%%%%%%%%%%%%%%
%%%%%%%%%%%%%%%%%%%%%%%%%%%%%%%%%%%%%%%%%%%%%%%%%%%%%%%%%%%%%%%%%%%%%%%%%%%%%%%
\label{Sec:EnergyForOddF}

This section is devoted to the energy functional associated to the semilinear equation \eqref{Eq:NonlocalAllenCahn}. We first define appropriately the functional spaces where we are going to apply classic techniques of calculus of variations. Next we rewrite the energy in terms of the new kernel $\overline{K}$ and we give an alternative expression for the energy of doubly radial odd functions. Finally, we establish some results that are useful when using variational techniques, and that will be exploited in the next section.

Let us start by defining the functional spaces that we are going to consider in the rest of the paper. Given a set $\Omega \subset \R^n$ and a translation invariant and positive kernel $K$ satisfying \eqref{Eq:Symmetry&IntegrabilityOfK}, we define the Hilbert space
$$
\H^K(\Omega) := \setcond{w \in L^2(\Omega)}{[w]^2_{\H^K(\Omega)} < + \infty},
$$
where
$$
[w]^2_{\H^K(\Omega)} := \dfrac{1}{2}\int\int_{(\R^{n})^2 \setminus (\R^n\setminus\Omega)^2} |w(x) - w(y)|^2 K(x-y) \d x \d y\,.
$$
We also define
\begin{align*}
\H^K_0(\Omega) &:= \setcond{w \in \H^K(\Omega)}{ w = 0 \quad \textrm{a.e. in } \R^n \setminus \Omega} \\
&\ = \setcond{w \in \H^K(\R^n)}{ w = 0 \quad \textrm{a.e. in } \R^n \setminus \Omega}.
\end{align*}

Assume that $\Omega \subset \R^{2m}$ is a set of double revolution. Then, we define
$$
\widetilde{\H}^K(\Omega) := \setcond{w \in \H^K(\Omega)}{w \textrm{ is doubly radial a.e.}}.
$$
and
$$
\widetilde{\H}^K_0(\Omega) := \setcond{w \in \H^K_0(\Omega)}{w \textrm{ is doubly radial a.e.}}.
$$
We will add the subscript `odd' and `even' to these spaces to consider only functions that are odd (respectively even) with respect to the Simons cone.

\begin{remark}
	\label{Remark:DecompositionHK}
	If $\widetilde{\H}^K_0(\Omega)$ is equipped with the scalar product
	$$
	\langle v,w \rangle_{\widetilde{\H}^K_0(\Omega)} := \dfrac{1}{2}\int_{\R^{2m}} \int_{\R^{2m}}  \big(v(x) - v(y)\big)\big(w(x) - w(y)\big) K(x-y) \d x \d y\,,
	$$
	then it is easy to check that $\widetilde{\H}^K_0(\Omega)$ can be decomposed as the orthogonal direct sum of $\widetilde{\H}^K_{0,\, \mathrm{even}}(\Omega)$ and $\widetilde{\H}^K_{0,\,\mathrm{odd}}(\Omega)$.
\end{remark}

Note that when $K$ satisfies \eqref{Eq:Ellipticity}, then $\H^K_0 (\Omega) = \H^\s_0 (\Omega)$,
which is the space associated to the kernel of the fractional Laplacian, $K(z) = c_{n,\s} |z|^{-n-2\s}$.
Furthermore, $\H^\s(\Omega) \subset H^\s(\Omega)$, where $H^\s(\Omega)$ is the usual fractional Sobolev space where interactions of $x$ and $y$ are only computed in $\Omega \times \Omega$ (see \cite{HitchhikerGuide}).  For more comments on this, see~\cite{CozziPassalacqua}, and the references therein.

Once presented the functional setting of our problem, we proceed with the study of the energy functional associated to equation \eqref{Eq:NonlocalAllenCahn}.

Given a kernel $K$ satisfying \eqref{Eq:Symmetry&IntegrabilityOfK}, a potential $G$, and a function $w\in \H^K(\Omega)$, with $\Omega\subset \R^{n}$, we write the energy defined in \eqref{Eq:Energy} as
$$
\ecal(w, \Omega) = \ecal_\mathrm{K}(w,\Omega) + \ecal_\mathrm{P}(w,\Omega)\,,
$$
where
$$
\ecal_\mathrm{K}(w, \Omega) := \dfrac{1}{2} [w]^2_{\H^K(\Omega)} \quad \text{ and } \quad  \ecal_\mathrm{P}(w, \Omega) := \int_{\Omega} G(w) \d x
\,.
$$
We will call $\ecal_\mathrm{K}$ and $\ecal_\mathrm{P}$ the \emph{kinetic} and \emph{potential} energies respectively. Recall that sometimes it is useful to rewrite the kinetic energy as
\begin{equation}
\label{Eq:KineticEnergyInteractions}
\begin{split}
\ecal_\mathrm{K}(w, \Omega) = \dfrac{1}{4} \left \{ \int_\Omega \int_\Omega |w(x) - w(y)|^2 K(x-y) \d x \d y \right. \qquad \qquad \\
+\left. 2 \int_\Omega \int_{\R^n \setminus \Omega} |w(x) - w(y)|^2 K(x-y) \d x \d y \right \}.
\end{split}	
\end{equation}
Roughly speaking, we have $\ecal_\mathrm{K}$ split into two parts: ``interactions inside-inside'' and ``interactions inside-outside''.

Note that, for functions $w\in \H^K_0(\Omega)$, it holds $\ecal_\mathrm{K}(w,\Omega) = \ecal_\mathrm{K}(w,\R^n)$. Moreover, if $G\geq 0$, the energy satisfies $\ecal(w, \Omega) \leq \ecal(w, \Omega')$  whenever $ \Omega \subset \Omega'$.

Our goal is to rewrite the kinetic energy of doubly radial odd functions in terms of the kernel $\overline{K}$ and with integrals computed only in $\ocal$, in the spirit of the previous section with the operator $L_K$. In particular, we are interested in finding an expression similar to \eqref{Eq:KineticEnergyInteractions}, where the positive kernel $\overline{K}(x,y) - \overline{K}(x,y^\star)$ appears. To do this, we introduce the following notation for the interaction. Given $A$, $B\subset \ocal$ sets of double revolution, we define
\begin{equation}
\label{Eq:DefIw}
\begin{split}
I_w(A,B) := 2\int_A  \int_B  \ |w(x)-w(y)|^2 \left\{ \overline{K}(x,y) - \overline{K}(x,y^\star) \right\} \d x \d y  \\
+\, 4 \int_A  \int_B  \left\{w^2(x)+w^2(y)\right\} \overline{K}(x,y^\star) \d x \d y\,.
\end{split}
\end{equation}
Thanks to this notation, we rewrite the kinetic energy as follows.

\begin{lemma}
	\label{Lemma:ShortExpressionEnergy}
	Let $\Omega_1, \Omega_2 \subset \R^{2m}$ be two sets of double revolution that are symmetric with respect to the Simons cone, i.e., $\Omega_i^\star = \Omega_i$, and let $w\in \widetilde{\H}^K_{0,\, \mathrm{odd}}(\R^n)$. Let $K$ be a radially symmetric kernel satisfying \eqref{Eq:Symmetry&IntegrabilityOfK}. Then, 
	\begin{equation}
	\label{Eq:InteractionsEquality}
	\int_{\Omega_1} \int_{\Omega_2} |w(x)-w(y)|^2 K(x-y) = I_w(\Omega_1\cap \ocal, \Omega_2\cap \ocal),
	\end{equation}
	where $I_w(\cdot, \cdot)$ is the interaction defined in \eqref{Eq:DefIw}.
	
	As a consequence, given a doubly radial set $\Omega\subset \R^{2m}$ with $\Omega^\star = \Omega$, and a function $v\in \widetilde{\H}^K_{0,\, \mathrm{odd}}(\Omega)$, we can write the kinetic energy as
	\begin{align}
	\label{Eq:ShortExpressionEnergy}
	\ecal_\mathrm{K}(v, \Omega) = \frac{1}{4} \big \{I_v(\Omega\cap\ocal,\Omega\cap\ocal) +  2I_v(\Omega\cap\ocal,\ocal\setminus\Omega) \big \}.
	\end{align}
	
\end{lemma}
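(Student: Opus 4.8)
The plan is to prove the key interaction identity \eqref{Eq:InteractionsEquality} first, and then deduce \eqref{Eq:ShortExpressionEnergy} as a routine bookkeeping consequence. For the identity, I would start from the left-hand side and split each of the two domains $\Omega_i$ (which are symmetric with respect to the Simons cone) into its two halves $\Omega_i \cap \ocal$ and $\Omega_i \cap \ical$. This produces four pieces:
\begin{equation*}
\int_{\Omega_1}\int_{\Omega_2} = \int_{\Omega_1\cap\ocal}\int_{\Omega_2\cap\ocal} + \int_{\Omega_1\cap\ocal}\int_{\Omega_2\cap\ical} + \int_{\Omega_1\cap\ical}\int_{\Omega_2\cap\ocal} + \int_{\Omega_1\cap\ical}\int_{\Omega_2\cap\ical}
\end{equation*}
of the integrand $|w(x)-w(y)|^2 K(x-y)$. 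The first step is to replace $K(x-y)$ by $\overline{K}(x,y)$: since the inner integral can be symmetrized over $O(m)^2$-rotations of $x$ exactly as in the proof of Lemma~\ref{Lemma:AlternativeOperatorExpression} (using that $w$ is doubly radial and the Haar measure is invariant), one gets $\int\int |w(x)-w(y)|^2 K(x-y) = \int\int |w(x)-w(y)|^2 \overline{K}(x,y)$ over any pair of double-revolution sets.

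Next I would push all four pieces into $\ocal\times\ocal$ using the change of variables $y\mapsto y^\star$ on those factors that live in $\ical$, together with the oddness $w(y^\star) = -w(y)$ and the identities $\overline{K}(x^\star,y)=\overline{K}(x,y^\star)$, $\overline{K}(x^\star,y^\star)=\overline{K}(x,y)$ from Lemma~\ref{Lemma:PropertiesStar}. The piece over $\ocal\times\ical$ becomes $\int_{\Omega_1\cap\ocal}\int_{\Omega_2\cap\ocal} |w(x)+w(y)|^2 \overline{K}(x,y^\star)$; the piece over $\ical\times\ocal$ becomes, after $x\mapsto x^\star$, the same thing with the roles of $x$ and $y$ swapped, hence equal by symmetry of $\overline{K}$; and the piece over $\ical\times\ical$ becomes, after $x\mapsto x^\star$ and $y\mapsto y^\star$, a copy of the $\ocal\times\ocal$ piece $\int\int |w(x)-w(y)|^2\overline{K}(x,y)$. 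Collecting terms, the left-hand side equals
\begin{equation*}
2\int_{\Omega_1\cap\ocal}\int_{\Omega_2\cap\ocal} |w(x)-w(y)|^2 \overline{K}(x,y) + 2\int_{\Omega_1\cap\ocal}\int_{\Omega_2\cap\ocal} |w(x)+w(y)|^2 \overline{K}(x,y^\star).
\end{equation*}
Finally, expanding $|w(x)-w(y)|^2 = |w(x)-w(y)|^2$ and $|w(x)+w(y)|^2 = |w(x)-w(y)|^2 + 2(w^2(x)+w^2(y)) - 2|w(x)-w(y)|^2 + \dots$ — more precisely using the algebraic identity $|a-b|^2 + |a+b|^2 = 2(a^2+b^2)$ to rewrite $|w(x)+w(y)|^2 \overline{K}(x,y^\star) = \{|w(x)-w(y)|^2 + 2(w^2(x)+w^2(y)) - 2|w(x)-w(y)|^2\}$ is not quite it; rather I group the $\overline{K}(x,y)$ term with the part of the $\overline{K}(x,y^\star)$ term carrying $|w(x)-w(y)|^2$ by writing $|w(x)+w(y)|^2 = -|w(x)-w(y)|^2 + 2w^2(x) + 2w^2(y)$. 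This yields exactly $2\int\int |w(x)-w(y)|^2\{\overline{K}(x,y)-\overline{K}(x,y^\star)\} + 4\int\int \{w^2(x)+w^2(y)\}\overline{K}(x,y^\star)$, which is the definition of $I_w(\Omega_1\cap\ocal,\Omega_2\cap\ocal)$ in \eqref{Eq:DefIw}.

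For the consequence \eqref{Eq:ShortExpressionEnergy}, I would apply \eqref{Eq:InteractionsEquality} with the choices dictated by \eqref{Eq:KineticEnergyInteractions}: take $\Omega_1=\Omega_2=\Omega$ for the ``inside-inside'' term, giving $I_v(\Omega\cap\ocal,\Omega\cap\ocal)$, and take $\Omega_1=\Omega$, $\Omega_2=\R^{2m}\setminus\Omega$ for the ``inside-outside'' term; since both $\Omega$ and $\R^{2m}\setminus\Omega$ are $\star$-symmetric double-revolution sets, \eqref{Eq:InteractionsEquality} applies and gives $I_v(\Omega\cap\ocal,\ocal\setminus\Omega)$, noting $(\R^{2m}\setminus\Omega)\cap\ocal = \ocal\setminus\Omega$. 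Substituting into \eqref{Eq:KineticEnergyInteractions} produces $\ecal_\mathrm{K}(v,\Omega) = \frac14\{I_v(\Omega\cap\ocal,\Omega\cap\ocal) + 2I_v(\Omega\cap\ocal,\ocal\setminus\Omega)\}$, as claimed.

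\textbf{Main obstacle.} The only subtle point is bookkeeping: making sure the $\ical\times\ical$ piece really reproduces the $\overline{K}(x,y)$ term (not a $\overline{K}(x,y^\star)$ term) after the double substitution, and that the two ``mixed'' pieces $\ocal\times\ical$ and $\ical\times\ocal$ combine with the correct factor of $2$. Everything else is a direct application of the already-established rotation-averaging (Lemma~\ref{Lemma:AlternativeOperatorExpression}), the $\star$-invariances (Lemma~\ref{Lemma:PropertiesStar}), and the oddness of $w$; no analytic difficulty is involved beyond checking that all integrals are absolutely convergent, which follows from $w\in\widetilde{\H}^K_{0,\,\mathrm{odd}}$ together with the bound \eqref{Eq:ZeroOrderTerm} controlling $\int_{\ocal}\overline{K}(x,y^\star)\d y$.
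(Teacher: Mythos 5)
Your argument is correct and follows essentially the same route as the paper's proof: symmetrize the kernel to $\overline{K}$ by Haar averaging as in Lemma~\ref{Lemma:AlternativeOperatorExpression}, fold the integrals into $\ocal\times\ocal$ via the $(\cdot)^\star$ change of variables using the oddness of $w$ and Lemma~\ref{Lemma:PropertiesStar}, expand $|w(x)+w(y)|^2=-|w(x)-w(y)|^2+2\bigl(w^2(x)+w^2(y)\bigr)$ to recognize $I_w$, and then plug into \eqref{Eq:KineticEnergyInteractions}. The momentarily garbled expansion in your write-up is harmlessly self-corrected by the identity just quoted, so there is no mathematical gap.
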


\begin{proof}
	First, note that equality \eqref{Eq:ShortExpressionEnergy} for the kinetic energy follows directly combining expressions \eqref{Eq:KineticEnergyInteractions} and \eqref{Eq:InteractionsEquality}. Hence, we only need to prove \eqref{Eq:InteractionsEquality}.
	
	Now, since $w$ is doubly radial and $\Omega_1, \Omega_2$ are sets of double revolution, we obtain 
	$$
	\int_{\Omega_1} \int_{\Omega_2} |w(x)-w(y)|^2 K(x-y) \d x \d y = \int_{\Omega_1} \int_{\Omega_2} |w(x)-w(y)|^2 \overline{K}(x,y) \d x \d y\,,
	$$
	once we consider the change $ y = R\tilde{y}$ and take the average among all $R\in O(m)^2$ as in Lemma~\ref{Lemma:AlternativeOperatorExpression}.
	
	Finally, we split $\Omega_i$ into $\Omega_i \cap \ocal$ and $\Omega_i\setminus\ocal = (\Omega_i \cap \overline{\ocal})^\star$. By using the change of variables given by $(\cdot)^\star$ and the symmetries of $\Omega_i$ and $w$, we get
	\begin{align*}
	\int_{\Omega_1} \int_{\Omega_2} |w(x)&-w(y)|^2 \overline{K}(x,y) \d x \d y \\
	= & \hspace{1.3mm} 2 \int_{\Omega_1\cap \ocal} \int_{\Omega_2 \cap \ocal} |w(x)-w(y)|^2 \overline{K}(x,y) + |w(x)+w(y)|^2 \overline{K}(x,y^\star) \d x \d y  \\
	= & \hspace{1.3mm} 2 \int_{\Omega_1\cap \ocal} \int_{\Omega_2 \cap \ocal}  \ |w(x)-w(y)|^2 \left\{ \overline{K}(x,y) - \overline{K}(x,y^\star) \right\} \d x \d y \\
	&+4 \int_{\Omega_1\cap \ocal} \int_{\Omega_2\cap \ocal} \left\{w^2(x)+w^2(y)\right\} \overline{K}(x,y^\star) \d x \d y\,\\
	= & \hspace{1.3mm}  I_w(\Omega_1\cap\ocal,\Omega_2\cap\ocal).
	\end{align*}
	Here we have used that $\overline{K}(x^\star,y^\star) = \overline{K} (x,y)$ ---see Lemma~\ref{Lemma:PropertiesStar}.
\end{proof}

Using the previous expression for the energy, we can establish now the following lemma regarding the decrease of the energy under some operations. This result will be crucial in the next section, since it will allow us to assume that the minimizers of the energy are bounded by $1$ by above (respectively by $-1$ by below) and that are nonnegative in $\ocal$.
\begin{lemma}
	\label{Lemma:DecreaseEnergy} 
	Let $\Omega\subset \R^{2m}$ be a set of double revolution that is symmetric with respect to the Simons cone, and let $K$ be a radially symmetric kernel satisfying the positivity condition \eqref{Eq:KernelInequality}. Given $u\in\widetilde{\H}^K_{0,\mathrm{odd}}(\Omega)$, we define
	\begin{equation*}
	v(x) = \begin{cases}
	\hspace{3.2mm}|u(x)| \,\,\, &\text{if } \,\,\, x\in\ocal,\\
	-|u(x)| \,\,\, &\text{if } \,\,\, x\in\ical\, ,
	\end{cases}
	\quad 
	\text{ and }
	\quad
	w(x) = \begin{cases}
	\hspace{3.6mm}\min\{1,u(x)\} \,\,\, &\text{if } \,\,\, x\in\ocal,\\
	\,\,\,\max\{-1,u(x)\} \,\,\, &\text{if } \,\,\, x\in\ical\,.
	\end{cases}
	\end{equation*}
	
	If $G$ satisfies \eqref{Eq:HipothesesG}, then
	$$ \ecal(v,\Omega) \leq \ecal(u,\Omega) \quad 
	\text{ and }
	\quad \ecal(w,\Omega) \leq \ecal(u,\Omega) \,.  $$
\end{lemma}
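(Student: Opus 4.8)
The plan is to prove both inequalities in parallel, since the structure is the same: in each case we replace $u$ by a function obtained by a "symmetric truncation-type" operation, and we check that the potential energy does not increase and the kinetic energy does not increase. Throughout I would use the short expression for the kinetic energy from Lemma~\ref{Lemma:ShortExpressionEnergy}, namely $\ecal_\mathrm{K}(v,\Omega) = \tfrac14\{I_v(\Omega\cap\ocal,\Omega\cap\ocal) + 2 I_v(\Omega\cap\ocal,\ocal\setminus\Omega)\}$, so that everything is expressed via the interaction functional $I_w(A,B)$ of \eqref{Eq:DefIw}, which only involves the nonnegative kernels $\overline{K}(x,y)-\overline{K}(x,y^\star)$ and $\overline{K}(x,y^\star)$. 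The first thing to note is that both $v$ and $w$ are doubly radial, odd with respect to $\ccal$, and vanish outside $\Omega$; in particular they belong to $\widetilde{\H}^K_{0,\mathrm{odd}}(\Omega)$, so the energy is well defined for them. (For $v$ this uses $|v|=|u|$ pointwise; for $w$ it uses $|w|\le|u|$ pointwise, together with $|w(x)-w(y)|\le|u(x)-u(y)|$, which is the standard $1$-Lipschitz property of $t\mapsto\min\{1,t\}$.)

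For the potential energy, recall $\ecal_\mathrm{P}(\cdot,\Omega) = \int_\Omega G(\cdot)$. For $v$: since $G$ is even by \eqref{Eq:HipothesesG} and $|v(x)| = |u(x)|$, we have $G(v(x)) = G(|u(x)|) = G(u(x))$ pointwise, so $\ecal_\mathrm{P}(v,\Omega) = \ecal_\mathrm{P}(u,\Omega)$. For $w$: using that $G$ is even and $G\ge G(\pm1)=0$, one checks $G(w(x))\le G(u(x))$ for every $x$ — indeed for $x\in\ocal$, if $u(x)\le 1$ then $w(x)=u(x)$; if $u(x)>1$ then $w(x)=1$ and $G(w(x)) = 0 \le G(u(x))$; the case $x\in\ical$ is symmetric using evenness of $G$. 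Hence $\ecal_\mathrm{P}(w,\Omega)\le\ecal_\mathrm{P}(u,\Omega)$. So in both cases it remains to show the kinetic energy does not increase.

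For the kinetic energy, by Lemma~\ref{Lemma:ShortExpressionEnergy} it suffices to show $I_v(A,B)\le I_u(A,B)$ and $I_w(A,B)\le I_u(A,B)$ for the relevant pairs $(A,B)$ of sets of double revolution inside $\ocal$. Looking at \eqref{Eq:DefIw}, $I$ has two terms: a "squared-difference" term weighted by $\overline{K}(x,y)-\overline{K}(x,y^\star)>0$ and a "sum-of-squares" term weighted by $\overline{K}(x,y^\star)>0$. For the second term: since $|v|=|u|$ we get $v^2(x)=u^2(x)$ pointwise, so that term is unchanged; since $|w|\le|u|$ we get $w^2(x)\le u^2(x)$, so that term does not increase. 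For the first term, the point is the pointwise inequalities $|v(x)-v(y)|\le|u(x)-u(y)|$ and $|w(x)-w(y)|\le|u(x)-u(y)|$ for $x,y\in\ocal$: the $w$-inequality is again the $1$-Lipschitz property of $t\mapsto\min\{1,t\}$ restricted to $\ocal$ (where $v$ and $w$ take the values $|u|$ and $\min\{1,u\}$ respectively, and on $\ocal$ we may as well work with $u$ itself if $u\ge 0$ there — but we cannot assume that, so I would instead argue directly), while the $v$-inequality on $\ocal$ is $\big||u(x)|-|u(y)|\big|\le|u(x)-u(y)|$. Combined with positivity of the weight $\overline{K}(x,y)-\overline{K}(x,y^\star)$ — which holds by the hypothesis \eqref{Eq:KernelInequality} — this gives the desired monotonicity term by term, hence $\ecal_\mathrm{K}(v,\Omega)\le\ecal_\mathrm{K}(u,\Omega)$ and $\ecal_\mathrm{K}(w,\Omega)\le\ecal_\mathrm{K}(u,\Omega)$, and adding the potential parts finishes the proof.

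The only genuinely delicate point — and the one I would single out as the main obstacle — is the $v$-case of the kinetic estimate, because for $v$ the relevant pointwise inequality $|v(x)-v(y)|\le|u(x)-u(y)|$ needs care: both $x,y$ range only over $\ocal$ (thanks to the $\ocal$-only formula of Lemma~\ref{Lemma:ShortExpressionEnergy}), where $v=|u|$, so this reduces to $\big||u(x)|-|u(y)|\big|\le|u(x)-u(y)|$, a triviality. What makes it look subtle at first is that $v$ is \emph{not} obtained from $u$ by a Lipschitz function of $u$ applied globally on $\R^{2m}$ (the map is $u\mapsto|u|$ on $\ocal$ but $u\mapsto-|u|$ on $\ical$), so one cannot invoke a one-line "postcomposition with a $1$-Lipschitz function decreases the Gagliardo seminorm" argument on all of $\R^{2m}$; this is precisely why the $\ocal$-only energy identity \eqref{Eq:ShortExpressionEnergy} — which is available thanks to $\overline{K}(x,y)\ge\overline{K}(x,y^\star)$ — is essential, and it is worth emphasizing this in the write-up.
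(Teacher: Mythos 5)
Your proposal is correct and follows essentially the same route as the paper: both rely on the odd-function energy expression \eqref{Eq:ShortExpressionEnergy} together with the kernel positivity \eqref{Eq:KernelInequality} to reduce everything to the pointwise inequalities $|v(x)-v(y)|\le|u(x)-u(y)|$, $v^2(x)= u^2(x)$ (resp.\ $|w(x)-w(y)|\le|u(x)-u(y)|$, $w^2(x)\le u^2(x)$) for $x,y\in\ocal$, and to the evenness and nonnegativity of $G$ for the potential term. Your closing remark about why a global ``postcompose with a $1$-Lipschitz map'' argument fails and why the $\ocal$-only identity is the key is precisely the point the paper's proof exploits.
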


\begin{proof}
	We first establish the result for $v$. Let us show that  $\ecal_\mathrm{K}(v) \leq \ecal_\mathrm{K}(u)$. Note that $v\in \widetilde{\H}^K_{0,\mathrm{odd}}(\Omega)$. Thus, by using the expression of the kinetic energy given in \eqref{Eq:ShortExpressionEnergy} and the fact that $\overline{K}(x,y) > \overline{K}(x,y^\star)> 0$ if $x,y\in \ocal$ ---see \eqref{Eq:KernelInequality}---, we only need to check that $|v(x)-v(y)|^2\leq |u(x)-u(y)|^2$ and $v^2(x)\leq u^2(x)$ whenever $x,y\in\ocal$. The first condition follows from the equivalence
	$$ \big||u(x)|-|u(y)|\big|^2\leq |u(x)-u(y)|^2 \Longleftrightarrow u(x)u(y) \leq |u(x)u(y)|,  
	$$
	while the second one is trivial and it is in fact an equality. Concerning the potential energy, since $G$ is an even function we have that $\ecal_\mathrm{P}(v) = \ecal_\mathrm{P}(u)$, and therefore we get the desired result for $v$ by adding the kinetic and potential energies.
	
	We show now the result for $w$. Let us show that  $\ecal_\mathrm{K}(w) \leq \ecal_\mathrm{K}(u)$. As before, $w\in \widetilde{\H}^K_{0,\mathrm{odd}}(\Omega)$ and thus, in view of \eqref{Eq:ShortExpressionEnergy} and the kernel inequality \eqref{Eq:KernelInequality}, we only need to check that $|w(x)-w(y)|^2\leq |u(x)-u(y)|^2$ and $w^2(x) \leq u^2(x)$ whenever $x,y\in\ocal$. The first inequality is trivial whenever $u(x)\leq 1$ and $u(y)\leq 1$, or $u(x)\geq 1$ and $u(y)\geq 1$. If $u(x)\geq 1$ and $u(y)\leq 1$, then $ |u(x)-u(y)|^2-|w(x)-w(y)|^2 = |u(x)-u(y)|^2-|1-u(y)|^2 = (u(x)-1))^2+2(u(x)-1)(1-u(y)) \geq 0$. The second inequality follows from the fact that $w^2(x) = u^2(x)$ when $u(x)\leq 1$, while $w^2(x) = 1 \leq u^2(x)$ if $u(x)\geq 1$. Concerning the potential energy, since $G$ is such that $G(x)\geq G(1) = G(-1) = 0$ if $|x|\leq 1$, then clearly $\ecal_\mathrm{P}(w) \leq \ecal_\mathrm{P}(u)$, and therefore we get the desired result by adding the kinetic and potential energies.
\end{proof}

Next we present a result that will be used later, and concerns weak solutions to semilinear Dirichlet problems. Its main consequence is that a function $u\in \widetilde{\H}^K_{0}(\Omega)$ that minimizes the energy $\ecal$, but only among doubly radial functions, is actually a weak solution to a semilinear Dirichlet problem in $\Omega$. We remark that to show the following result we do not need to use the kernel $\overline{K}$.

\begin{proposition}
	\label{Prop:WeakSolutionForAllTestFunctions}
	Let $\Omega \subset \R^{2m}$ be a bounded set of double revolution and let $L_K \in \lcal_0$ with kernel $K$ radially symmetric. Let $u\in \widetilde{\H}^K_{0}(\Omega)$ be such that
	$$
	\int_{\R^{2m}}\int_{\R^{2m}} \{u(x)-u(y)\}\{\xi(x)-\xi(y)\} K(|x-y|) \d x \d y = \int_{\R^{2m}} f(u(x)) \xi(x) \d x
	$$
	for every $\xi \in C^\infty_c(\Omega)$ that is doubly radial. Then, $u$ is a weak solution to
	\begin{equation}
		\label{Eq:DirichletProblemOmega}
	\beqc{\PDEsystem}
	L_K u &=& f(u) & \text{in } \Omega\,,\\
	u &=& 0 & \text{in } \R^{2m}\setminus \Omega\,,
	\eeqc
	\end{equation}
	i.e.,
	$$
	\int_{\R^{2m}}\int_{\R^{2m}} \{u(x)-u(y)\}\{\eta(x)-\eta(y)\} K(|x-y|) \d x \d y = \int_{\R^{2m}} f(u(x)) \eta(x) \d x
	$$
	for every $\eta \in C^\infty_c(\Omega)$ (not necessarily doubly radial).
	
	As a consequence, if $u\in \widetilde{\H}^K_{0}(\Omega)$ is a doubly radial odd minimizer of the energy $\ecal(u,\Omega)$, then it is a weak solution to \eqref{Eq:DirichletProblemOmega}.
\end{proposition}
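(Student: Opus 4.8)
The plan is to prove the first (and main) assertion by a symmetrization argument over the compact group $O(m)^2$ --- reducing an arbitrary test function to a doubly radial one --- and then to deduce the consequence by combining it with the first variation of the constrained minimization together with an odd/even splitting of doubly radial test functions with respect to $\ccal$.

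First I would prove the main assertion. Fix $\eta\in C^\infty_c(\Omega)$, not necessarily doubly radial, and set
$$
\xi(x):=\average_{O(m)^2}\eta(Rx)\,\d R .
$$
Because $\Omega$ is a set of double revolution, $\xi\in C^\infty_c(\Omega)$ (smoothness by differentiation under the integral over the compact group, and $\mathrm{supp}\,\xi\subset O(m)^2\cdot\mathrm{supp}\,\eta$, a compact subset of $\Omega$), and $\xi$ is doubly radial by right-invariance of the Haar measure, exactly as in the proof of Lemma~\ref{Lemma:AlternativeOperatorExpression}. For each fixed $R\in O(m)^2$, the rotation $x\mapsto R^{-1}x$, $y\mapsto R^{-1}y$, together with $u(R^{-1}\cdot)=u(\cdot)$ (as $u$ is doubly radial) and $|R^{-1}x-R^{-1}y|=|x-y|$, gives
\begin{multline*}
\int_{\R^{2m}}\!\int_{\R^{2m}}\{u(x)-u(y)\}\{\eta(Rx)-\eta(Ry)\}K(|x-y|)\,\d x\,\d y\\
=\int_{\R^{2m}}\!\int_{\R^{2m}}\{u(x)-u(y)\}\{\eta(x)-\eta(y)\}K(|x-y|)\,\d x\,\d y,
\end{multline*}
and likewise $\int_{\R^{2m}}f(u(x))\eta(Rx)\,\d x=\int_{\R^{2m}}f(u(x))\eta(x)\,\d x$. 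The double integral converges absolutely by Cauchy--Schwarz, since $u\in\H^K(\R^{2m})$ and the Lipschitz function $\eta$ has compact support, hence finite $\H^K$-seminorm by \eqref{Eq:Symmetry&IntegrabilityOfK}; therefore Fubini allows me to average the two identities above over $R\in O(m)^2$, which replaces $\eta$ by $\xi$ on the left-hand sides while leaving the right-hand sides unchanged. Applying the hypothesis to the admissible doubly radial test function $\xi$ then yields the weak formulation against $\eta$, i.e. \eqref{Eq:DirichletProblemOmega}.

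For the consequence, let $u\in\widetilde{\H}^K_{0,\mathrm{odd}}(\Omega)$ minimize $\ecal(\cdot,\Omega)$ over that space. Given a doubly radial $\xi\in C^\infty_c(\Omega)$, split $\xi=\xi_{\mathrm{o}}+\xi_{\mathrm{e}}$ with $\xi_{\mathrm{o}}(x):=\tfrac12\{\xi(x)-\xi(x^\star)\}$ and $\xi_{\mathrm{e}}(x):=\tfrac12\{\xi(x)+\xi(x^\star)\}$; since $(\cdot)^\star$ is an isometry and $\Omega^\star=\Omega$, both summands are doubly radial and lie in $C^\infty_c(\Omega)$, with $\xi_{\mathrm{o}}$ odd and $\xi_{\mathrm{e}}$ even with respect to $\ccal$. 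Because $u+t\,\xi_{\mathrm{o}}\in\widetilde{\H}^K_{0,\mathrm{odd}}(\Omega)$ for all $t$, differentiating $t\mapsto\ecal(u+t\,\xi_{\mathrm{o}},\Omega)$ at $t=0$ (using $G\in C^2$, $G'=-f$) yields the weak formulation tested against $\xi_{\mathrm{o}}$. For the even part both sides vanish: applying the measure-preserving involution $x\mapsto x^\star$, $y\mapsto y^\star$ to the bilinear form and using $u(x^\star)=-u(x)$, $\xi_{\mathrm{e}}(x^\star)=\xi_{\mathrm{e}}(x)$, $|x^\star-y^\star|=|x-y|$ shows that its integrand changes sign, so the integral equals its own negative; similarly $f(u)$ is odd with respect to $\ccal$ (as $f$ is odd and $u$ is odd with respect to $\ccal$) while $\xi_{\mathrm{e}}$ is even, so $\int_{\R^{2m}}f(u)\,\xi_{\mathrm{e}}$ also equals its own negative. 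Adding the contributions of $\xi_{\mathrm{o}}$ and $\xi_{\mathrm{e}}$ gives the weak formulation against every doubly radial $\xi\in C^\infty_c(\Omega)$, and then the first part upgrades this to all $\eta\in C^\infty_c(\Omega)$.

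The hard part is the justification of Fubini and Cauchy--Schwarz in the first step, i.e. ensuring that every bilinear form and integral appearing is absolutely convergent; this rests only on $u\in\H^K$ and the integrability in \eqref{Eq:Symmetry&IntegrabilityOfK} (equivalently \eqref{Eq:Ellipticity}), so that $\eta$, $\xi$, $\xi_{\mathrm{o}}$ and $\xi_{\mathrm{e}}$ --- being Lipschitz with compact support --- all have finite $\H^K$-seminorm. The first-variation computation in the second step is routine, since the potential term is differentiable along the bounded, compactly supported direction $\xi_{\mathrm{o}}$; no further idea is needed, and the odd/even cancellation follows directly from the symmetries of $u$ and $f$.
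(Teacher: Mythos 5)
Your proposal is correct and follows essentially the same route as the paper: symmetrize an arbitrary $\eta\in C^\infty_c(\Omega)$ over $O(m)^2$ and use the rotation change of variables to reduce to doubly radial test functions, then handle the minimizer by an odd/even splitting with respect to $\ccal$, where the odd direction is treated by the first variation and the even direction cancels by the symmetries of $u$ and $f$. The only differences are cosmetic: you split the smooth test function itself into $\xi_{\mathrm{o}}+\xi_{\mathrm{e}}$ rather than invoking the orthogonal decomposition of $\widetilde{\H}^K_0(\Omega)$ from Remark~\ref{Remark:DecompositionHK}, and you spell out the absolute-convergence/Fubini justification that the paper leaves implicit.
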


\begin{proof}
	Let $\eta \in C^\infty_c(\Omega)$. We define an associated doubly radial function by
	$$
	\overline{\eta}(x) := \average_{O(m)^2}\eta(R x)\d R\,.
	$$
	
	Now, on the one hand, given $R\in O(m)^2$ and using the change $x = R\tilde{x}$, $y = R \tilde{y}$ and the fact that $u$ is doubly radial, we get
	\begin{align*}
	&\int_{\R^{2m}}\int_{\R^{2m}} \{u(x)-u(y)\}\{\eta(x)-\eta(y)\} K(|x-y|) \d x \d y = \\
	&\quad \quad \quad = \int_{\R^{2m}}\int_{\R^{2m}} \{u(x)-u(y)\}\{\eta(R x)-\eta(R y)\} K(|x-y|) \d x \d y\,.
	\end{align*}
	Taking the average in the previous equality among all $R\in O(m)^2$ we obtain
	\begin{align*}
	& \int_{\R^{2m}}\int_{\R^{2m}} \{u(x)-u(y)\}\{\eta(x)-\eta(y)\} K(|x-y|) \d x \d y = \\
	&\quad \quad \quad =\average_{O(m)^2} \int_{\R^{2m}}\int_{\R^{2m}} \{u(x)-u(y)\}\{\eta(R x)-\eta(R y)\} K(|x-y|) \d x \d y \d R \\
	&\quad \quad \quad= \int_{\R^{2m}}\int_{\R^{2m}} \{u(x)-u(y)\}\left \{\overline{\eta}(x) -\overline{\eta}(y)  \right \} K(|x-y|) \d x \d y \,.
	\end{align*}
	
	On the other hand, using also the change $x = R\tilde{x}$, we have
	$$
	\int_{\Omega} f(u(x)) \eta(x) \d x = \int_{\Omega} f(u(R^{-1}x)) \eta(x) \d x = \int_{\Omega} f(u(x)) \eta(Rx) \d x\,.
	$$
	Similarly as before, taking the average among all $R\in O(m)^2$, we get
	$$
	\int_{\Omega} f(u(x)) \eta(x) \d x = \average_{O(m)^2} \int_{\Omega} f(u(x)) \eta(Rx) \d x \d R = \int_{\Omega} f(u(x))\overline{\eta}(x) \d x\,.
	$$
	
	Hence, since $\overline{\eta} \in C^\infty_c(\Omega)$ is doubly radial, we have
	\begin{align*}
	&\int_{\R^{2m}}\int_{\R^{2m}} \{u(x)-u(y)\}\{\eta(x)-\eta(y)\} K(|x-y|) \d x \d y - \int_{\Omega} f(u(x)) \eta(x) \d x \\
	&\quad \quad= \int_{\R^{2m}}\int_{\R^{2m}} \{u(x)-u(y)\}\left \{\overline{\eta}(x) -\overline{\eta}(y)  \right \} K(|x-y|) \d x \d y - \int_{\Omega} f(u(x))\overline{\eta}(x) \d x \\
	&\quad \quad= 0\,,
	\end{align*}
	and thus the first result is proved.

	We next show that if $u$ is a doubly radial odd minimizer, then it is a weak solution to \eqref{Eq:DirichletProblemOmega}. To see this, we consider perturbations $u +  \varepsilon \xi$ with $\varepsilon\in \R$ and $\xi \in \widetilde{\H}^K_{0}(\Omega)$. By Remark~\ref{Remark:DecompositionHK}, it suffices to consider only even and odd functions $\xi$. Let first $\xi \in \widetilde{\H}^K_{0, \,\mathrm{odd}}(\Omega)$. Then, since $u$ is a minimizer among functions in $\widetilde{\H}^K_{0, \,\mathrm{odd}}(\Omega)$, we get
	$$
	0 = \dfrac{\d}{\d \varepsilon}\evalat{\varepsilon = 0} \ecal(u +  \varepsilon \xi, \Omega) = \langle u,\xi \rangle_{\widetilde{\H}^K_0(\Omega)} - \langle f(u),\xi \rangle_{L^2(\Omega)}\,.
	$$
	Next, take $\xi \in \widetilde{\H}^K_{0, \,\mathrm{even}}(\Omega)$. Since $u$ is odd with respect to the Simons cone, the same holds for $f(u)$ ---recall that $f$ is odd. Thus,
	$$
	\langle u,\xi \rangle_{\widetilde{\H}^K_0(\Omega)} = 0 \quad \textrm{ and } \quad  \langle f(u),\xi \rangle_{L^2(\Omega)} = 0\,.
	$$
	Therefore, 
	$$
	\langle u,\xi \rangle_{\widetilde{\H}^K_0(\Omega)} = \langle f(u_R),\xi \rangle_{L^2(\Omega)}
	$$
	for every $\xi \in\widetilde{\H}^K_0(\Omega)$ with compact support in  $\Omega$. In particular,
	$$
	\int_{\R^{2m}}\int_{\R^{2m}} \{u_R(x)-u_R(y)\}\{\xi(x)-\xi(y)\} K(|x-y|) \d x \d y = \int_{\R^{2m}} f(u_R(x)) \xi(x) \d x
	$$
	for every $\xi \in C^\infty_c(\Omega)$ that is doubly radial. Finally, by the first statement of the proposition, that we just proved, we obtain that $u$ is a weak solution to \eqref{Eq:DirichletProblemOmega}.
\end{proof}

The previous proposition, combined with the regularity results of the following remark, yields that bounded minimizers among doubly radial functions of the energy $\ecal(\cdot,\Omega)$ are classical solutions to $L_K u = f(u)$ in $\Omega$.

\begin{remark}
	\label{Remark:InteriorRegularity}
	 Let us present here some interior estimates that will be used in the sequel. If $w\in L^\infty (\R^n)$ is a weak solution to $L_K w = h$ in $B_1\subset \R^n$, with $L_K \in \lcal_0(n,\s,\lambda, \Lambda)$, then
	\begin{equation}
	\label{Eq:C2sEstimate}
	\norm{w}_{C^{2\s} (\overline{B_{1/2}})} \leq C\bpar{\norm{h}_{L^\infty (B_1)} + \norm{w}_{L^\infty  (\R^n)}}.
	\end{equation} 
	If, in addition, $w \in C^\alpha (\R^n)$ with $\alpha + 2\s$ not an integer, then
	\begin{equation}
	\label{Eq:Calpha->Calpha+2sEstimate}
	\norm{w}_{C^{\alpha + 2\s} (\overline{B_{1/2}})} \leq C\bpar{\norm{h}_{C^{\alpha} (\overline{B_1})} + \norm{w}_{C^\alpha (\R^n)} },
	\end{equation}
	where the previous two constants $C$ depend only on $n$, $\s$, $\lambda$, and $\Lambda$ (see \cite{RosOton-Survey,SerraC2s+alphaRegularity} and the references therein).

	Note that in some situations these estimates are not suitable enough to be applied repeatedly due to the term $\norm{w}_{C^\alpha (\R^n)}$ in \eqref{Eq:Calpha->Calpha+2sEstimate}. 
	Let us show how to overcome this difficulty in our setting, that is, when $K$ is radially symmetric and $K(\sqrt{\cdot})$ is convex. 
	In this case, using the ellipticity property \eqref{Eq:Ellipticity} and the convexity assumption \eqref{Eq:SqrtConvex} for $K$, it is not difficult to show that $K$ is locally Lipschitz in $\R^n \setminus \{0\}$ and
	\begin{equation}
	\label{Eq:KLipschitz}
		\seminorm{K}_{\Lip (\R^n \setminus B_R)} \leq C R^{-n-2\s-1}, \quad \text{ for all } R>0,
	\end{equation} 
	with a positive constant $C$ depending only on $n$, $\s$, $\lambda$, and $\Lambda$.
	
	Using for $L_K$ the same cut-off argument as in Corollary~2.4 of \cite{RosOtonSerra-Regularity} for the fractional Laplacian, and taking into account \eqref{Eq:KLipschitz}, one can modify \eqref{Eq:Calpha->Calpha+2sEstimate} to obtain the estimate 
	\begin{equation} 
	\label{Eq:Calpha->Calpha+2sEstimateBalls} 
	\norm{w}_{C^{\alpha + 2\s} (\overline{B_{1/4}})} \leq C\bpar{\norm{h}_{C^{\alpha} (\overline{B_1})} + \norm{w}_{C^\alpha (\overline{B_1})} + \norm{\dfrac{w(x)}{(1+|x|)^{n+2\s}}}_{L^1(\R^n)} }, 
	\end{equation} 
	for all $\alpha \in (0,1)$ with $\alpha + 2\s$ not an integer, and with $C$ depending only on $n$, $\s$, $\lambda$, and $\Lambda$.
	This, combined with \eqref{Eq:C2sEstimate}, will be used in the following section to obtain uniform Lipschitz interior estimates for the semilinear equation $L_K u = f(u)$.	
\end{remark}

%%%%%%%%%%%%%%%%%%%%%%%%%%%%%%%%%%%%%%%%%%%%%%%%%%%%%%%%%%%%%%%%%%%%%%%%%%%%%%%
%%%%%%%%%%%%%%%%%%%%%%%%%%%%%%%%%%%%%%%%%%%%%%%%%%%%%%%%%%%%%%%%%%%%%%%%%%%%%%%
\section{An energy estimate for doubly radial odd minimizers}
%%%%%%%%%%%%%%%%%%%%%%%%%%%%%%%%%%%%%%%%%%%%%%%%%%%%%%%%%%%%%%%%%%%%%%%%%%%%%%%
%%%%%%%%%%%%%%%%%%%%%%%%%%%%%%%%%%%%%%%%%%%%%%%%%%%%%%%%%%%%%%%%%%%%%%%%%%%%%%%
\label{Sec:EnergyEstimate}

In this section we present an estimate for the energy in the ball $B_S$ of minimizers in the space $\widetilde{\H}^K_{0, \mathrm{odd}}(B_R)$ with $R > S+ 4$. That is, we prove Theorem~\ref{Th:EnergyEstimate}. In order to establish this result, we follow the ideas of Savin and Valdinoci in \cite{SavinValdinoci-EnergyEstimate}, where they show the same estimate but for minimizers without any restriction on their symmetry.

First of all, let us comment briefly the strategy used in \cite{SavinValdinoci-EnergyEstimate}. The argument is based on comparing the energy of the minimizer $u$ in $B_R\subset \R^n$ with the energy of a suitable competitor $v$. This function $v$ satisfies, in $B_{S+2}\subset B_R \subset \R^n$, the following properties:
\begin{enumerate}[label=(\textit{\roman*})]
	\item $-1 \leq v \leq 1$.
	\item $v=u$ in $\partial B_{S+2}$.
	\item The set $\{v\not \equiv -1\}\cap B_{S+2}$ has measure bounded by $C S^{n-1}$ for some constant $C$.
	\item $v\in \Lip(\overline{B_{S+2}})$ with a Lipschitz constant independent of $R$ and $S$.
\end{enumerate} 
By the second property, $v$ can be extended to coincide with $u$ outside $B_{S+2}$, becoming an admissible competitor. Then, the desired estimate follows by finding precise bounds on the energy of $v$ in $B_{S+2}$. The function $v$ is constructed in $B_{S+2}$ as $v = \min \{u, \phi_S\}$, where $\phi_S (x) =-1+2\min\{(|x|-S-1)_+,1\}$ ---we will also use this function below, see \eqref{Eq:DefOfPhiS}.

In our case, the strategy will be the same but adapting some ingredients, namely, the competitor $v$. First, note that the previous construction for $v$ cannot be used in our setting, since it would not produce a doubly radial odd function. To overcome this problem, we will construct a function $w$ defined in $B_{S+2}\cap \ocal$ and satisfying the four previous assumptions on $v$. In addition, we will require $w$ to be doubly radial and to vanish on the Simons cone (then we will consider its odd extension through $\ccal$).

To state the precise properties of $w$, we need to consider the Lipschitz constant of $u$ in $\overline{B_{S+3}}$, namely
\begin{equation} 
\label{Eq:ChoiceMu} 
	\mu := \seminorm{u}_{\mathrm{Lip}(\overline{B_{S+3}})}. 
\end{equation} 
By Proposition~\ref{Prop:WeakSolutionForAllTestFunctions}, we know that $u$ solves $L_K u = f(u)$ in $B_R$ with $R> S+4$. Moreover, by Lemma~\ref{Lemma:DecreaseEnergy} we know that $u$ is bounded. Therefore, by applying repeatedly the estimates \eqref{Eq:C2sEstimate} and \eqref{Eq:Calpha->Calpha+2sEstimateBalls} in balls centered at points in $B_{S+3}$, it is easy to see that $\mu\leq C$ with a positive constant $C$ depending only on $m$, $\s$, $\lambda$, $\Lambda$, and $\norm{f}_{C^1([-1,1])}$ (and thus, independent of $R$ and $S$). Recall that $G' = -f$ and hence $\norm{f}_{C^1([-1,1])} \leq \norm{G}_{C^2([-1,1])}$.

We can now define the set
\begin{equation}
\label{Eq:DefOmegaS}
\Omega_S := \left( \overline{B_{S+2}}\setminus B_S \right) \cup \left(  \overline{B_{S+2}} \cap \{\mu \dist(\cdot,\ccal) \leq 1\}\right),
\end{equation} 
---see Figure~\ref{Fig:PsiSandOmegaS}~(a). It is easy to see that 
\begin{equation}
\label{Eq:MeasureOmegaS}
|\Omega_S| \leq C\,S^{2m-1},
\end{equation}
with a constant $C$ depending only on $m$ and $\mu$. This can be checked following the computations in the proof of the energy estimate for the local equation in Theorem~1.3 of \cite{CabreTerraI}. 

\begin{figure}
	\centering
	\hspace{-0.26\textwidth} 
	\begin{subfigure}{0.21\textwidth}
		\centering
		\definecolor{azul_custom}{RGB}{66,240,209}
\definecolor{lila_custom}{RGB}{201,69,254}
\definecolor{naranja_custom}{RGB}{255,148,0}

\begin{tikzpicture}[y=0.80pt, x=0.80pt, yscale=-1.000000, xscale=1.000000, inner sep=0pt, outer sep=0pt]

%region
\path[fill=azul_custom,line cap=round,miter limit=4.00,line width=1.216pt]
(210.3958,249.7906) .. controls (188.0101,272.1763) and (172.1936,287.9233) ..
(150.1544,309.9625) .. controls (148.2221,309.9625) and (137.7690,309.7049) ..
(127.0089,309.7371) .. controls (127.0089,300.7477) and (127.0826,301.1491) ..
(127.0826,287.3606) .. controls (136.2973,278.1061) and (177.3049,237.1011) ..
(187.7910,226.5786) .. controls (170.0299,214.0930) and (149.7638,207.7938) ..
(127.0312,207.7938) .. controls (127.0312,190.9391) and (126.9993,161.0751) ..
(126.9993,146.8284) .. controls (170.9394,146.8284) and (208.6039,162.4681) ..
(243.0000,193.8560) .. controls (271.9385,225.9716) and (289.9164,263.3638) ..
(289.9164,309.8622) .. controls (260.7310,309.8622) and (257.9656,309.8622) ..
(229.1006,309.8622) .. controls (229.1006,279.3951) and (221.7362,269.4328) ..
(210.3958,249.7907) -- cycle;

%x-axis    
\path[draw=black,line join=miter,line cap=butt,line width=1.5pt]
(126.5055,309.8063) -- (330.4872,309.8063);

%x-axis cap    
\path[draw=black,fill=black,even odd rule,line width=0.497pt]
(330.4872,309.8063) -- (328.1493,312.1289) -- (336.3320,309.8063) --
(328.1493,307.4838) -- cycle;

%y-axis
\path[draw=black,line join=miter,line cap=butt,line width=1.5pt]
(127.0562,310.75) -- (127.0562,99.5223);

%y-axis cap    
\path[draw=black,fill=black,even odd rule,line width=0.497pt] (127.0562,99.5223)
-- (129.3788,101.8602) -- (127.0562,93.6775) -- (124.7336,101.8602) -- cycle;

%big circle
\path[draw=black,line join=miter,line cap=butt,line width=1.4pt]
(127.0312,146.9770) .. controls (166.9982,146.9770) and (210.2478,160.9750) ..
(243.0000,193.8560) .. controls (275.7522,226.7370) and (289.9164,269.7115) ..
(289.9164,309.8622);

%medium circle
\path[draw=black,line join=miter,line cap=butt,line width=1.4pt]
(127.0312,177.3949) .. controls (151.7088,177.3949) and (192.2090,185.8302) ..
(221.4938,215.3997) .. controls (250.7785,244.9692) and (259.4986,284.9193) ..
(259.4986,309.8622);

%small circle    
\path[draw=black,line join=miter,line cap=butt,line width=1.4pt]
(127.0312,207.7933) .. controls (150.2506,207.7933) and (177.3815,214.3180) ..
(199.9500,236.9435) .. controls (222.0483,259.0975) and (229.1002,286.5606) ..
(229.1002,309.8622);

%cone   
\path[draw=black,line join=miter,line cap=butt,miter limit=4.00,even odd
rule,line width=1.57pt] (126.7629,310.0643) -- (326.1869,110.8844);

%cone+1    
\path[draw=black,line join=miter,line cap=butt,even odd rule,line width=0.704pt]
(150.2563,309.8481) -- (318.0555,142.0518);

%cone-1 
\path[draw=black,line join=miter,line cap=butt,even odd rule,line width=0.704pt]
(127.1847,287.2416) -- (299.4946,114.9526); 

%Linea Cota
\path[draw=black,line join=miter,line cap=butt,line width=0.5pt]
(298.3343,142.4919) -- (306.3936,150.7321);

%Flecha Izquierda Cota
\path[draw=black,fill=black,even odd rule,line width=0.200pt]
(298.9657,143.1375) -- (300.2914,143.1522) -- (297.3269,141.4619) --
(298.9510,144.4632) -- cycle;

%Flecha Derecha Cota
\path[draw=black,fill=black,even odd rule,line width=0.200pt]
(305.7622,150.0865) -- (304.4364,150.0718) -- (307.4010,151.7621) --
(305.7769,148.7608) -- cycle;

\node at (315,138) {\normalsize $\mu^{-1}$};
\node at (127,85) {\normalsize $|x''|$};
\node at (350, 311) {\normalsize $|x'|$};
\node at (228,320) {\normalsize $S$};
\node at (255, 320) {\normalsize $S\!+\!1$};
\node at (292, 320) {\normalsize $S\!+\!2$};
\node at (334, 101) {\normalsize $\ccal$};
\node at (264, 250) {\normalsize $\Omega_S$};

\end{tikzpicture}
	\end{subfigure}
	\hspace{0.28\textwidth} 
	\begin{subfigure}{0.21\textwidth}
		\centering		
		\definecolor{azul_custom}{RGB}{66,240,209}
\definecolor{lila_custom}{RGB}{201,69,254}
\definecolor{naranja_custom}{RGB}{255,148,0}

\begin{tikzpicture}[y=0.80pt, x=0.80pt, yscale=-1.000000, xscale=1.000000, inner sep=0pt, outer sep=0pt]

%region-down
\path[scale=0.938,fill=lila_custom, opacity = 0.7, line width=0.400pt] (248.0459,242.6193) ..
controls (275.7060,284.1286) and (276.2013,310.5581) .. (276.7985,330.5197) ..
controls (233.8448,330.5197) and (216.3370,330.5469) .. (160.2334,330.5469) ..
controls (186.2822,304.6380) and (219.4410,271.2569) .. (248.0459,242.6193) --
cycle;

%big circle
\path[draw=black,line join=miter,line cap=butt,line width=1.4pt]
(243.0000,193.8560) .. controls (275.7522,226.7370) and (289.9164,269.7115) ..
(289.9164,309.8622);

%big circle2
\path[draw=naranja_custom,line join=miter,line cap=butt,line width=1.8pt]
(243.0000+11,193.8560+12) .. controls (275.7522-9,226.7370-6) and (289.9164,258.7115) ..
(289.9164,309.8622);

%medium circle
\path[draw=black,line join=miter,line cap=butt,line width=1.4pt]
(221.4938,215.3997) .. controls (250.7785,244.9692) and (259.4986,284.9193) ..
(259.4986,309.8622);

%small circle    
\path[draw=black,line join=miter,line cap=butt,line width=1.4pt]
(199.9500,236.9435) .. controls (222.0483,259.0975) and (229.1002,286.5606) ..
(229.1002,309.8622);

%cone   
\path[draw=black,line join=miter,line cap=butt,miter limit=4.00,even odd
rule,line width=1.57pt] (126.7629,310.0643) -- (326.1869,110.8844);

%cone+1    
\path[draw=black,line join=miter,line cap=butt,even odd rule,line width=0.704pt]
(150.2563,309.8481) -- (318.0555,142.0518);

%Linea Cota
\path[draw=black,line join=miter,line cap=butt,line width=0.5pt]
(298.3343,142.4919) -- (306.3936,150.7321);

%Flecha Izquierda Cota
\path[draw=black,fill=black,even odd rule,line width=0.200pt]
(298.9657,143.1375) -- (300.2914,143.1522) -- (297.3269,141.4619) --
(298.9510,144.4632) -- cycle;

%Flecha Derecha Cota
\path[draw=black,fill=black,even odd rule,line width=0.200pt]
(305.7622,150.0865) -- (304.4364,150.0718) -- (307.4010,151.7621) --
(305.7769,148.7608) -- cycle;

%Cota naranja
\path[draw=black,line join=miter,line cap=butt,line width=0.5pt]
(300, 230) .. controls (300, 235) and (295, 250) ..
(285, 250);

%Flecha cota naranja
\path[draw=black,fill=black,even odd rule,line width=0.200pt]
(285, 250) -- (285+0.9375, 250+0.9375) -- (285-2.3437, 250) --
(285+0.9375, 250-0.9375) -- cycle;

%Flecha Cono Cota (221.4938,215.3997)
\path[draw=black,fill=black,even odd rule,line width=0.200pt]
(210.4938,215.3997) -- (210.4938-1.3258,215.3997-0.0147) -- (210.4938+1.6388,215.3997+1.6759) --
(210.4938+0.0147,215.3997-1.3257) -- cycle;

%Linea Cota
\path[draw=black,line join=miter,line cap=butt,line width=0.5pt]
(210.4938,215.3997) -- (210.4938-10,215.3997-10);

%Linea Cono
\path[draw=azul_custom,line join=miter,line cap=butt,line width=1.8pt]
(126.7629+1,310.0643-1) -- (243.0000+0.7,193.8560-0.7);

%x-axis    
\path[draw=black,line join=miter,line cap=butt,line width=1.5pt]
(126.5055,309.8063) -- (330.4872,309.8063);

%x-axis cap    
\path[draw=black,fill=black,even odd rule,line width=0.497pt]
(330.4872,309.8063) -- (328.1493,312.1289) -- (336.3320,309.8063) --
(328.1493,307.4838) -- cycle;

%y-axis
\path[draw=black,line join=miter,line cap=butt,line width=1.5pt]
(127.0562,310.75) -- (127.0562,99.5223);

%y-axis cap    
\path[draw=black,fill=black,even odd rule,line width=0.497pt] (127.0562,99.5223)
-- (129.3788,101.8602) -- (127.0562,93.6775) -- (124.7336,101.8602) -- cycle;

\node at (315,138) {\normalsize $\mu^{-1}$};
\node at (127,85) {\normalsize $|x''|$};
\node at (350, 311) {\normalsize $|x'|$};
\node at (228,320) {\normalsize $S$};
\node at (255, 320) {\normalsize $S\!+\!1$};
\node at (292, 320) {\normalsize $S\!+\!2$};
\node at (334, 101) {\normalsize $\ccal$};
\node at (298, 220) {\normalsize $\Psi_S\! = \!1$};
\node at (220, 286) {\normalsize $\Psi_S\! = \!-1$};
\node at (190, 195) {\normalsize $\Psi_S\! = \!0$};

\end{tikzpicture}
	\end{subfigure}
	\caption{(a) The set $\Omega_S$. (b) The $1$ and $-1$ level sets of $\Psi_S$ in $\overline{B_{S+2}}\cap \ocal$.}
	\label{Fig:PsiSandOmegaS}
\end{figure}
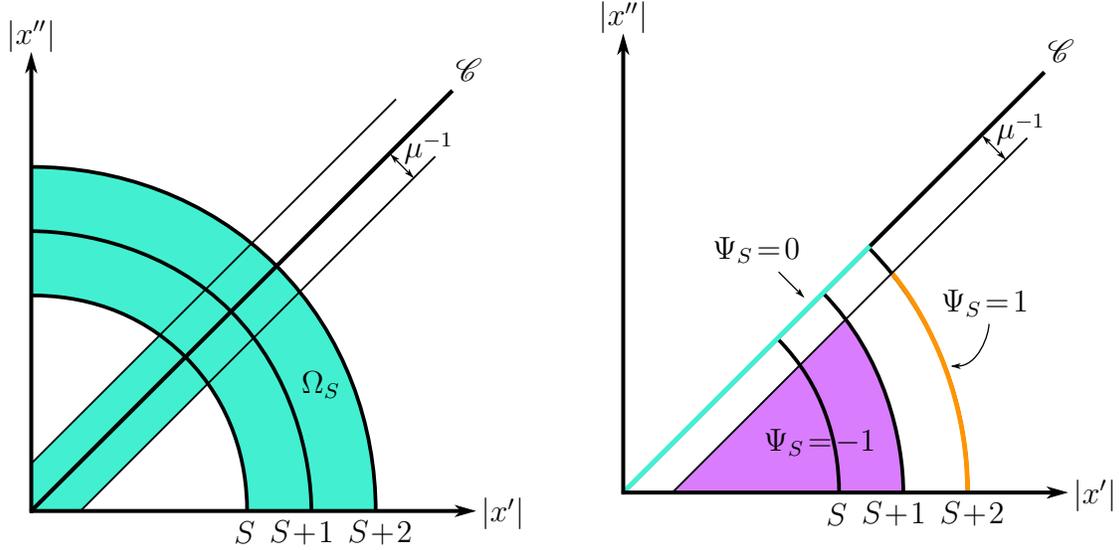

In the following lemma we state the precise properties for the competitor $w$ that suffice to establish the energy estimate given by the right-hand side of \eqref{Eq:EnergyEstimate} for $\ecal(w,B_{S+2})$.

\begin{lemma}
	\label{Lemma:ExistenceCompetitor}
	Let $S\geq 2$ and $R > S + 4$. Let $u\in \widetilde{\H}^K_{0, \mathrm{odd}}(B_R)$ be a doubly radial odd minimizer of the energy \eqref{Eq:Energy} and let $\mu$ be defined by \eqref{Eq:ChoiceMu}. Then, there exists  a function $w:\overline{ B_{S+2}\cap \ocal} \to \R$ satisfying the following:
	
	\begin{enumerate}[label={\normalfont (\textcolor{red}{H\arabic*})}, ref=H\arabic*]
		\item
		\label{Eq:wH1} $-1 \leq w \leq 1$.
		\item
		\label{Eq:wH2} $w$ doubly radial and $w=0$ in $\ccal$.
		\item
		\label{Eq:wH3} $w=u$ on $\partial B_{S+2} \cap \ocal$.
		\item
		\label{Eq:wH4} $w\equiv-1$ on $(B_{S+2}\cap \ocal)\setminus \Omega_S = B_S \cap \{ \mu \dist (\cdot, \ccal) > 1\}$.
		\item
		\label{Eq:wH5} $w\in \Lip(\overline{B_{S+2}})$ with a Lipschitz constant independent of $R$ and $S$. In addition, 
		\begin{equation}
			\label{Eq:wH5Lipschitz}
			|w(x)-w(y)| \leq \frac{C}{\dist(x,\ccal)}|x-y|			
		\end{equation}
		whenever $x,y \in B_{S+1}\cap \ocal$, $\mu \dist(x,\ccal)\geq 1$ and $\mu \dist(y,\ccal)\leq 1$, and with $C$ a constant independent of $R$ and $S$.
	\end{enumerate} 
\end{lemma}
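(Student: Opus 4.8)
\emph{Approach.} The plan is to follow the competitor construction of Savin and Valdinoci, but to replace the purely radial cutoff by a superposition of a radial cutoff and a cutoff \emph{transversal to the Simons cone}, so that the resulting function is doubly radial, vanishes on $\ccal$, and may be extended oddly. First recall, from Lemma~\ref{Lemma:DecreaseEnergy} together with the strict inequality \eqref{Eq:KernelInequality}, that we may assume $0\le u\le 1$ in $\ocal$; since $u$ vanishes on $\ccal$ and, applying \eqref{Eq:C2sEstimate} and \eqref{Eq:Calpha->Calpha+2sEstimateBalls} repeatedly, $\mu$ (defined by \eqref{Eq:ChoiceMu}) is bounded by a constant independent of $R$ and $S$, we obtain $0\le u(x)\le \mu\,\dist(x,\ccal)$ for $x\in\ocal\cap\overline{B_{S+3}}$. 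With the radial cutoff $\phi_S(x)=-1+2\min\{(|x|-S-1)_+,1\}$ of \eqref{Eq:DefOfPhiS}, I would then define on $\overline{B_{S+2}\cap\ocal}$
\[
\Psi_S(x):=\max\Big\{\phi_S(x),\ -\min\{\mu\,\dist(x,\ccal),1\}\Big\},\qquad w:=\min\{u,\Psi_S\}.
\]
Both building blocks of $\Psi_S$ vanish on $\ccal$ (while $\phi_S\le 1$ everywhere), so there $\Psi_S\ge 0$ and hence $w=\min\{0,\Psi_S\}=0$; and since $u$, $\phi_S$, $\dist(\cdot,\ccal)$ are all doubly radial, so is $w$. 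This gives (\ref{Eq:wH2}).

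\emph{Properties (\ref{Eq:wH1})--(\ref{Eq:wH4}).} These follow from a short case analysis using $0\le u\le 1$ in $\ocal$. Since $-1\le\Psi_S\le 1$, we get $-1\le w\le 1$, i.e. (\ref{Eq:wH1}). On $\partial B_{S+2}$ one has $\phi_S=1$, hence $\Psi_S=1\ge u$ and $w=u$, which is (\ref{Eq:wH3}). On $B_S\cap\{\mu\,\dist(\cdot,\ccal)>1\}$ both $\phi_S=-1$ and $-\min\{\mu\,\dist(\cdot,\ccal),1\}=-1$, so $\Psi_S=-1$ and, as $u\ge 0$, $w=\min\{u,-1\}=-1$; a direct check from \eqref{Eq:DefOmegaS} shows this set is exactly $(B_{S+2}\cap\ocal)\setminus\Omega_S$, so (\ref{Eq:wH4}) holds.

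\emph{Property (\ref{Eq:wH5}).} The functions $\phi_S$ and $-\min\{\mu\,\dist(\cdot,\ccal),1\}$ are Lipschitz with constants $2$ and $\mu$ (because $\dist(\cdot,\ccal)$ is $1$-Lipschitz), $u$ is $\mu$-Lipschitz on $\overline{B_{S+2}}$, and taking $\min$/$\max$ does not increase Lipschitz constants, so $w$ is Lipschitz with constant $\max\{2,\mu\}\le C$, independent of $R$ and $S$. For the refined bound \eqref{Eq:wH5Lipschitz}, take $x,y\in B_{S+1}\cap\ocal$ with $\mu\,\dist(x,\ccal)\ge 1\ge\mu\,\dist(y,\ccal)$. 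On $B_{S+1}$ we have $\phi_S\equiv-1$, hence $\Psi_S=-\min\{\mu\,\dist(\cdot,\ccal),1\}\le 0\le u$ there, so $w=-\min\{\mu\,\dist(\cdot,\ccal),1\}$; thus $w(x)=-1$, $w(y)=-\mu\,\dist(y,\ccal)$, and
\[
|w(x)-w(y)|=1-\mu\,\dist(y,\ccal)\le 1-\frac{\dist(y,\ccal)}{\dist(x,\ccal)}=\frac{\dist(x,\ccal)-\dist(y,\ccal)}{\dist(x,\ccal)}\le\frac{|x-y|}{\dist(x,\ccal)},
\]
where the first inequality uses $\mu\,\dist(x,\ccal)\ge 1$ and the last one the reverse triangle inequality for $\dist(\cdot,\ccal)$.

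\emph{Main obstacle.} The real content of the lemma is the \emph{design} of the competitor, not any individual verification: the plain choice $\min\{u,\phi_S\}$ is not doubly radial odd and its odd extension jumps across $\ccal$, while a bare Lipschitz bound $|w(x)-w(y)|\le C|x-y|$ is too weak for the subsequent energy computation because it misbehaves near the cone. The ``cone cutoff'' $-\min\{\mu\,\dist(\cdot,\ccal),1\}$ resolves both issues simultaneously: it is $0$ on $\ccal$, it confines the transition layer to the set $\Omega_S$ of \eqref{Eq:DefOmegaS} (whose measure is $\le CS^{2m-1}$ by \eqref{Eq:MeasureOmegaS}), and it has slope exactly $\mu$ away from $\ccal$, which is precisely what produces the gain $1/\dist(x,\ccal)$ in \eqref{Eq:wH5Lipschitz}. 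The one point that must be checked carefully is that this slope, and the placement of the cap at $\mu\,\dist=1$, are compatible with the definition of $\Omega_S$ and with the bound $-1\le w\le 1$.
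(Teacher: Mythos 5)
Your proof is correct and follows essentially the paper's construction: both take $w=\min\{u,\Psi_S\}$ on $\overline{B_{S+2}\cap\ocal}$, where $\Psi_S$ is the Savin--Valdinoci cutoff $\phi_S$ of \eqref{Eq:DefOfPhiS} modified near the Simons cone through the function $\mu\dist(\cdot,\ccal)$. The only real difference is how the two cutoffs are combined: you set $\Psi_S=\max\{\phi_S,\,-\min\{\mu\dist(\cdot,\ccal),1\}\}$, while the paper takes the product $\Psi_S=\phi_S\,\min\{\mu\dist(\cdot,\ccal),1\}$; the two choices coincide on $B_{S+1}$ and on $\{\mu\dist(\cdot,\ccal)\ge 1\}$ (in particular your $w$ also equals $-1$ on $B_{S+1}\cap\ocal\cap\{\mu\dist(\cdot,\ccal)\ge 1\}$, the slightly stronger fact that the energy estimate later uses when both points are far from the cone), and they differ only in the transition annulus $B_{S+2}\setminus B_{S+1}$, where either choice verifies \eqref{Eq:wH1}--\eqref{Eq:wH5}. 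Your variant even simplifies \eqref{Eq:wH3}: since $\Psi_S\equiv 1$ on $\partial B_{S+2}$ and $0\le u\le 1$ in $\ocal$, the bound $u\le\mu\dist(\cdot,\ccal)$ that the paper needs there (and which you record but never actually use) is not required. One sentence should be corrected: $\phi_S$ does \emph{not} vanish on $\ccal$; what you need on $\ccal$ is only $\Psi_S\ge 0$, which holds because the cone cutoff $-\min\{\mu\dist(\cdot,\ccal),1\}$ vanishes there and $\Psi_S$ is a maximum, so indeed $w=\min\{u,\Psi_S\}=\min\{0,\Psi_S\}=0$ on $\ccal$ as you conclude.
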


\begin{proof}
	To construct the function $w$ we first define
	\begin{equation}
		\label{Eq:DefOfPhiS}
		\phi_S(x):= 
		\begin{cases}
		-1 & \textrm{ if }  \ |x| \leq S+1\,, \\
		-1 + 2 (|x| - S-1)&  \textrm{ if } \ S+1 \leq |x| \leq S+2\,, \\
		1 & \textrm{ if }  \  S+2 \leq |x|\,,
		\end{cases}
	\end{equation}
	which is the function used in \cite{SavinValdinoci-EnergyEstimate}. Now, we modify it in order to make it vanish on $\ccal$. We define
	$$
	\Psi_S(x) :=
	\begin{cases}
	\phi_S (x) \mu\dist(x,\ccal) &  \textrm{ if }  \  \mu\dist(x,\ccal) \leq 1 \,, \\
	\phi_S (x) & \textrm{ if }  \  \mu\dist(x,\ccal) \geq 1\,,
	\end{cases}
	$$
	---see Figure~\ref{Fig:PsiSandOmegaS}~ (b) for a schematic representation.
	
	With this function on hand, we construct the competitor  $w:\overline{B_{S+2} \cap \ocal} \to \R$ as
	$$
	w:= \min \{u, \Psi_S\}.
	$$
	We check next that \eqref{Eq:wH1}-\eqref{Eq:wH5} hold.
	
	First of all, recall that by Lemma~\ref{Lemma:DecreaseEnergy}, $0\leq u \leq 1$ in $\ocal$. Since $-1\leq \Psi_S\leq 1$ in $B_{S+2} \cap \ocal$, \eqref{Eq:wH1} holds trivially. Moreover, since both functions are doubly radial and vanish on $\ccal$, \eqref{Eq:wH2} follows ---recall that the distance to the cone, in $\ocal$, is the doubly radial function given by $(|x'|-|x''|)/\sqrt{2}$. The verification of \eqref{Eq:wH4} is easy, since $\Psi_S \equiv-1 \leq u$ in $(B_{S+2}\cap \ocal)\setminus \Omega_S$.
	
	Now, we check that \eqref{Eq:wH3} holds. On the one hand, if $x\in \partial B_{S+2}\cap \ocal$ and $\mu\dist(x,\ccal) \geq 1$, we have $\Psi_S (x) = \phi_S(x) = 1 \geq u(x)$, and therefore $w(x) = u(x)$. On the other hand, for $x\in \partial B_{S+2}\cap \ocal$ with $\mu\dist(x,\ccal) \leq 1$, we have $\Psi_S (x) = \mu \dist (x,\ccal)$. By \eqref{Eq:ChoiceMu},
	$$ 
	|u(y) - u(z)|\leq \mu |y-z| \quad \text{ for every } y, \ z \in \overline{B_{S+3}},
	$$ 
	and thus, by taking $y=x$ and $z\in \ccal$ to be a point realizing $\dist(x,\ccal)$, we obtain that
	$$ 
	u(x) = |u(x)|\leq \mu |x-z|  = \mu \dist (x,\ccal) = \Psi_S (x).
	$$ 
	Thus, $w(x) = u(x)$ and \eqref{Eq:wH3} holds.
	
	Finally, we verify \eqref{Eq:wH5}. Obviously, $w$ is Lipschitz in $\overline{B_{S+2}}$ since it is the minimum of two Lipschitz functions ---with Lipschitz constants depending only on $\mu$. From this it follows that \eqref{Eq:wH5Lipschitz} also holds, for a large constant $C$ depending on $\mu$, at points where $\dist(x,\ccal)\leq 2/\mu$. Finally, assume that $\dist(x,\ccal)\geq 2/\mu$. Then, by using the triangular inequality and the definition of distance to the Simons cone, we have
	$$
	|x-y| \geq 	\dist(x,\ccal) - \dist(y,\ccal) \geq \dfrac{1}{2}\dist(x,\ccal).
	$$
	From this and \eqref{Eq:wH1}, we readily deduce that \eqref{Eq:wH5Lipschitz} holds for a large constant $C$.	
\end{proof}

To estimate the energy of $w$ in $B_{S+2}$, it will be important to control the double integrals in the nonlocal energy  first in the set where $|x-y|\geq d_S(x)$, and then in $\{|x-y|\leq d_S(x)\}$, where 
$$ 
d_S (x) 
:= \min \{\dist(x, \partial B_{S+1}),\mu \dist(x,\ccal)\} \quad \text{ for } x\in B_S\,.
$$
A similar technicality was used by Savin and Valdinoci in \cite{SavinValdinoci-EnergyEstimate} with the function $\dist(x, \partial B_{S+1})$, and it is the key point to get \eqref{Eq:EnergyEstimate}. We can now establish the energy estimate of Theorem~\ref{Th:EnergyEstimate}. 

\begin{proof}[Proof of Theorem~\ref{Th:EnergyEstimate}]
	Take $w$ constructed in Lemma~\ref{Lemma:ExistenceCompetitor} and extend it oddly through $\ccal$ and then to coincide with $u$ outside $B_{S+2}$.  Hence, since $u$ is a doubly radial odd minimizer in $B_R$, and $w$ an admissible competitor, $\ecal (u, B_R) \leq \ecal (w, B_R)$. Moreover, $u\equiv w$ in $\R^{2m}\setminus B_{S+2}$, and thus it follows that
	$$ 
	\ecal (u, B_{S+2}) \leq \ecal (w, B_{S+2}).
	$$
	By the monotonicity of the energy $\ecal$ by inclusions we get
	$$ 
	\ecal (u, B_{S}) \leq \ecal (w, B_{S+2}). 
	$$
	Therefore, to obtain the desired result it remains to estimate $\ecal (w, B_{S+2})$.

	In the following inequalities, the letter $C$ will be a constant depending only on $m$, $\s$, $\lambda$, $\Lambda$, and $\norm{G}_{C^2([-1,1])}$. Recall that $\mu$ defined in \eqref{Eq:ChoiceMu} depends only on these quantities.
	
	First, note that using the upper bound for the kernel $K$ ---\eqref{Eq:Ellipticity}--- and the change of variables given by $(\cdot)^\star$, it follows that
	$$ 
	\ecal(w,B_{S+2}) \leq C \int_{B_{S+2}\cap \ocal} \d x \int_{\R^{2m}} \d y \frac{|w(x)-w(y)|^2}{|x-y|^{2m+2\s}} + \int_{B_{S+2}} G(w) \d x. 
	$$
	Now we estimate separately the potential and kinetic energies.
	
	\medskip
	
	\textbf{Estimate for the potential energy.}
	Since, $w=\pm 1$ in $B_{S+2} \setminus \Omega_S$ by \eqref{Eq:wH4}, $-1\leq w\leq 1$ by \eqref{Eq:wH1}, and $G(1) = G(-1) = 0$, it is clear that
	$$ 
	\int_{B_{S+2}} G(w) = \int_{\Omega_S} G(w) \leq C |\Omega_S| \leq C S^{2m-1}.
	$$  
	Here we have used \eqref{Eq:MeasureOmegaS}.
	
	\medskip
	
	\textbf{Estimate for the kinetic energy.}
	We split the integral in three terms, as follows.
	\begin{align*}
	\int_{B_{S+2}\cap \ocal} \d x \int_{\R^{2m}} \d y \frac{|w(x)-w(y)|^2}{|x-y|^{2m+2\s}} &= \int_{\Omega_{S}\cap \ocal} \d x \int_{\R^{2m}} \d y \ \frac{|w(x)-w(y)|^2}{|x-y|^{2m+2\s}} \\
	&\ \ + \int_{(B_S\setminus \Omega_S)\cap \ocal} \d x \int_{B_{S+1}\cap \ocal} \d y \ \frac{|w(x)-w(y)|^2}{|x-y|^{2m+2\s}} \\
	&\ \ + \int_{(B_S\setminus \Omega_S)\cap \ocal} \d x \int_{(B_{S+1}\cap \ocal)^c} \d y \ \frac{|w(x)-w(y)|^2}{|x-y|^{2m+2\s}} \\
	&=: I_1+I_2+I_3.
	\end{align*}
	Here $(\cdot)^c$ denotes the complementary set. Now we control each term separately.
	
	We estimate the first integral:
	\begin{align*}
	I_1 &= \int_{\Omega_{S}\cap \ocal} \d x \int_{\R^{2m}} \d y \  \frac{|w(x)-w(y)|^2}{|x-y|^{2m+2\s}} \\
	&= \int_{\Omega_{S}\cap \ocal} \d x \int_{\{|x-y|\leq 1\}} \d y \ \frac{|w(x)-w(y)|^2}{|x-y|^{2m+2\s}} + \int_{\Omega_{S}\cap \ocal} \d x \int_{\{|x-y|\geq 1\}} \d y \ \frac{|w(x)-w(y)|^2}{|x-y|^{2m+2\s}} \\
	&\leq C \int_{\Omega_{S}\cap \ocal} \d x \int_0^1 \d r \ r^{1-2\s} + C \int_{\Omega_{S}\cap \ocal} \d x \int_1^\infty \d r \ r^{-1-2\s} \\
	&= C |\Omega_S| \leq C \ S^{2m-1}.
	\end{align*}
	We have used that $w$ is Lipschitz in $\overline{B_{S+3}}$ ---see \eqref{Eq:wH5} and \eqref{Eq:ChoiceMu}--- to bound the first integral, while the second one is controlled using only that $w$ is bounded, by \eqref{Eq:wH1}. 
	
	Next, we estimate $I_2$. To do it, we first claim that, if $|x-y|\leq d_S(x)$, then
	\begin{equation}
	\label{Eq:EnergyEstimateProofLipschitz}
	|w(x)-w(y)| \leq \dfrac{C}{d_S(x)} |x-y|	
	\end{equation}
	for every $x\in (B_S\setminus \Omega_S)\cap \ocal$ and $y \in B_{S+1}\cap \ocal$. Recall that $d_S$ is defined as $d_S(x)=\min \{\dist(x,\partial B_{S+1}) , \mu \dist (x, \ccal)\}$, and therefore it suffices to show that
	$$
	|w(x)-w(y)| \leq \dfrac{C}{\dist (x, \ccal)} |x-y|
	$$
	for $x\in B_S \cap \ocal$ with $\mu \dist (x, \ccal) \geq 1$ and $y \in B_{S+1}\cap \ocal$ (recall that $C$ may depend on $\mu$). Now, if we assume that $\mu \dist (y, \ccal) \geq 1$, it follows that $w(x)=w(y)=-1$ and \eqref{Eq:EnergyEstimateProofLipschitz} is trivially true. On the other hand, if we assume that $\mu \dist (y, \ccal) \leq 1$, then \eqref{Eq:EnergyEstimateProofLipschitz} follows from \eqref{Eq:wH5}. Therefore, the claim is proved.
	
	Using \eqref{Eq:EnergyEstimateProofLipschitz}, we proceed as before splitting the integrals to obtain
	\begin{align*}
	I_2 &= \int_{(B_S\setminus \Omega_S)\cap \ocal} \d x \int_{B_{S+1}\cap \ocal} \d y \  \frac{|w(x)-w(y)|^2}{|x-y|^{2m+2\s}}   \\
	&\leq \int_{(B_S\setminus \Omega_S)\cap \ocal} \d x \int_{ \{|x-y|\leq d_S(x)\} \cap B_{S+1}\cap \ocal} \d y \  \frac{|w(x)-w(y)|^2}{|x-y|^{2m+2\s}} \\
	& \quad \quad + \int_{(B_S\setminus \Omega_S)\cap \ocal} \d x \int_{\{|x-y|\geq d_S(x)\}} \d y \ \frac{|w(x)-w(y)|^2}{|x-y|^{2m+2\s}} \\
	&\leq C \int_{(B_S\setminus \Omega_S)\cap \ocal} d_S(x)^{-2\s} \d x .
	\end{align*}
	Here we have used \eqref{Eq:EnergyEstimateProofLipschitz} to estimate the first term, while for the second one we have only used that $w$ is bounded, by \eqref{Eq:wH1}. The last integral for $d_S(x)^{-2\s}$ will be bounded later on.
	
	Next, we estimate $I_3$. To do it, we first claim that if $x\in (B_S\setminus \Omega_S) \cap \ocal$ and $y\in (B_{S+1}\cap \ocal)^c = \ical \cup B_{S+1}^c$, then $|x-y|\geq c d_S(x)$ for some constant $c>0$ depending only on $\mu$. Indeed, on the one hand it is clear that, if $y\in B_{S+1}^c$, then $|x-y|\geq \dist(x,\partial B_{S+1})\geq d_S(x)$. On the other hand, if $y\in \ical$, then $|x-y|\geq \dist(x,\ccal) \geq  d_S(x) / \mu$.
	
	By the previous claim, since $w$ is bounded, we obtain
	\begin{align*}
	I_3 &= \int_{(B_S\setminus \Omega_S)\cap \ocal} \d x \int_{(B_{S+1}\cap \ocal)^c} \d y \  \frac{|w(x)-w(y)|^2}{|x-y|^{2m+2\s}} \\
	&\leq \int_{(B_S\setminus \Omega_S)\cap \ocal} \d x \int_{\{|x-y|\geq C d_S(x)\}} \d y \ \frac{|w(x)-w(y)|^2}{|x-y|^{2m+2\s}} \\
	&\leq C \int_{(B_S\setminus \Omega_S)\cap \ocal}  d_S(x)^{-2\s} \d x .
	\end{align*}
	
	Now, we add up $I_1, I_2$, and $I_3$ to get
	\begin{equation}
	\label{Eq:EnergyEstimateProofLastEstimate}
	\ecal(w,B_{S+2}) \leq C \left( \int_{(B_S\setminus \Omega_S)\cap \ocal}  d_S(x)^{-2\s} \d x + S^{2m-1}\right).
	\end{equation}
	We conclude the proof by estimating the integral of $d_S(x)^{-2\s}$, as follows.
	\begin{equation}
	\label{Eq:EnergyEstimatedS}
	\begin{split}
	\int_{B_{S+2}\setminus \Omega_S} d_S(x)^{-2\s} \d x &= \int_{B_{S}\cap \{\mu \dist(x,\ccal)>1\}} \!\!\!\! \max\{\dist(x,\partial B_{S+1})^{-2\s}, (\mu\dist(x,\ccal))^{-2\s}\} \d x \\
	& \leq \int_{B_S} \left( S+1-|x| \right)^{-2\s} \d x \\
	& \quad \quad + C \int_{B_{S}\cap \{\mu \dist(x,\ccal)>1\}} \dist(x,\ccal)^{-2\s} \d x.
	\end{split}
		\end{equation}
	We next control these two integrals.

	The first integral can be estimated by using spherical coordinates and the change $\tau = r/(S+1)$. Indeed,
	\begin{align*}
	\int_{B_S} \left( S+1-|x| \right)^{-2\s} \d x & = C \int_0^S \dfrac{r^{2m-1}}{(S+1-r)^{2\s}} \d r \\
	& \leq C (S+1)^{2m - 2\s}\int_0^{1 - \frac{1}{S+1}} \dfrac{\tau^{2m-1}}{(1-\tau)^{2\s}} \d \tau\\
	&\leq C (S+1)^{2m - 2\s}\int_0^{1 - \frac{1}{S+1}} (1-\tau)^{-2\s} \d \tau \\
	& \leq \begin{cases}
	C \ S^{2m-2\s}\ \ \ \ &\textrm{if } \ \ \s\in(0,1/2),\\
	C\ S^{2m-1}\,\log S\ \ \ \ &\textrm{if } \ \ \s=1/2,\\
	C \ S^{2m-1}\ \ \ \ &\textrm{if } \ \ \s\in(1/2,1).\\
	\end{cases}
	\end{align*}
	To bound the second integral (note that it only appears in the proof when $1/ \mu \leq S$), we write it in the $(\overline{s},\overline{t})$ variables in $\R^2$, where
	$$
	\overline{s} := \dfrac{|x'|+|x''|}{\sqrt{2}} \, \quad \text{ and } \, \quad  \overline{t} := \dfrac{|x'|-|x''|}{\sqrt{2}}\,.
	$$
	Note that $\overline{t}$ is the signed distance to the cone (see Lemma~4.2 in \cite{CabreTerraI}). Thus, still denoting by $B_S$ the ball of radius $S$ in $\R^2$,
	\begin{align*}
	\int_{B_{S}\cap \{\mu \dist(x,\ccal)>1\}} \dist(x,\ccal)^{-2\s} \d x &\leq C \int \int_{B_{S}\cap \{\overline{s}\geq|\overline{t}|>1/\mu\}} |\overline{t}|^{-2\s} \, (\overline{s}^2-\overline{t}^2)^{m-1} \d \overline{s}\d \overline{t} \\
	& \leq C \int \int_{B_{S}\cap \{\overline{s}\geq|\overline{t}|>1/\mu\}} |\overline{t}|^{-2\s} \, \overline{s}^{2m-2} \d \overline{s}\d \overline{t} \\
	& \leq C\, \int_{1/\mu}^S \d \overline{t}   \ \overline{t}^{-2\s}\int_0^S \d \overline{s}\  \overline{s}^{2m-2} \\
	& \leq \begin{cases}
	C \ S^{2m-2\s}\ \ \ \ &\textrm{if } \ \ \s\in(0,1/2),\\
	C\ S^{2m-1}\,\log S\ \ \ \ &\textrm{if } \ \ \s=1/2,\\
	C \ S^{2m-1}\ \ \ \ &\textrm{if } \ \ \s\in(1/2,1).\\
	\end{cases}
	\end{align*}
	
	Using these two estimates, combined with \eqref{Eq:EnergyEstimateProofLastEstimate} and \eqref{Eq:EnergyEstimatedS},  the desired result follows by noticing that the term $C S^{2m-1}$ in \eqref{Eq:EnergyEstimateProofLastEstimate} is of lower order when $\s \leq 1/2$.
\end{proof}

%%%%%%%%%%%%%%%%%%%%%%%%%%%%%%%%%%%%%%%%%%%%%%%%%%%%%%%%%%%%%%%%%%%%%%%%%%%%%%%
%%%%%%%%%%%%%%%%%%%%%%%%%%%%%%%%%%%%%%%%%%%%%%%%%%%%%%%%%%%%%%%%%%%%%%%%%%%%%%%
\section{Existence of saddle-shaped solutions: variational method}
%%%%%%%%%%%%%%%%%%%%%%%%%%%%%%%%%%%%%%%%%%%%%%%%%%%%%%%%%%%%%%%%%%%%%%%%%%%%%%%
%%%%%%%%%%%%%%%%%%%%%%%%%%%%%%%%%%%%%%%%%%%%%%%%%%%%%%%%%%%%%%%%%%%%%%%%%%%%%%%
\label{Sec:Existence}

In this section we establish the existence of saddle-shaped solutions to the integro-differential Allen-Cahn equation. The proof is based on the direct method of the calculus of variations, and it uses most of the results appearing in the previous sections.

\begin{proof}[Proof of Theorem~\ref{Th:Existence}]
	Since $\ecal(w,B_R)$ is bounded from below ---by $0$---, we can take a minimizing sequence in $\widetilde{\H}^K_{0, \,\mathrm{odd}}(B_R)$, that we call $u_R^j$ with $j\in \Z^+$. Note that, by Lemma~\ref{Lemma:DecreaseEnergy} we can assume that $-1 \leq u_R^j \leq 1$ and that $u_R^j \geq 0$ in $\ocal$ and  $u_R^j \leq 0$ in $\ical$. 
	
	Now, using $\eqref{Eq:Ellipticity}$, $G\geq 0$, and the fact that $u_R^j$ is a minimizing sequence, we deduce using \eqref{Eq:Energy} that 
	$$
	[u_R^j]^2_{H^\s(B_R)} \leq \dfrac{c_{n,\s}}{\lambda}  [u_R^j]^2_{\H^K(B_R)}\leq \dfrac{2 c_{n,\s}}{\lambda}\ecal(u_R^j,B_R) \leq C
	$$
	for a constant $C$ that does not depend on $j$. Therefore, by combining this with the fractional Poincaré inequality (recall that $u_R^j \equiv 0$ in $\R^{2m}\setminus B_R$) we get that the sequence $\{u_R^j\}$ is bounded in $H^\s(B_R)$. Hence, by the compact embedding $H^\s(B_R) \subset \subset L^2(B_R)$ (see \cite{Adams} and Theorem~7.1 of \cite{HitchhikerGuide}), there exists a subsequence, still denoted by $u_R^j$,  that converges to some doubly radial $u_R \in L^2(B_R)$, and thus, a.e. in $B_R$. By Fatou's lemma, we have
	$$
	\ecal(u_R, B_R)
	\leq \liminf_{j\to \infty} \ecal(u_R^j, B_R) = \inf \setcond{\ecal(w, B_R)}{w \in \widetilde{\H}^K_{0, \,\mathrm{odd}}(B_R)}.
	$$
	Therefore, $u_R \in \widetilde{\H}^K(B_R)$. In addition, $u_R(x) = - u_R(x^\star)$ for every $x\in \R^{2m}$, and $u_R \equiv 0 $ in $\R^{2m} \setminus B_R$. Thus, $u_R$ is a minimizer of $\ecal(\cdot, B_R)$ in $\widetilde{\H}^K_{0, \,\mathrm{odd}}(B_R)$. Moreover, it satisfies $-1\leq u_R \leq 1$ in $B_R$ and $u_R\geq 0$ in $\ocal$. As a consequence, by Proposition~\ref{Prop:WeakSolutionForAllTestFunctions} and the regularity for operators in $L_K$ (see Remark~\ref{Remark:InteriorRegularity}), we have that $u_R$ is a classical solution to
	$$
	 \beqc{\PDEsystem}
	 L_K  u_R &=& f(u_R) & \textrm{ in } B_R\,,\\
	 u_R &=& 0 & \textrm{ in }\R^{2m} \setminus B_R.
	 \eeqc
	$$	
	
	The next step is to pass to the limit in $R$ to obtain a solution in $\R^{2m}$. This is done using a compactness argument. Let $S>0$ and consider the family $\{u_R\}$, for $R> S + 1$, of solutions to $L_K u_R = f(u_R)$ in $B_{S}$. Note first that, if $w$ solves $L_K w = f(w)$ in $B_\rho$ and  $|w|\leq 1$ in $\R^{2m}$ with $f\in C^{\alpha}([-1,1])$ for some $\alpha > 0$, the combination of the estimates \eqref{Eq:C2sEstimate} and \eqref{Eq:Calpha->Calpha+2sEstimateBalls} yields
	$$
	\norm{w}_{C^{2\s + \varepsilon}(B_{\rho/8})} \leq C \bpar{n,\ \s ,\ \lambda,\ \Lambda ,\ \norm{f}_{C^{\alpha}([-1,1])} }\,.
	$$
	for some $\varepsilon > 0$.  By applying this to $u_R$ in balls of radius $\rho = 1$ and centered at points in $\overline{B_{S}}$, we obtain a uniform $C^{2\s + \varepsilon}(\overline{B_S})$ bound for $u_R$. By the Arzelà-Ascoli theorem, as $R\to +\infty$, a subsequence of $\{u_R\}$ converges in $C^{2\s + \varepsilon/2}(\overline{B_S})$ to a (pointwise) solution in $B_S$. Taking now $S = 1,2,3,\ldots$ and using a diagonal argument, we obtain a sequence $u_{R_j}$ converging uniformly on compacts in the $C^{2\s + \varepsilon/2}$ norm to a solution $u \in C^{2\s + \varepsilon/2}(\R^{2m})$ of \eqref{Eq:NonlocalAllenCahn}.
	
	Therefore, we have obtained a solution $u$ to $L_K u = f(u)$ in $\R^{2m}$ which is doubly radial. Furthermore, $u$ is odd with respect to the Simons cone $\ccal$, i.e., $u(x) = -u(x^\star)$ for $x\in \R^{2m}$, and $0 \leq u\leq 1$ in $\ocal$.
	
	Finally, we show that $0<u<1$ in $\ocal$. This will ensure that $u$ is a saddle-shaped solution. First, note that $|u| < 1$ by the usual strong maximum principle (since $u$ vanishes at $\ccal$ and is continuous, we have $u \not \equiv 1$  and $u\not\equiv -1$ in $\R^{2m}$). Let us show now that $u\not\equiv 0$. To do this, we use the energy estimate of Theorem~\ref{Th:EnergyEstimate}. That is, if we consider $u_R$ the minimizer of $\ecal(\cdot, B_R)$ with $R > 8$, we have
	$$
	\ecal (u_R,B_S) \leq \begin{cases}
	C \ S^{2m-2\s}\ \ \ \ &\textrm{if } \ \ \s\in(0,1/2),\\
	C\ S^{2m-2\s} \log S \ \ \ \ &\textrm{if } \ \ \s=1/2,\\
	C \ S^{2m-1}\ \ \ \ &\textrm{if } \ \ \s\in(1/2,1),\\
	\end{cases} $$
	for every $2 < S < R-5$ and with a constant $C$ independent of $R$ and $S$. By letting $R \to \infty$ we obtain the same estimate for $u$. By contradiction, assume $u\equiv 0$. Then, the previous estimate leads to
	$$
	c_m G(0)S^{2m} = \ecal(0, B_S) \leq \begin{cases}
	C \ S^{2m-2\s}\ \ \ \ &\textrm{if } \ \ \s\in(0,1/2),\\
	C\ S^{2m-2\s} \log S\ \ \ \ &\textrm{if } \ \ \s=1/2,\\
	C \ S^{2m-1}\ \ \ \ &\textrm{if } \ \ \s\in(1/2,1),\\
	\end{cases} $$
	and, since $G(0)>0$, this is a contradiction for $S$ large enough. Therefore, $u \not \equiv 0$ and the strong maximum principle for odd functions (see Proposition~\ref{Prop:MaximumPrincipleForOddFunctions}) yields that $u>0$ in $\ocal$. 
\end{proof}

%---------------------------------------------------------------------%
\appendix
%---------------------------------------------------------------------%

%%%%%%%%%%%%%%%%%%%%%%%%%%%%%%%%%%%%%%%%%%%%%%%%%%%%%%%%%%%%%%%%%%%%%%%%%%%%%%%
%%%%%%%%%%%%%%%%%%%%%%%%%%%%%%%%%%%%%%%%%%%%%%%%%%%%%%%%%%%%%%%%%%%%%%%%%%%%%%%
\section{Some auxiliary results on convex functions}
\label{Sec:AuxiliaryResults}
%%%%%%%%%%%%%%%%%%%%%%%%%%%%%%%%%%%%%%%%%%%%%%%%%%%%%%%%%%%%%%%%%%%%%%%%%%%%%%%
%%%%%%%%%%%%%%%%%%%%%%%%%%%%%%%%%%%%%%%%%%%%%%%%%%%%%%%%%%%%%%%%%%%%%%%%%%%%%%%

In this appendix we present some auxiliary results concerning convex functions. The main result, used in the proof of Theorem~\ref{Th:SufficientNecessaryConditions}, is the following.

\begin{proposition}
	\label{Prop:EquivalenceK(sqrt)Convex<->Inequality}
	Let $K:(0, +\infty) \to (0,+\infty)$ be a measurable function. Then, the following statements are equivalent:
	\begin{enumerate}
		\item[i)] $K(\sqrt{\cdot})$ is strictly convex in $(0, +\infty)$.
		\item[ii)] For every positive constants $c_1$ and $c_2$, the function $g:(0,1/c_2)\to \R$ defined by
		\begin{equation}
		\label{Eq:DefinitiongFromK}
		g(z) := K(c_1 \sqrt{1 + c_2z}) + K(c_1 \sqrt{1 - c_2z})
		\end{equation}
		satisfies 
		\begin{equation}
		\label{Eq:InequalityConvexFunctions}
		g(A) + g(D) \geq g(B) + g(C)
		\end{equation}
		whenever $A$, $B$, $C$, and $D$ belong to $(0, 1/c_2)$ and satisfy
		$$
		A = \max\{A,\, B,\, C,\, D\} \quad \text{ and } \quad A + D \geq B + C.
		$$
		
		In addition, still assuming $A = \max\{A,\, B,\, C,\, D\}$ and $A + D \geq B + C$, equality holds in \eqref{Eq:InequalityConvexFunctions} if and only if the sets $\{A,D\}$ and $\{B,C\}$ coincide.	
	\end{enumerate}
\end{proposition}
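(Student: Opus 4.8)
The plan is to translate the four-point property of $g$ into an ordinary (strict) convexity statement for $h:=K(\sqrt{\cdot})$, and back. Recall that, for a measurable function, the notion of strict convexity used here — strict midpoint convexity, as spelled out after Theorem~\ref{Th:SufficientNecessaryConditions} — is equivalent, by the classical regularity theory of midpoint-convex functions, to strict convexity in the usual sense; so I treat statement (i) as ``$h$ is strictly convex''. Since $K\big(c_1\sqrt{1\pm c_2 z}\,\big)=h\big(c_1^2(1\pm c_2 z)\big)$, the function in \eqref{Eq:DefinitiongFromK} is
\begin{equation*}
g(z)=h\big(c_1^2(1+c_2 z)\big)+h\big(c_1^2(1-c_2 z)\big),\qquad z\in(0,1/c_2),
\end{equation*}
and, setting $u:=c_1^2(1+c_2 z)$, the map $z\mapsto u$ is an increasing affine bijection of $(0,1/c_2)$ onto $(c_1^2,2c_1^2)$ sending $g$ to $\phi_v(u):=h(u)+h(2v-u)$ with $v:=c_1^2$. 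Since the conditions ``$A=\max\{A,B,C,D\}$'', ``$A+D\ge B+C$'' and ``$\{A,D\}=\{B,C\}$'' are all invariant under an increasing affine reparametrization of the variable, statement (ii) is equivalent to: for every $v>0$, $\phi_v$ obeys the four-point inequality \eqref{Eq:InequalityConvexFunctions} on $(v,2v)$ with equality exactly when $\{A,D\}=\{B,C\}$. The only analytic input used is the elementary lemma that a convex, nondecreasing $\phi$ on an interval satisfies $\phi(A)+\phi(D)\ge\phi(B)+\phi(C)$ whenever $A=\max\{A,B,C,D\}$ and $A+D\ge B+C$ — a two-point weak-submajorization inequality — together with the fact that the inequality is strict unless $\{A,D\}=\{B,C\}$ when $\phi$ is in addition strictly convex and strictly increasing.

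For (i) $\Rightarrow$ (ii): assume $h$ strictly convex. Then $z\mapsto h\big(c_1^2(1\pm c_2 z)\big)$ are strictly convex (a strictly convex function composed with a non-constant affine map), hence $g$ is strictly convex on $(0,1/c_2)$; and $g$ is strictly increasing there, since for $z_1<z_2$ the difference $g(z_2)-g(z_1)$ is the increment of $h$ over the interval $\big[c_1^2(1+c_2z_1),\,c_1^2(1+c_2z_2)\big]$ minus the increment of $h$ over the equally long interval $\big[c_1^2(1-c_2z_2),\,c_1^2(1-c_2z_1)\big]$, the former lying strictly to the right of the latter, so that the strict monotonicity of the difference quotients of $h$ makes it positive. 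Applying the elementary lemma to $\phi=g$ on $(0,1/c_2)$ gives \eqref{Eq:InequalityConvexFunctions} and the strictness statement; when $\{A,D\}=\{B,C\}$, equality is immediate.

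For (ii) $\Rightarrow$ (i): assume (ii) for every $c_1,c_2>0$. Feeding into (ii) the configuration $B=C=D=z_1$, $A=z_2>z_1$ (admissible, since $A=\max$ and $A+D=z_1+z_2\ge 2z_1=B+C$, with $\{A,D\}\ne\{B,C\}$) forces $g(z_2)>g(z_1)$, so every $g$ is strictly increasing; feeding in $A=q_2$, $D=q_1$, $B=C=(q_1+q_2)/2$ with $q_1<q_2$ (admissible, since $A=\max$ and $A+D=B+C$, with $\{A,D\}\ne\{B,C\}$) forces $g(q_1)+g(q_2)>2g\big((q_1+q_2)/2\big)$, so every $g$ is strictly midpoint convex. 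Being measurable, every $g$ — equivalently every $\phi_v$ — is therefore strictly convex, in particular continuous. From the continuity of all the $\phi_v$ one concludes that $h$ is continuous on $(0,+\infty)$: differences $w\mapsto h(w)-h(w+\delta)$ arise as $\phi_{v_1}-\phi_{v_2}$ for suitable $v_1,v_2$ and are hence continuous, which forces the oscillation of $h$ to be independent of the point and then to vanish, since otherwise some $\phi_v$ could not be continuous. Finally, fix $0<\tau_1<\tau_2$, put $v=(\tau_1+\tau_2)/2$, and pick $c_1,c_2$ and $z^\ast\in(0,1/c_2)$ with $c_1^2=v$, $c_1^2(1+c_2z^\ast)=\tau_2$ and $c_1^2(1-c_2z^\ast)=\tau_1$ (possible because $\tau_1>0$). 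By continuity of $h$ one has $\lim_{z\to0^+}g(z)=2h(v)$, and since $g$ is strictly increasing on $(0,1/c_2)$ this yields $K(\sqrt{\tau_1})+K(\sqrt{\tau_2})=h(\tau_1)+h(\tau_2)=g(z^\ast)>2h(v)=2K\big(\sqrt{(\tau_1+\tau_2)/2}\,\big)$; as $\tau_1,\tau_2$ were arbitrary this is exactly (i).

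I expect the only genuinely delicate point to be the continuity of $h$ in the converse implication: it does follow from the convexity of all the auxiliary functions $\phi_v$, but because $K$ is merely measurable the deduction is not entirely automatic and needs the observation that the oscillation of $h$ is constant and hence zero. The rest is bookkeeping around the two-point majorization inequality for convex nondecreasing functions.
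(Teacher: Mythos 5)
Your implication (i) $\Rightarrow$ (ii) is correct and is essentially the paper's own argument: you observe that $g$ is strictly convex and strictly increasing on $(0,1/c_2)$ and then invoke the two-point weak-submajorization inequality for nondecreasing convex functions, together with its equality analysis; this is precisely Lemma~\ref{Lemma:ConvexFunctions} (a)$\Rightarrow$(c) and Remark~\ref{Remark:StrictConvexity}, which you quote as a standard fact instead of proving (acceptable, since the supporting-line proof is routine). The identification of strict convexity in \eqref{Eq:SqrtConvex} with ordinary strict convexity for measurable $K$ is also fine.

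The converse, however, has a genuine gap at the continuity step, which you yourself flag but do not actually close. From (ii) you correctly obtain that every $g$, equivalently every $\phi_v(u)=h(u)+h(2v-u)$, is strictly convex (via measurability and Sierpi\'nski) and hence that $w\mapsto h(w)-h(w+\delta)$ is continuous for each $\delta>0$, so the oscillation of $h$ is the same at every point. But the asserted conclusion that this constant oscillation must ``vanish, since otherwise some $\phi_v$ could not be continuous'' does not follow from what precedes it: continuity of all the symmetric sums $h(u)+h(2v-u)$ and of all the differences $h(w)-h(w+\delta)$ is perfectly compatible with everywhere-infinite oscillation --- any nonlinear additive (Hamel-basis) function $q$ satisfies $q(u)+q(2v-u)\equiv q(2v)$ and $q(w)-q(w+\delta)\equiv -q(\delta)$ yet is nowhere continuous --- so cancellation along the anti-diagonals cannot be excluded by oscillation bookkeeping alone; measurability must be used a second time, and your write-up does not do so. The step is repairable: since $K>0$ and each convex $\phi_v$ is locally bounded on $(v,2v)$, $h$ is locally bounded, hence locally integrable, and writing $h-h\ast\rho_\varepsilon(w)=\int\rho_\varepsilon(s)\,\{h(w)-h(w-s)\}\,\mathrm{d}s$ and using the continuity in $w$ of each integrand together with dominated convergence exhibits $h$ as a continuous function plus $h\ast\rho_\varepsilon$, so $h$ is continuous; alternatively, Lusin's theorem applied at a density point shows that the symmetric limit $\lim_{\varepsilon\to0^+}\{h(v+\varepsilon)+h(v-\varepsilon)\}$ equals $2h(v)$, which is all your final inequality $g(z^\ast)>\lim_{z\to0^+}g(z)=2h(v)$ really uses. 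Note finally that the paper's proof of (ii) $\Rightarrow$ (i) never raises the continuity of $h$ at all: its target is only the midpoint inequality in \eqref{Eq:SqrtConvex}, and it transfers the strict convexity of the even extension of $g$ to $h_L=h(\cdot)+h(L-\cdot)$ and then applies Lemma~\ref{Lemma:ConvexFunctions} (b)$\Rightarrow$(a); your detour through genuine convexity and continuity of $h$ is where the extra analytic input enters, and as written that input is missing.
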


To prove this proposition, we need a lemma on convex functions.

\begin{lemma}
	\label{Lemma:ConvexFunctions}
	Let $0<M\leq +\infty$ and let $h:(0,M)\to \R$ be a measurable nondecreasing function. Then, the following statements are equivalent.
	
	\begin{enumerate}[label=(\alph*)]
		\item $h$ is convex in $(0,M)$.
		
		\item For every $0\leq L\leq 2M$, the function $h_L (x) := h(x) + h(L-x)$ is convex in $(\max \{L-M,0\}, \min \{L,M\})$.
		
		\item For every $A$, $B$, $C$, $D$ in the interval $(0,M)$ such that
		$$
		A = \max\{A,\, B,\, C,\, D\} \quad \text{ and } \quad A + D \geq B + C\,,
		$$
		it holds
		\begin{equation}
		\label{Eq:InequalityConvexFunctionsbis}
		h(A) + h(D) \geq h(B) + h(C)\,.
		\end{equation}
	\end{enumerate}
\end{lemma}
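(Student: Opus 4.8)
The plan is to prove the cycle of implications $(a)\Rightarrow(b)\Rightarrow(c)\Rightarrow(a)$.

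The implication $(a)\Rightarrow(b)$ is immediate. For a fixed $L$, the map $x\mapsto h(L-x)$ is convex, being the composition of the convex function $h$ with an affine map; hence $h_L=h(\cdot)+h(L-\cdot)$ is a sum of two convex functions, and therefore convex, on the set of $x$ for which both $x$ and $L-x$ lie in $(0,M)$, which is exactly the interval $(\max\{L-M,0\},\min\{L,M\})$.

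For $(b)\Rightarrow(c)$, I would fix $A,B,C,D\in(0,M)$ with $A=\max\{A,B,C,D\}$ and $A+D\ge B+C$, and, using the symmetry of \eqref{Eq:InequalityConvexFunctionsbis} in $B$ and $C$, assume $B\ge C$. If $C\le D$, then \eqref{Eq:InequalityConvexFunctionsbis} follows at once from monotonicity, since $h(B)\le h(A)$ and $h(C)\le h(D)$. Otherwise $D<C\le B\le A$, and I set $A'':=B+C-D$, so that $A''\le A$ (from $A+D\ge B+C$) and $A''\ge B$ (from $C\ge D$), hence $D\le C\le B\le A''\le A$. Writing $L:=A''+D=B+C$, which lies in $(0,2M)$, one has $h_L(A'')=h(A'')+h(D)$ and $h_L(B)=h(B)+h(C)$. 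By $(b)$, $h_L$ is convex on its domain, which contains $[L/2,\min\{L,M\})$; being symmetric about $L/2$, it is nondecreasing on $[L/2,\min\{L,M\})$, since for $L/2\le u<v$ there one has $L-v<u<v$ all in the domain and $h_L(L-v)=h_L(v)$, so convexity forces $h_L(u)\le h_L(v)$. As $A''\ge B\ge L/2$, this gives $h_L(A'')\ge h_L(B)$, i.e.\ $h(A'')+h(D)\ge h(B)+h(C)$, and combining with $h(A)\ge h(A'')$ from monotonicity yields \eqref{Eq:InequalityConvexFunctionsbis}. The points $A''$, $B$, and $L-v$ are checked to lie in the domain of $h_L$ using $A,B,C,D\in(0,M)$, $L=B+C<2M$, and $A''\le A<M$; this bookkeeping is routine.

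Finally, for $(c)\Rightarrow(a)$, given $x<y$ in $(0,M)$ I would apply $(c)$ with $A=y$, $D=x$, and $B=C=(x+y)/2$; these satisfy $A=\max$ and $A+D=B+C$, so \eqref{Eq:InequalityConvexFunctionsbis} becomes $2h\big((x+y)/2\big)\le h(x)+h(y)$. Thus $h$ is midpoint-convex, and a measurable (here also monotone, hence locally bounded) midpoint-convex function on an interval is convex, which is $(a)$. The main obstacle throughout is the implication $(b)\Rightarrow(c)$: inequality $(c)$ is a majorization statement about four points, and extracting it from the one-parameter convexity of the functions $h_L$ requires two ideas — first reducing, using only the monotonicity of $h$, to the balanced configuration $A+D=B+C$ where an $h_L$ applies, and then exploiting the symmetry of $h_L$ about $L/2$ to conclude that $h_L$ is monotone to the right of $L/2$, so that the ordering $A''\ge B\ge L/2$ can be used. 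The two remaining implications and the domain checks are routine.
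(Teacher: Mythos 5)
Your proof is correct, but it closes the cycle of implications in the opposite direction from the paper. The paper's route is $(a)\Rightarrow(c)\Rightarrow(b)\Rightarrow(a)$: the hard step is $(a)\Rightarrow(c)$, done directly with supporting lines of $h$ at $B$ and $C$ (after ordering $A\geq B\geq C\geq D$ and using $m_B\geq M_C\geq 0$), while $(c)\Rightarrow(b)$ and $(b)\Rightarrow(a)$ are obtained by testing \eqref{Eq:InequalityConvexFunctionsbis} with $B=C$ to produce midpoint convexity of $h_L$ and of $h$. You instead make $(a)\Rightarrow(b)$ the trivial step, put all the work into $(b)\Rightarrow(c)$ --- first reducing via monotonicity to the balanced configuration with $A''=B+C-D$ so that $D\leq C\leq B\leq A''\leq A$ and $L:=A''+D=B+C$, then using the symmetry of $h_L$ about $L/2$ to deduce that $h_L$ is nondecreasing to the right of $L/2$, whence $h_L(A'')\geq h_L(B)$ --- and finish with $(c)\Rightarrow(a)$ through midpoint convexity. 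Your monotonicity-of-$h_L$ observation is exactly the paper's Remark~\ref{Remark:h_Lincreasing}, which the paper only deploys later in the proof of Proposition~\ref{Prop:EquivalenceK(sqrt)Convex<->Inequality}; in effect you have folded that mechanism into the lemma itself. As for trade-offs: the paper's supporting-line argument is shorter and confines the use of the nondecreasing hypothesis to the single implication $(a)\Rightarrow(c)$ (cf.\ Remark~\ref{Remark:hypothesisNondecreasing}), whereas your route consumes monotonicity inside $(b)\Rightarrow(c)$; on the other hand, you are explicit about the one nontrivial background fact both arguments ultimately need, namely that a measurable (or locally bounded, e.g.\ monotone) midpoint-convex function on an interval is convex, which the paper uses only implicitly, and twice, in $(c)\Rightarrow(b)$ and $(b)\Rightarrow(a)$. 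Your domain bookkeeping (that $B$, $A''$ and $L/2$ lie in $(\max\{L-M,0\},\min\{L,M\})$ because $B,C,D\in(0,M)$, $A''\leq A<M$ and $L=B+C<2M$) checks out.
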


\begin{proof}
	$(a)\Rightarrow (c)$.	Since $B$ and $C$ are interchangeable and $h$ is nondecreasing, we may assume that $A \geq B \geq C \geq D$. Now, let $M_C$ be the maximum slope of the supporting lines of $h$ at $C$, and let $m_B$ be the minimum slope of the supporting lines of $h$ at $B$. By the convexity and monotonicity of $h$, it holds $m_B \geq M_C\geq 0$ and also
	$$
	h(x) \geq h(B) + m_B (x-B) \quad \text{ and } \quad h(x) \geq h(C) + M_C (x-C) 
	$$
	for every $x \in (0,M)$.
	
	Hence, since $A-B \geq C-D\geq 0$, we have
	$$
	h(A)-h(B) \geq m_B(A-B) \geq M_C (C-D) \geq h(C) - h(D)\,.
	$$
	
	$(c) \Rightarrow (b)$. Let $x$, $y\in (\max \{L-M,0\}, \min \{L,M\})$ and assume that $x>y$. By taking $A=x$, $B=C=(x+y)/2$, and $D = y$ in \eqref{Eq:InequalityConvexFunctionsbis}, we get 
	$$
	\dfrac{h(x) + h(y)}{2} \geq h \left( \dfrac{x+y}{2}\right). 
	$$ 
	Similarly, by taking $A= L-y$, $B=C=L -(x+y)/2$, and $D = L-x$ in \eqref{Eq:InequalityConvexFunctions}, we get 
	$$
	\dfrac{h(L-x) + h(L-y)}{2} \geq h \left(L - \dfrac{x+y}{2}\right). 
	$$
	By adding up the previous two inequalities we obtain
	$$
	\dfrac{h_L(x) + h_L(y)}{2} \geq h_L \left( \dfrac{x+y}{2}\right). 
	$$ 
	
	$(b) \Rightarrow (a)$. Let $x_0$, $y_0 \in (0,M)$ and choose $L = x_0 + y_0 \leq 2M$. By $(b)$ we have 
	$$
	\dfrac{h(x) + h(x_0 + y_0-x) + h(y) + h(x_0 + y_0-y)}{2} \geq h \left( \dfrac{x+y}{2}\right) + h \left(x_0 + y_0 - \dfrac{x+y}{2}\right),
	$$
	for every $x$ and $y$ in the interval $(\max \{L-M,0\}, \min \{L,M\})$. By choosing $x=x_0$ and $y=y_0$ we obtain
	$$
	h(x_0) + h(y_0)\geq 2 h \left( \dfrac{x_0+y_0}{2}\right). 
	$$
\end{proof}

\begin{remark}
	\label{Remark:StrictConvexity}
	We can replace convexity by strict convexity in $(a)$ and $(b)$, and then the inequality in \eqref{Eq:InequalityConvexFunctionsbis} is strict unless the sets $\{A,D\}$ and $\{B,C\}$ coincide.
\end{remark}

\begin{remark}
	\label{Remark:h_Lincreasing}
	Note that the function $h_L$ is even with respect to $L/2$. Thus, if it is convex, it is nondecreasing in $(L/2, \min \{L,M\})$.
\end{remark}

\begin{remark}
	\label{Remark:hypothesisNondecreasing}
	The assumption of $h$ being nondecreasing is only used to deduce $(c)$ from $(a)$. It is not required to show the equivalence between $(a)$ and $(b)$, neither to deduce $(a)$ from $(c)$.
\end{remark}

With this result available we can show now Proposition~\ref{Prop:EquivalenceK(sqrt)Convex<->Inequality}

\begin{proof}
	$i) \Rightarrow ii)$ We take $M = +\infty$ and $h(\cdot) = K(\sqrt{\cdot})$ in Lemma~\ref{Lemma:ConvexFunctions}. Since $h$ is strictly convex, the function $h_L$ is strictly convex in $(0,L)$ for every $L> 0$ (recall that we do not need to assume that $h$ is monotone to deduce this, see Remark~\ref{Remark:hypothesisNondecreasing}). Moreover, by Remark~\ref{Remark:h_Lincreasing}, $h_L$ is nondecreasing in $(L/2,L)$. Thus, the function $\phi(\cdot) = h_L(\cdot + L/2)$ is strictly convex in $(-L/2,L/2)$ and nondecreasing in $(0,L/2)$. If we choose $L=2c_1^2$, we have that $\phi((L/2)c_2 \cdot) = g(\cdot)$, where $g$ is defined by \eqref{Eq:DefinitiongFromK}. Therefore, $g$ is strictly convex in $(-1/c_2, 1/c_2)$ and nondecreasing in $(0,1/c_2)$. Thus, the result follows by applying  Lemma~\ref{Lemma:ConvexFunctions} to $g$ in $(0,1/c_2)$ (taking into account Remark~\ref{Remark:StrictConvexity}).

	$ii) \Rightarrow i)$ By Lemma~\ref{Lemma:ConvexFunctions} applied to $g$ we deduce that $g$ is strictly convex and nondecreasing in $(0,1/c_2)$ ---take $C=D$ to see that $g$ is monotone. Thus, since $g$ is even and nondecreasing, $g$ is strictly convex in $(-1/c_2,1/c_2)$ and $\varphi(\cdot) = g(\cdot/(c_1^2 c_2))$ is strictly convex in $(-c_1^2, c_1^2)$. Hence, if we call $h(\cdot) := K(\sqrt{\cdot})$ and $L:= 2c_1^2$, we have that $\varphi(\cdot - c_1^2) = h(\cdot) + h(L-\cdot) =:  h_L(\cdot)$, and thus $h_L$ is strictly convex in $(0,L)$. Note that since $c_1>0$ is arbitrary, $h_L$ is strictly convex in $(0,L)$ for all $L>0$. Therefore, by Lemma~\ref{Lemma:ConvexFunctions}, with  $M = +\infty$, we conclude that $h(\cdot) = K(\sqrt{\cdot})$ is strictly convex in $(0,+\infty)$.
\end{proof}

%%%%%%%%%%%%%%%%%%%%%%%%%%%%%%%%%%%%%%%%%%%%%%%%%%%%%%%%%%%%%%%%%%%%%%%%%%%%%%%
%%%%%%%%%%%%%%%%%%%%%%%%%%%%%%%%%%%%%%%%%%%%%%%%%%%%%%%%%%%%%%%%%%%%%%%%%%%%%%%
\section{An auxiliary computation}
\label{Sec:AuxiliaryResults2}
%%%%%%%%%%%%%%%%%%%%%%%%%%%%%%%%%%%%%%%%%%%%%%%%%%%%%%%%%%%%%%%%%%%%%%%%%%%%%%%
%%%%%%%%%%%%%%%%%%%%%%%%%%%%%%%%%%%%%%%%%%%%%%%%%%%%%%%%%%%%%%%%%%%%%%%%%%%%%%%

In this appendix we present an auxiliary computation that is needed in Section~\ref{Sec:OperatorOddF} in order to complete the proof of Proposition~\ref{Prop:KernelInequalitySufficientCondition}.

\begin{lemma}
	\label{Lemma:ComputationABCD} Let $\alpha$, $\beta$ be two real numbers satisfying $\alpha \geq
	|\beta|$. Let $x=(x',x'')$, $y=(y',y'')\in \ocal \subset \R^{2m}$. Define
	$$
	\begin{array}{cc}
	A = |x'||y'|  \alpha + |x''||y''|\beta \,, \ \ \ \ \ &
	B = |x'||y''| \alpha + |x''||y'| \beta \,, \\
	C = |x''||y'| \alpha + |x'||y''| \beta \,, \ \ \ \ \ &
	D = |x''||y''|\alpha + |x'||y'|  \beta \,.
	\end{array}
	$$
	Then,
	\begin{enumerate}
		\item It holds
		$$
		\begin{cases}
		|A| \geq |B|,\ |A| \geq|C|, \ |A| \geq|D|\,, \\
		|A| + |D| \geq |B| + |C|\,.
		\end{cases}
		$$
		\item If  the sets $\{|A|,|D|\}$ and $\{|B|,|C|\}$ coincide, then necessarily $\alpha = \beta = 0$.
	\end{enumerate}
	
\end{lemma}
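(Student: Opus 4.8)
\emph{Proof proposal.} The plan is to reduce both statements to the three elementary facts $a>b$, $p>q$, $\alpha\geq|\beta|$, where I abbreviate $a:=|x'|$, $b:=|x''|$, $p:=|y'|$, $q:=|y''|$; indeed $x,y\in\ocal$ gives $a>b\geq 0$ and $p>q\geq 0$ (so $a,p>0$), and $\alpha\geq|\beta|$ forces $\alpha\geq 0$. In this notation $A=ap\alpha+bq\beta$, $B=aq\alpha+bp\beta$, $C=bp\alpha+aq\beta$, $D=bq\alpha+ap\beta$. The first step I would carry out is to record the factorizations
$$
A-B=(p-q)(a\alpha-b\beta),\qquad A+B=(p+q)(a\alpha+b\beta),
$$
$$
A-C=(a-b)(p\alpha-q\beta),\qquad A+C=(a+b)(p\alpha+q\beta),
$$
$$
A-D=(ap-bq)(\alpha-\beta),\qquad A+D=(ap+bq)(\alpha+\beta),
$$
$$
B-C=(aq-bp)(\alpha-\beta),\qquad B+C=(aq+bp)(\alpha+\beta).
$$
Every factor occurring here is nonnegative --- a one-line check from $\alpha\geq|\beta|\geq 0$, $a\geq b$, $p\geq q$ (e.g.\ $a\alpha-b\beta\geq a|\beta|-b\beta\geq b|\beta|-b\beta\geq 0$) --- with the single exception of $aq-bp$, which may have either sign. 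From this it is immediate that $A\geq 0$ (hence $|A|=A$) and that $A\geq\pm B$, $A\geq\pm C$, $A\geq\pm D$, which already yields three of the four inequalities of part (1): $|A|\geq|B|$, $|A|\geq|C|$, $|A|\geq|D|$.

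For the remaining inequality $|A|+|D|\geq|B|+|C|$ I would invoke the elementary identity $|B|+|C|=\max\{|B+C|,|B-C|\}$ together with the bounds $|A|+|D|\geq|A+D|$ and $|A|+|D|\geq|A-D|$; thus it suffices to show $|A+D|\geq|B+C|$ and $|A-D|\geq|B-C|$. By the factorizations above and $\alpha\pm\beta\geq 0$, these reduce to $ap+bq\geq aq+bp$ and $ap-bq\geq|aq-bp|$. The former is $(a-b)(p-q)\geq 0$; the latter follows from $(ap-bq)-(aq-bp)=(a+b)(p-q)\geq 0$ together with $(ap-bq)-(bp-aq)=(a-b)(p+q)\geq 0$.

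For part (2), suppose $\{|A|,|D|\}=\{|B|,|C|\}$. Since $|A|$ belongs to the left-hand set, $|A|=|B|$ or $|A|=|C|$; the involution $\alpha\leftrightarrow\beta$ interchanges $A\leftrightarrow D$ and $B\leftrightarrow C$ while preserving all hypotheses, so we may assume $|A|=|B|$. Because $A\geq 0$ and $A\geq\pm B$, the equality $A=|B|$ forces $A=B$ or $A=-B$, i.e.\ (using the factorizations and $p\pm q>0$) $a\alpha=b\beta$ or $a\alpha=-b\beta$; in either case $a\alpha=b|\beta|$, the left side being nonnegative. But $a\alpha=b|\beta|\leq b\alpha$ and $a>b$ force $\alpha=0$, whence $|\beta|\leq\alpha=0$ gives $\beta=0$. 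The case $|A|=|C|$ is identical, using the $A\pm C$ identities with $a>b$ in the role of $p>q$. I do not expect any genuine obstacle: the whole argument is a careful but routine sign analysis, and the only point requiring foresight is writing down the factorizations above, after which every inequality collapses to $a>b$, $p>q$, and $\alpha\geq|\beta|$.
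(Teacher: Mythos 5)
Your proof is correct, and for the two delicate points it takes a route different from the paper's. For the inequality $|A|+|D|\geq|B|+|C|$, the paper works with the three combinations $A+D\pm(B+C)$ and $A-D-B+C$ and then runs through the six essentially distinct sign configurations of $B$, $C$, $D$; you instead use the identity $|B|+|C|=\max\{|B+C|,|B-C|\}$ together with $|A|+|D|\geq\max\{|A+D|,|A-D|\}$, which reduces everything to the two factored comparisons $(ap+bq)-(aq+bp)=(a-b)(p-q)\geq0$ and $ap-bq\geq|aq-bp|$. This eliminates the case analysis entirely and is arguably cleaner. For part (2), the paper uses both equalities $|A|=|B|$ and $|C|=|D|$ (or the symmetric pair) to derive two proportionality relations between $\alpha$ and $\beta$ and combines them; you observe that a single equality $|A|=|B|$ (or $|A|=|C|$) already forces, via $A\geq\pm B$ and the factorizations of $A\pm B$ with $p\pm q>0$, the relation $a\alpha=b|\beta|\leq b\alpha<a\alpha$ unless $\alpha=0$. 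So you in fact prove a slightly stronger statement under a weaker hypothesis, and the conclusion of the lemma follows since the set equality gives $|A|\in\{|B|,|C|\}$.

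One small repair: the symmetry you invoke to reduce $|A|=|C|$ to $|A|=|B|$ is not the right one. Swapping $\alpha\leftrightarrow\beta$ does interchange $A\leftrightarrow D$ and $B\leftrightarrow C$, but it does not preserve the hypothesis $\alpha\geq|\beta|$, and it would turn $|A|=|C|$ into $|D|=|B|$ rather than $|A|=|B|$. The symmetry that does the job is $x\leftrightarrow y$ (i.e.\ $(a,b)\leftrightarrow(p,q)$), which fixes $A$ and $D$, swaps $B$ and $C$, and preserves all hypotheses. Since you anyway close the argument by treating $|A|=|C|$ directly with the $A\pm C$ factorizations and $a>b$ in place of $p>q$, this is only a wording issue, not a gap; just replace the appeal to $\alpha\leftrightarrow\beta$ by the $x\leftrightarrow y$ symmetry or delete it.
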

\begin{proof} The proof is elementary but requires to check some cases. In all of them we will use the following inequalities. Since $\alpha \geq |\beta |$,
	$$
	\alpha\geq 0 \quad \textrm{ and } \quad  -\alpha \leq \beta \leq \alpha\,.
	$$
	Moreover, since $x,y\in\ocal$, it holds
	$$
	|x'|>|x''| \quad \textrm{ and } \quad |y'|>|y''|\,.
	$$

	We start establishing the first statement. We show next that $A\geq 0$ and that
	$$
	A \geq |B|, \ A \geq |C| ,\ A \geq |D|\,.
	$$

	$\bullet$ $A \geq 0$:
	$$
	A =  |x'||y'|  \alpha + |x''||y''|\beta \geq (|x'||y'|  - |x''||y''|)\alpha \geq 0\,.
	$$
	
	$\bullet$ $A \geq |B|$:
	$$
	A\pm B = (|x'|\alpha-|x''|\beta)(|y'|\pm |y''|) \geq 0\,.
	$$
	
	$\bullet$ $A \geq |C|$:
	$$
	A\pm C = (|y'|\alpha-|y''|\beta)(|x'|\pm |x''|)  \geq 0\,.
	$$
	
	$\bullet$ $A \geq |D|$:
	$$
	A\pm D = (|x'||y'| \pm |x''||y''|)(\alpha \pm \beta) \geq 0\,.
	$$

	It remains to show
	$$
	A + |D| \geq |B| + |C|\,.
	$$
	The proof of this fact is just a computation considering all the eight possible configurations of the signs of $B$, $C$, and $D$. Since the roles of $B$ and $C$ are completely interchangeable, we may assume that $B \geq C$ and we only need to check six cases. To do it, note first that
	\begin{equation}
	\label{Eq:LemmaABCDProof1}
	A + D - B - C = (|x'|-|x''|)(|y'|-|y''|)(\alpha + \beta) \geq 0 \,,
	\end{equation}
	\begin{equation}
	\label{Eq:LemmaABCDProof2}
	A - D - B + C = (|x'|+|x''|)(|y'|-|y''|)(\alpha - \beta) \geq 0 \,,
	\end{equation}
	and
	\begin{equation}
	\label{Eq:LemmaABCDProof3}
	A + D + B + C = (|x'|+|x''|)(|y'|+|y''|)(\alpha + \beta) \geq 0 \,,
	\end{equation}
	With these three relations at hand we check the six cases.
	
	$\bullet$ If $B \geq 0$, $C \geq 0$, and $D \geq 0$, then by \eqref{Eq:LemmaABCDProof1} we have
	$$
	A + |D| - |B| - |C| = A + D - B - C \geq 0\,.
	$$
	
	$\bullet$ If $B \geq 0$, $C \geq 0$, and $D \leq 0$, we use the sign of $D$ and \eqref{Eq:LemmaABCDProof1} to see that
	$$
	A + |D| - |B| - |C| = A - D - B - C =  (A + D - B - C) + (-2D) \geq 0\,.
	$$
	
	$\bullet$ If $B \geq 0$, $C \leq 0$, and $D \geq 0$, we use the sign of $D$ and \eqref{Eq:LemmaABCDProof2} to see that
	$$
	A + |D| - |B| - |C| = A + D - B + C =  (A - D - B + C) + 2D \geq 0\,.
	$$
	
	$\bullet$ If $B \geq 0$, $C \leq 0$, and $D \leq 0$, then by \eqref{Eq:LemmaABCDProof2} we have
	$$
	A + |D| - |B| - |C| = A - D - B + C \geq 0\,.
	$$
	
	$\bullet$ If $B \leq 0$, $C \leq 0$, and $D \geq 0$, then by \eqref{Eq:LemmaABCDProof3} we have
	$$
	A + |D| - |B| - |C| = A + D + B + C \geq 0\,.
	$$
	
	$\bullet$ If $B \leq 0$, $C \leq 0$, and $D \leq 0$, we use the sign of $D$ and \eqref{Eq:LemmaABCDProof3} to see that
	$$
	A + |D| - |B| - |C| = A - D + B + C =  (A + D + B + C) + (-2D) \geq 0\,.
	$$
	
	This concludes the proof of the first statement.
	
	We prove now the second point of the lemma. Since the roles of $B$ and $C$ are completely interchangeable, we only need to show the result in the case $|A| = |B|$ and $|C| = |D|$.
	
	Recall that $A \geq 0$. Hence, since $A = |B|$ and $|C| = |D|$, a simple computation shows that
	$$
	\alpha = \sign (B) \dfrac{|x''|}{|x'|}\beta \quad \textrm{ and } \quad
	\beta = \sign (C) \sign(D) \dfrac{|x''|}{|x'|} \alpha \,.
	$$
	Hence, combining both equalities we obtain
	$$
	\alpha = \sign (B) \sign (C) \sign(D) \dfrac{|x''|^2}{|x'|^2} \alpha.
	$$
	Finally, if we assume $\alpha \neq 0$, then necessarily $\sign (B) \sign (C) \sign(D)=1$ and $|x'|= |x''|$, but this is a contradiction with $x\in \ocal$. Therefore, $\alpha = 0$ and thus $\beta =0$.
\end{proof}

%%%%%%%%%%%%%%%%%%%%%%%%%%%%%%%%%%%%%%%%%%%%%%%%%%%%%%%%%%%%%%%%%%%%%%%%%%%%%%%
%%%%%%%%%%%%%%%%%%%%%%%%%%%%%%%%%%%%%%%%%%%%%%%%%%%%%%%%%%%%%%%%%%%%%%%%%%%%%%%
\section{The integro-differential operator $L_K$ in the $(s,t)$ variables}
\label{Sec:stcomputations}
%%%%%%%%%%%%%%%%%%%%%%%%%%%%%%%%%%%%%%%%%%%%%%%%%%%%%%%%%%%%%%%%%%%%%%%%%%%%%%%
%%%%%%%%%%%%%%%%%%%%%%%%%%%%%%%%%%%%%%%%%%%%%%%%%%%%%%%%%%%%%%%%%%%%%%%%%%%%%%%

The goal of this appendix is to take advantage of the doubly radial symmetry of the functions we are dealing with to write equation \eqref{Eq:NonlocalAllenCahn} in $(s,t)$ variables, passing from an equation in $\R^{2m}$ to an equation in $(0,+\infty)\times (0,+\infty)\subset \R^2$. Although we do not use these computations in this paper, we include them here to show the usefulness of having introduced the $\overline{K}$ kernel obtained after integration with respect to the Haar measure on $O(m)^2$. Moreover, the following expressions could be useful for future reference. In the case of the fractional Laplacian, the kernel that we obtain involves essentially an hypergeometric function which is the so-called Appell function $F_2$ (see \cite{Appell} for its definition).

\begin{lemma}
	\label{Lemma:OperatorInSTVariables} Let $m \geq 1$, $\s\in(0,1)$, and let $w\in
	C^\alpha(\R^{2m})$, with $\alpha > 2\s$, be a doubly radial function, i.e., depending only on the variables $s$ and $t$. Let $L_K$ be a rotation invariant operator, that is, $K(z) = K(|z|)$, of the form \eqref{Eq:DefOfLu}. Then, if we define $\tilde{w}:(0,+\infty)\times (0,+\infty) \to \R$ by $\tilde{w}(s,t) = w(s,0,...,0,t,0,...,0)$, it holds
	$$ L_Kw(x) = \tilde{L}_K \tilde{w} (|x'|,|x''|), $$
	with
	\begin{equation*}
	\label{Eq:OperatorInSTVariables}
	\widetilde{L}_K \tilde{w} (s,t) := \int_0^{+\infty}  \int_0^{+\infty} \sigma^{m-1} \tau^{m-1} \big(\tilde{w}(s,t) - \tilde{w}(\sigma, \tau)\big) J(s,t,\sigma, \tau)  \d \sigma\d \tau\,,
	\end{equation*}
	where:
	\begin{enumerate}
		\item If $m= 1$,
		\begin{equation}
		\label{Eq:KernelSTVariables1}
		J(s,t,\sigma, \tau) := \sum_{i=0}^1  \sum_{j =0}^1  K\Big(\sqrt{s^2 + t^2 + \sigma^2 + \tau^2 -2 s \sigma (-1)^i -2 t \tau (-1)^j}\Big)\,.
		\end{equation}
		
		\item If $m\geq 2$,
		\begin{align}
		J(s,t,\sigma, \tau) &:= c_m ^2  \int_{-1}^1  \int_{-1}^1  (1-\theta^2)^{\frac{m-2}{2}} (1-\overline{\theta}^2)^{\frac{m-2}{2}} \nonumber\\
		& \quad \quad \quad \quad \quad
		K\Big(\sqrt{s^2 + t^2 + \sigma^2 + \tau^2 -2 s \sigma \theta -2 t \tau \overline{\theta}}\Big) \d \theta \d \overline{\theta}\,, \label{Eq:KernelSTVariables2}
		\end{align}
		with
		$$
		c_m = \dfrac{2 \pi^{\frac{m-1}{2}}}{\Gamma (\frac{m-1}{2})}.
		$$
	\end{enumerate}
\end{lemma}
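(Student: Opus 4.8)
The plan is to compute $L_Kw(x)$ directly from the definition \eqref{Eq:DefOfLu} by introducing polar coordinates adapted to the double-revolution structure. Since $L_K$ is rotation invariant and $w$ is doubly radial, $L_Kw$ is again doubly radial (this is the computation at the start of Section~\ref{Sec:OperatorOddF}), so it suffices to evaluate it at a point of the special form $x=(se,te)$ with $e$ the first vector of the canonical basis of $\R^m$, $s=|x'|$, $t=|x''|$. Writing $y=(y',y'')\in\R^m\times\R^m$ in spherical coordinates, $y'=\sigma\omega$, $y''=\tau\widetilde{\omega}$ with $\sigma=|y'|\ge0$, $\tau=|y''|\ge0$ and $\omega,\widetilde{\omega}\in\Sph^{m-1}$, one has $\d y=\sigma^{m-1}\tau^{m-1}\,\d\sigma\,\d\tau\,\d\omega\,\d\widetilde{\omega}$, while $w(y)=\widetilde{w}(\sigma,\tau)$ by double radiality and
\[
|x-y|^2=s^2+\sigma^2-2s\sigma\,(e\cdot\omega)+t^2+\tau^2-2t\tau\,(e\cdot\widetilde{\omega}).
\]
Substituting into \eqref{Eq:DefOfLu} and integrating first over $\omega$ and $\widetilde{\omega}$ (the factor $\widetilde{w}(s,t)-\widetilde{w}(\sigma,\tau)$ and the Jacobian $\sigma^{m-1}\tau^{m-1}$ being independent of $\omega,\widetilde{\omega}$) isolates exactly the stated double integral, with
\[
J(s,t,\sigma,\tau)=\int_{\Sph^{m-1}}\int_{\Sph^{m-1}}K\!\left(\sqrt{\,s^2+\sigma^2-2s\sigma\,\omega_1+t^2+\tau^2-2t\tau\,\widetilde{\omega}_1\,}\right)\d\omega\,\d\widetilde{\omega},
\]
where $\omega_1=e\cdot\omega$ and $\widetilde{\omega}_1=e\cdot\widetilde{\omega}$; this is the kernel already recorded in \eqref{Eq:L_K-st}.

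It then remains to evaluate these spherical integrals, which depend on $\omega$ and $\widetilde{\omega}$ only through $\omega_1$ and $\widetilde{\omega}_1$. For $m=1$ the sphere $\Sph^0$ is the two-point set $\{-1,+1\}$ with counting measure, so each integral collapses to a sum over $\omega_1,\widetilde{\omega}_1\in\{1,-1\}$; writing these signs as $(-1)^i$ and $(-1)^j$ gives \eqref{Eq:KernelSTVariables1}. For $m\ge2$ I would invoke the classical slicing formula for the surface measure on $\Sph^{m-1}$,
\[
\int_{\Sph^{m-1}}g(\omega_1)\,\d\omega=c_m\int_{-1}^{1}g(\theta)\,(1-\theta^2)^{\frac{m-2}{2}}\,\d\theta,
\]
with $c_m$ the constant in the statement; applying it to both sphere integrals produces the factor $c_m^2$, the weights $(1-\theta^2)^{\frac{m-2}{2}}(1-\overline{\theta}^{2})^{\frac{m-2}{2}}$ and the argument of $K$ expressed in terms of $\theta=\omega_1$ and $\overline{\theta}=\widetilde{\omega}_1$, that is, \eqref{Eq:KernelSTVariables2}.

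The genuine content beyond these elementary manipulations is the treatment of the principal value. I would first check that $\widetilde{L}_K\widetilde{w}(s,t)$ is well defined: since $w\in C^\alpha(\R^{2m})$ with $\alpha>2\s$ and $K$ satisfies the ellipticity assumption \eqref{Eq:Ellipticity}, the part of the double integral away from the diagonal $\{(\sigma,\tau)=(s,t)\}$ is absolutely convergent, and near the diagonal the bound $|\widetilde{w}(s,t)-\widetilde{w}(\sigma,\tau)|\le C|(s-\sigma,t-\tau)|^\alpha$ together with the evenness of $K$ ---which lets one symmetrize the singular part--- makes the principal value exist. Then, working with the truncated integrals $\int_{\{|x-y|>\varepsilon\}}$ in \eqref{Eq:DefOfLu}, where the change to polar coordinates is a genuine substitution, and letting $\varepsilon\to0^+$, one recovers $\widetilde{L}_K\widetilde{w}(|x'|,|x''|)$; dominated convergence handles the non-singular part and the cancellation estimate handles the diagonal. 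The set $\{\sigma\tau=0\}$ is $\sigma^{m-1}\tau^{m-1}$-null and plays no role, and the boundary cases $|x'||x''|=0$ present no additional singularity, so the identity holds for every doubly radial $x$. I expect this principal-value bookkeeping to be the only genuinely delicate point; the polar change of variables and the two sphere-slicing computations are routine.
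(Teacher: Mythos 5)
Your strategy is the same as the paper's: reduce to the evaluation point $x=(se,te)$ (you via the doubly radial invariance of $L_Kw$, the paper via a direct check that the spherical kernel is independent of the directions $x_s,x_t$ --- the two are equivalent), pass to spherical coordinates in each factor of $\R^{2m}=\R^m\times\R^m$, identify $J$ as the double sphere integral already recorded in \eqref{Eq:L_K-st}, and then reduce each sphere integral to a one-dimensional integral in $\omega_1$. Your explicit treatment of the principal value is actually more careful than the paper's, which is silent on it; that part is fine.

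However, the slicing step you invoke is not the classical formula. The disintegration of the surface measure of $\Sph^{m-1}$ with respect to the coordinate $\omega_1$ reads $\int_{\Sph^{m-1}}g(\omega_1)\,\d\omega=c_m\int_{-1}^{1}g(\theta)\,(1-\theta^2)^{\frac{m-3}{2}}\,\d\theta$: the $(m-2)$-dimensional measure of the slice, $c_m(1-\theta^2)^{\frac{m-2}{2}}$, must be multiplied by the coarea factor $(1-\theta^2)^{-1/2}$. The identity you wrote, with weight $(1-\theta^2)^{\frac{m-2}{2}}$, is false: already for $g\equiv 1$ and $m=2$ it yields $c_2\cdot 2=4$ instead of $|\Sph^{1}|=2\pi$, and for $m=3$ it produces a nonconstant weight where the correct one is constant. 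Carried out honestly, your computation therefore gives $J$ with exponents $\frac{m-3}{2}$ rather than $\frac{m-2}{2}$, so it does not establish \eqref{Eq:KernelSTVariables2} as displayed. For what it is worth, the paper's own proof makes the same omission --- it writes $\int_{\Sph^{m-1}}\d\omega$ as $\int_{-1}^{1}\d\omega_1\int_{\partial B_{\rho(\omega_1)}}$ without the Jacobian $1/\rho(\omega_1)$ --- so the discrepancy really lies in the stated kernel, which should carry the weights $(1-\theta^2)^{\frac{m-3}{2}}(1-\overline{\theta}^2)^{\frac{m-3}{2}}$ (with the same $c_m=|\Sph^{m-2}|$); but as a justification of the displayed formula, your appeal to the ``classical slicing formula'' is a genuine gap, since that formula does not say what you claim.
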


\begin{proof}
	Let $x = (s x_s, t x_t)$ with $x_s$, $x_t$ $\in \Sph^{m-1}$ and $y = (\sigma y_\sigma, \tau
	y_\tau)$ with $y_\sigma$, $y_\tau$ $\in \Sph^{m-1}$. Then, decomposing $\R^{2m} = \R^m \times \R^m$
	and using spherical coordinates in each $\R^m$ we obtain
	\begin{align*}
	L_Ku(x) &= \int_{\R^{2m}} \big( u(x) - u(y)\big) K( |x-y|) \d y &\\
	&= \int_0^{+\infty}  \int_0^{+\infty} \sigma^{m-1} \tau^{m-1} \big(u(s,t) - u(\sigma, \tau)\big)  \\
	&\quad \quad \quad \quad  \bpar{\int_{\Sph^{m-1}}  \int_{\Sph^{m-1}} K \Big( \sqrt{|sx_s - \sigma y_\sigma|^2 + |t x_t - \tau y_\tau|^2 } \Big) \d y_\sigma \d y_\tau } \d \sigma \d \tau .
	\end{align*}
	Now, we define the kernel
	\begin{equation}
	\label{Eq:KernelSTVariablesProof1}
	J(x_s, x_t, s,t,\sigma, \tau) := \int_{\Sph^{m-1}}  \int_{\Sph^{m-1}} K \Big( \sqrt{|sx_s - \sigma y_\sigma|^2 + |t x_t - \tau y_\tau|^2 }\Big ) \d y_\sigma \d y_\tau \,.
	\end{equation}
	
	First of all, it is easy to see that $J$ does not depend on $x_s$ nor $x_t$. Indeed, consider a different point $(z_s, z_t)\in \Sph^{m-1} \times \Sph^{m-1}$ and let $M_s$ and $M_t$ be two orthogonal transformations such that $M_s(x_s) = z_s$ and $M_t(x_t) = z_t$. Then, making the change of variables $y_\sigma = M_s(\tilde{y}_\sigma)$ and $y_\tau = M_t(\tilde{y}_\tau)$, and using that $M_s( \Sph^{m-1}) = M_t(\Sph^{m-1}) = \Sph^{m-1}$, we find out that
	\begin{align*}
	& \hspace{-1cm} J(z_s, z_t, s,t,\sigma, \tau) = \\
	&= \int_{\Sph^{m-1}}  \int_{\Sph^{m-1}} K \Big( \sqrt{|s M_s(x_s) - \sigma y_\sigma|^2 + |t M_t(x_t) - \tau y_\tau|^2 }\Big) \d y_\sigma \d y_\tau \\
	&= \int_{\Sph^{m-1}}  \int_{\Sph^{m-1}} K \Big( \sqrt{|s M_s(x_s) - \sigma M_s(\tilde{y}_\sigma)|^2 + |t M_t(x_t) - \tau M_t(\tilde{y}_\tau)|^2 }\Big) \d \tilde{y}_\sigma \d \tilde{y}_\tau \\
	&= \int_{\Sph^{m-1}}  \int_{\Sph^{m-1}} K\Big ( \sqrt{|M_s(sx_s - \sigma \tilde{y}_\sigma)|^2 + |M_t(t x_t - \tau \tilde{y}_\tau)|^2 }\Big) \d \tilde{y}_\sigma \d \tilde{y}_\tau \\
	&= \int_{\Sph^{m-1}}  \int_{\Sph^{m-1}} K\Big ( \sqrt{|sx_s - \sigma \tilde{y}_\sigma|^2 + |t x_t - \tau \tilde{y}_\tau|^2 }\Big) \d \tilde{y}_\sigma \d \tilde{y}_\tau \\
	&= J(x_s, x_t, s,t,\sigma, \tau) \,.
	\end{align*}
	
	Therefore, we can replace $x_s$ and $x_t$ in \eqref{Eq:KernelSTVariablesProof1} by $e =(1,0,\ldots,0) \in \Sph^{m-1}$. Thus, we have
	\begin{equation*}
	J(s,t,\sigma, \tau) := \int_{\Sph^{m-1}}  \int_{\Sph^{m-1}} K\Big( \sqrt{|s e - \sigma y_\sigma|^2 + |t e - \tau y_\tau|^2 }\Big) \d y_\sigma \d y_\tau \,.
	\end{equation*}
	For an easier notation, we rename $\omega = y_\sigma$ and $\tilde\omega = y_\tau$, and thus we have
	\begin{align*}
	|s e - \sigma y_\sigma|^2 + |t e - \tau y_\tau|^2 &= |s e - \sigma \omega|^2 + |t e - \tau \tilde\omega|^2\\
	&= s^2 +\sigma^2 - 2 s \sigma e \cdot \omega + t^2 + \tau^2 - 2 t \tau e\cdot \tilde\omega \\
	&= s^2 +\sigma^2 - 2 s \sigma \omega_1 + t^2 + \tau^2 - 2t \tau\tilde\omega_1\,.
	\end{align*}
	Then, we can rewrite $J$ as
	\begin{equation*}
	\label{Eq:KernelSTVariablesProof3}
	J(s,t,\sigma, \tau) := \int_{\Sph^{m-1}}  \int_{\Sph^{m-1}} K\Big( \sqrt{s^2+\sigma^2- 2 s \sigma \omega_1 + t^2 + \tau^2 - 2t \tau\tilde\omega_1}\Big) \d \omega \d \tilde\omega \,.
	\end{equation*}
	At this point we have to distinguish the cases $m=1$ and $m\geq 2$. For the first one, since
	$\Sph^{0} = \{-1,1\}$ we directly obtain \eqref{Eq:KernelSTVariables1}. For the second one,
	since the integrand only depends on $\omega_1$ and $\tilde\omega_1$, defining $\rho(\cdot) = \sqrt{1-|\cdot|^2}$ we proceed as follows
	\begin{align*}
	\label{Eq:KernelSTVariablesProof4}
	J(s,t,\sigma, \tau) &= \int_{\Sph^{m-1}}  \int_{\Sph^{m-1}} K\Big( \sqrt{s^2+\sigma^2- 2 s \sigma \omega_1 + t^2 + \tau^2 - 2t \tau\tilde\omega_1}\Big) \d \omega \d \tilde\omega \,\\
	&= \int_{-1}^1 \d \omega_1 \int_{\partial B_{\rho(\omega_1)}} \d \omega_2\cdot\cdot\cdot\d \omega_m \int_{-1}^1 \d \tilde\omega_1 \int_{\partial B_{\rho(\tilde\omega_1)}} \d \tilde\omega_2\cdot\cdot\cdot\d \tilde\omega_m  \\
	& \quad \quad \quad \quad \quad K\Big( \sqrt{s^2+\sigma^2- 2 s \sigma \omega_1 + t^2 + \tau^2 - 2t \tau\tilde\omega_1}\Big) \, \\
	&= \int_{-1}^1 \int_{-1}^1  |\partial B_{\rho(\omega_1)}| |\partial B_{\rho(\tilde\omega_1)}|\,\\
	& \quad \quad \quad \quad \quad K\Big( \sqrt{s^2+\sigma^2- 2 s \sigma \omega_1 + t^2 + \tau^2 - 2t \tau\tilde\omega_1}\Big) \d \omega_1 \d \tilde\omega_1 .
	\end{align*}
	Finally, we obtain \eqref{Eq:KernelSTVariables2} once we replace
	$|\partial B_{\rho(\cdot)}|=c_m\,\rho(\cdot)^{m-2} = c_m (1-|\cdot|^2)^\frac{m-2}{2}$, where $c_m$ is the measure of the boundary of the unitary ball in $\R^{m-1}$.
\end{proof}

If the operator $L_K$ is the fractional Laplacian, the previous expression of the kernel $J$ can be rewritten in terms of a hypergeometric function of two variables, the so-called Appell function $F_2$ (see \cite{Appell}). This expression does not simplify any of the arguments of this paper. Nevertheless, we think that it is worthy to point out the relation between $J$ and $F_2$, since the known properties of the last one could provide some information about the kernel $J$.

\begin{lemma}
	\label{Lemma:Appell} Let $F_2$ be the Appell hypergeometric function defined in \cite{Appell}. If $L_K = (-\Delta)^\s$ and $m\geq 2$, then
	\begin{equation}
	\label{Eq:Appell}
	J(s,t,\sigma,\tau) = \frac{c_{2m,\s}\pi^m\Gamma\left(\frac{m}{2}\right)^2}{\Gamma\left(\frac{m-1}{2}\right)^2\Gamma\left(\frac{m+1}{2}\right)^2} \frac{F_2\left( m+\s;\frac{m}{2},m;\frac{m}{2},m;\frac{4s\sigma}{(s+\sigma)^2+(t+\tau)^2},\frac{4t\tau}{(s+\sigma)^2+(t+\tau)^2} \right)}{[(s+\sigma)^2+(t+\tau)^2]^{m+\s}}.
	\end{equation}
\end{lemma}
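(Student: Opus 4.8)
The plan is to substitute the explicit kernel of the fractional Laplacian into formula \eqref{Eq:KernelSTVariables2} of Lemma~\ref{Lemma:OperatorInSTVariables} and to recognize the resulting double integral as an Euler-type integral representation of $F_2$. Since $(-\Delta)^\s$ corresponds to $K(\rho)=c_{2m,\s}\,\rho^{-2m-2\s}$, for $m\ge 2$ one has
\[
J(s,t,\sigma,\tau)=c_m^2\,c_{2m,\s}\int_{-1}^1\int_{-1}^1\frac{(1-\theta^2)^{\frac{m-2}{2}}\,(1-\overline{\theta}^2)^{\frac{m-2}{2}}}{\big(s^2+t^2+\sigma^2+\tau^2-2s\sigma\theta-2t\tau\overline{\theta}\big)^{m+\s}}\,\d\theta\,\d\overline{\theta}.
\]
First I would perform the affine change of variables $\theta=2u-1$, $\overline{\theta}=2v-1$, which maps $[-1,1]^2$ onto $[0,1]^2$, replaces each factor $(1-\theta^2)^{\frac{m-2}{2}}\,\d\theta$ by $2^{\,m-1}\big(u(1-u)\big)^{\frac{m-2}{2}}\,\d u$, and—after completing the square—turns the denominator into $D\,(1-xu-yv)$, where
\[
D:=(s+\sigma)^2+(t+\tau)^2,\qquad x:=\frac{4s\sigma}{D},\qquad y:=\frac{4t\tau}{D}.
\]
Note that $x+y=1-\dfrac{(s-\sigma)^2+(t-\tau)^2}{D}\le 1$, with equality only on the diagonal $\{(s,t)=(\sigma,\tau)\}$; off this diagonal $(x,y)$ belongs to the convergence region of $F_2$ and of its integral representation, which is precisely the region where the integral defining $J$ converges (the diagonal being the hypersingular set of $(-\Delta)^\s$).

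The second step is to compare
\[
J=c_m^2\,c_{2m,\s}\,4^{\,m-1}\,D^{-(m+\s)}\int_0^1\!\!\int_0^1\frac{u^{\frac m2-1}(1-u)^{\frac m2-1}v^{\frac m2-1}(1-v)^{\frac m2-1}}{(1-xu-yv)^{m+\s}}\,\d u\,\d v
\]
with the Euler integral representation of the Appell function (see \cite{Appell}),
\[
\int_0^1\!\!\int_0^1\frac{u^{\beta-1}(1-u)^{\gamma-\beta-1}v^{\beta'-1}(1-v)^{\gamma'-\beta'-1}}{(1-xu-yv)^{\alpha}}\,\d u\,\d v=\frac{\Gamma(\beta)\Gamma(\gamma-\beta)\Gamma(\beta')\Gamma(\gamma'-\beta')}{\Gamma(\gamma)\Gamma(\gamma')}\,F_2(\alpha;\beta,\gamma;\beta',\gamma';x,y),
\]
valid for $\Re\gamma>\Re\beta>0$ and $\Re\gamma'>\Re\beta'>0$. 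Matching exponents forces $\alpha=m+\s$, $\beta=\beta'=\tfrac m2$, $\gamma=\gamma'=m$ (and $\gamma>\beta>0$ indeed holds since $m\ge2$), so the double integral equals $\tfrac{\Gamma(m/2)^4}{\Gamma(m)^2}F_2\big(m+\s;\tfrac m2,m;\tfrac m2,m;x,y\big)$. Substituting this back, using $c_m^2=\tfrac{4\pi^{m-1}}{\Gamma(\frac{m-1}{2})^2}$, and simplifying the $\Gamma$-factors via Legendre's duplication formula $\Gamma(m)=2^{\,m-1}\pi^{-1/2}\Gamma(\tfrac m2)\Gamma(\tfrac{m+1}{2})$ then produces the prefactor in \eqref{Eq:Appell}.

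The whole argument is routine once Lemma~\ref{Lemma:OperatorInSTVariables} is available. The only steps deserving genuine care are: (i) checking that $(x,y)$ lies in the domain where the $F_2$ integral representation is valid—handled above, its boundary being exactly the singular diagonal of $J$; and (ii) the bookkeeping of the powers of $2$ and of the $\Gamma$-values when identifying the normalizing constant. I expect (ii) to be the only place where a stray factor might sneak in, but it is not a conceptual obstacle.
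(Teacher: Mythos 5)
Your route is the same as the paper's: substitute $K(\rho)=c_{2m,\s}\,\rho^{-2m-2\s}$ into \eqref{Eq:KernelSTVariables2}, pass to $[0,1]^2$ via $\theta=2u-1$, $\overline\theta=2v-1$, recognize the Euler integral representation of $F_2$ with $\alpha=m+\s$, $\beta=\beta'=m/2$, $\gamma=\gamma'=m$, and finish with Legendre duplication. Your remark that $x+y=1-\big[(s-\sigma)^2+(t-\tau)^2\big]/D\le 1$, so that $(x,y)$ lies in the validity region of the representation away from the diagonal, is a point the paper does not even record.

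The one genuine issue is exactly the item you defer as ``bookkeeping''. Your factor $4^{m-1}=2^{2m-2}$ is the correct one (it is $2^{m-2}$ from each weight $(1-\theta^2)^{\frac{m-2}{2}}=2^{m-2}\big(u(1-u)\big)^{\frac{m-2}{2}}$ times the Jacobian $\d\theta\,\d\overline\theta=4\,\d u\,\d v$), but carrying it through gives
\begin{equation*}
c_{2m,\s}\,c_m^2\,4^{m-1}\,\frac{\Gamma\!\left(\tfrac m2\right)^4}{\Gamma(m)^2}
=\frac{4\,c_{2m,\s}\,\pi^m\,\Gamma\!\left(\tfrac m2\right)^2}{\Gamma\!\left(\tfrac{m-1}2\right)^2\Gamma\!\left(\tfrac{m+1}2\right)^2},
\end{equation*}
which is four times the prefactor in \eqref{Eq:Appell}, not equal to it. The paper's own proof records $2^{2m-4}$ at the corresponding step (it drops the Jacobian factor $4$), and the constant printed in \eqref{Eq:Appell} is the one consistent with that $2^{2m-4}$. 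So your closing claim that the simplification ``produces the prefactor in \eqref{Eq:Appell}'' is not substantiated: starting from \eqref{Eq:KernelSTVariables2} as stated, your computation establishes \eqref{Eq:Appell} only up to this factor $4$, and in fact indicates that the printed constant should carry it. You should either do the $\Gamma$-arithmetic explicitly and flag the discrepancy with the statement, or locate a lost factor (there is none on your side); leaving it at ``a stray factor might sneak in'' is precisely where the proof, as a proof of the statement as printed, is incomplete.
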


\begin{proof}
	If we take $K(z) = c_{2m,\s}|z|^{-2m-2\s}$ in \eqref{Eq:KernelSTVariables2} we get
	\begin{align*}
	J(s,t,\sigma, \tau) = c_{2m,\s} c_m ^2  \int_{-1}^1  \int_{-1}^1  \frac{(1-\theta^2)^{\frac{m-2}{2}} (1-\overline{\theta}^2)^{\frac{m-2}{2}}}{(s^2 + t^2 + \sigma^2 + \tau^2 -2 s \sigma \theta -2 t \tau \overline{\theta})^{m+\s}} \d \theta \d \overline{\theta}\,.
	\end{align*}
	Then, if we make the change of variables $\theta = 2\varpi_1-1$ and $\overline{\theta}=2\varpi_2-1$
	we arrive at
	\begin{align*}
	J(s,t,\sigma, \tau) &= \frac{ c_{2m,\s} 2^{2m-4} c_m^2}{[(s+\sigma)^2+(t+\tau)^2]^{m+\s}} \cdot \\
	& \quad \quad \quad \quad  \int_0^1 \int_0^1
	\frac{ \varpi_1^\frac{m-2}{2} (1-\varpi_1)^\frac{m-2}{2} \varpi_2^\frac{m-2}{2}
		(1-\varpi_2)^\frac{m-2}{2}}{\left(1-\frac{4s\sigma}{(s+\sigma)^2+(t+\tau)^2}\,\varpi_1-\frac{4t\tau}{(s+\sigma)^2+(t+\tau)^2}\,\varpi_2
		\right)^{m+\s}} \d \varpi_1 \d \varpi_2 \\
	&= \frac{ c_{2m,\s} 2^{2m-4} c_m^2}{[(s+\sigma)^2+(t+\tau)^2]^{m+\s}} \frac{\Gamma\left(\frac{m}{2} \right)^4}{\Gamma(m)^2} \cdot \\
	& \quad \quad \quad \quad F_2\left( m+\s;\frac{m}{2},m;\frac{m}{2},m;\frac{4s\sigma}{(s+\sigma)^2+(t+\tau)^2},\frac{4t\tau}{(s+\sigma)^2+(t+\tau)^2} \right).
	\end{align*}
	We finally obtain \eqref{Eq:Appell} by using the duplication formula for the $\Gamma$-function.
\end{proof}

To conclude the appendix, we rewrite the kernel inequality \eqref{Eq:KernelInequality} in $(s,t)$ variables and in terms of the kernel $J$. We do not present a proof of this result since it is identical to the one of Proposition~\ref{Prop:KernelInequalitySufficientCondition} but changing the notation.

\begin{lemma}
	\label{Lemma:KernelInequalityCone} Let $m\geq 1$ and let $J$ the kernel defined in
	\eqref{Eq:KernelSTVariables1}-\eqref{Eq:KernelSTVariables2} with $K(\sqrt{\cdot})$ strictly convex. Then, if $s>t$ and $\sigma > \tau$, we have
	\begin{equation*}
	J(s,t,\sigma, \tau) > J(s,t,\tau, \sigma)\,.
	\end{equation*}
\end{lemma}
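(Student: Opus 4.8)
The plan is to transcribe, essentially line for line, the proof of Proposition~\ref{Prop:KernelInequalitySufficientCondition}, with the average over $O(m)^2$ replaced by the integration in the variables $(\theta,\overline{\theta})\in(-1,1)^2$ that appears in \eqref{Eq:KernelSTVariables2} (respectively by the sum over the four corners $(\theta,\overline{\theta})\in\{-1,1\}^2$ in \eqref{Eq:KernelSTVariables1} when $m=1$). Throughout I would write $P:=s^2+t^2+\sigma^2+\tau^2$, $W(\theta,\overline{\theta}):=(1-\theta^2)^{\frac{m-2}{2}}(1-\overline{\theta}^2)^{\frac{m-2}{2}}$, and
$$
g(r):=K\big(\sqrt{P+2r}\big)+K\big(\sqrt{P-2r}\big),\qquad r\in(-P/2,P/2),
$$
which is even in $r$ and, by the assumption that $\tau\mapsto K(\sqrt{\tau})$ is strictly convex, falls under Proposition~\ref{Prop:EquivalenceK(sqrt)Convex<->Inequality} (with $c_1=\sqrt{P}$, $c_2=2/P$).

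For $m\ge2$: since $W$ is invariant under the two involutions $(\theta,\overline{\theta})\mapsto(-\theta,-\overline{\theta})$ and $(\theta,\overline{\theta})\mapsto(\overline{\theta},\theta)$, I would split $(-1,1)^2$ up to measure zero into the four cones $q_1:=\{\theta>|\overline{\theta}|\}$, $q_2:=\{\overline{\theta}>|\theta|\}$, $q_3:=\{\theta<-|\overline{\theta}|\}$, $q_4:=\{\overline{\theta}<-|\theta|\}$ (the analogues of the sets $Q_i$ in \eqref{Eq:DefQ}) and fold $q_2,q_3,q_4$ back onto $q_1$ using those involutions. This gives
\begin{align*}
c_m^{-2}\,J(s,t,\sigma,\tau)&=\int_{q_1}W(\theta,\overline{\theta})\,\big[\,g(s\sigma\theta+t\tau\overline{\theta})+g(s\sigma\overline{\theta}+t\tau\theta)\,\big]\,\d\theta\,\d\overline{\theta},\\
c_m^{-2}\,J(s,t,\tau,\sigma)&=\int_{q_1}W(\theta,\overline{\theta})\,\big[\,g(s\tau\theta+t\sigma\overline{\theta})+g(s\tau\overline{\theta}+t\sigma\theta)\,\big]\,\d\theta\,\d\overline{\theta},
\end{align*}
so that it is enough to prove, for every $\alpha,\beta\in[-1,1]$ with $\alpha>|\beta|$, the pointwise inequality $g(s\sigma\alpha+t\tau\beta)+g(s\sigma\beta+t\tau\alpha)>g(s\tau\alpha+t\sigma\beta)+g(s\tau\beta+t\sigma\alpha)$.

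Setting $A:=s\sigma\alpha+t\tau\beta$, $B:=s\tau\alpha+t\sigma\beta$, $C:=t\sigma\alpha+s\tau\beta$, $D:=t\tau\alpha+s\sigma\beta$ and using that $g$ is even, this reads $g(|A|)+g(|D|)>g(|B|)+g(|C|)$, which is exactly \eqref{Eq:InequalityIntegrandKernelInequalityProof3} under the dictionary $|x'|\leftrightarrow s$, $|x''|\leftrightarrow t$, $|y'|\leftrightarrow\sigma$, $|y''|\leftrightarrow\tau$, the hypothesis $x,y\in\ocal$ becoming $s>t$ and $\sigma>\tau$. I would then invoke point (1) of Lemma~\ref{Lemma:ComputationABCD} (whose elementary computation is valid verbatim with $s,t,\sigma,\tau$ in place of the four moduli) to get $|A|=\max\{|A|,|B|,|C|,|D|\}$ and $|A|+|D|\ge|B|+|C|$, apply Proposition~\ref{Prop:EquivalenceK(sqrt)Convex<->Inequality} for the non-strict inequality, and rule out equality: by that proposition equality would force $\{|A|,|D|\}=\{|B|,|C|\}$, hence $\alpha=\beta=0$ by point (2) of Lemma~\ref{Lemma:ComputationABCD}, contradicting $\alpha>|\beta|\ge0$. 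For $m=1$ the partition into $q_1,\dots,q_4$ degenerates (each corner sits on $|\theta|=|\overline{\theta}|$), so I would instead group the four summands of \eqref{Eq:KernelSTVariables1} directly in the two pairs exchanged by $(\theta,\overline{\theta})\mapsto(-\theta,-\overline{\theta})$, obtaining $J(s,t,\sigma,\tau)=g(s\sigma+t\tau)+g(|s\sigma-t\tau|)$ and $J(s,t,\tau,\sigma)=g(s\tau+t\sigma)+g(|s\tau-t\sigma|)$, and conclude by adding the inequality $g(|A|)+g(|D|)\ge g(|B|)+g(|C|)$ evaluated at $(\alpha,\beta)=(1,1)$ and at $(\alpha,\beta)=(1,-1)$; both are strict because $\alpha=1\neq0$.

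The argument is thus essentially a relabelling of one already carried out in Section~\ref{Sec:OperatorOddF}, and I do not expect a genuine obstacle; the two points that need a little care are verifying that the Jacobian weight $W$ is preserved by the two involutions used to fold $q_2,q_3,q_4$ onto $q_1$ (immediate from $W$ being symmetric in its two arguments and even in each), and handling the $m=1$ bookkeeping where the four-region decomposition collapses. No new analytic input beyond Proposition~\ref{Prop:EquivalenceK(sqrt)Convex<->Inequality} and Lemma~\ref{Lemma:ComputationABCD} is required.
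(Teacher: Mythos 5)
Your proposal is correct and is exactly what the paper intends: the paper omits the proof of Lemma~\ref{Lemma:KernelInequalityCone}, stating only that it is identical to that of Proposition~\ref{Prop:KernelInequalitySufficientCondition} up to notation, and your folding of $(-1,1)^2$ onto the cone $\{\theta>|\overline\theta|\}$ mirrors the decomposition of $O(m)^2$ into the sets $Q_i$ of \eqref{Eq:DefQ}, with the pointwise inequality reduced to Proposition~\ref{Prop:EquivalenceK(sqrt)Convex<->Inequality} and Lemma~\ref{Lemma:ComputationABCD} under the dictionary $|x'|\leftrightarrow s$, $|x''|\leftrightarrow t$, $|y'|\leftrightarrow\sigma$, $|y''|\leftrightarrow\tau$. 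Your separate treatment of $m=1$, where the four-cone partition degenerates and strictness follows from point (2) of Lemma~\ref{Lemma:ComputationABCD} since $\alpha=1\neq0$, is a legitimate and careful completion of the same argument.
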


%%%%%%%%%%%%%%%%%%%%%%%%%%%%%%%%%%%%%%%%%%%%%%%%%%%%%%%%%%%%%%%%%%%%%%%%%%%%%%%
%%%%%%%%%%%%%%%%%%%%%%%%%%%%%%%%%%%%%%%%%%%%%%%%%%%%%%%%%%%%%%%%%%%%%%%%%%%%%%%
\section*{Acknowledgements}
%%%%%%%%%%%%%%%%%%%%%%%%%%%%%%%%%%%%%%%%%%%%%%%%%%%%%%%%%%%%%%%%%%%%%%%%%%%%%%%
%%%%%%%%%%%%%%%%%%%%%%%%%%%%%%%%%%%%%%%%%%%%%%%%%%%%%%%%%%%%%%%%%%%%%%%%%%%%%%%
The authors thank Xavier Cabré for his guidance and useful discussions on the topic of this paper.

%%%%%%%%%%%%%%%%%%%%%%%%%%%%%%%%%%%%%%%%%%%%%%%%%%%%%%%%%%%%%%%%%%%%%%%%%%%%%%%
%%%%%%%%%%%%%%%%%%%%%%%%%%%%%%%%%%%%%%%%%%%%%%%%%%%%%%%%%%%%%%%%%%%%%%%%%%%%%%%
\bibliographystyle{amsplain}
\bibliography{biblio}
%%%%%%%%%%%%%%%%%%%%%%%%%%%%%%%%%%%%%%%%%%%%%%%%%%%%%%%%%%%%%%%%%%%%%%%%%%%%%%%
%%%%%%%%%%%%%%%%%%%%%%%%%%%%%%%%%%%%%%%%%%%%%%%%%%%%%%%%%%%%%%%%%%%%%%%%%%%%%%%

\end{document}